\documentclass[11pt]{article}

\usepackage{amsthm}
\usepackage{hyperref}
\usepackage{amssymb}
\usepackage{latexsym}
\usepackage{amsmath}
\usepackage{amsfonts}
\usepackage{mathrsfs} 
\usepackage{version}
\usepackage[dvips]{graphicx}
\usepackage[LGR,T1]{fontenc}%
\usepackage{url}
\usepackage{enumerate}
\usepackage{color}
\usepackage{bm, bbm}
\usepackage{textgreek}

\usepackage{pgf,tikz}
\usetikzlibrary{arrows}
\usetikzlibrary{automata, chains}
\usetikzlibrary{positioning}

\pagenumbering{arabic}

\tikzset{%
  every picture/.style={thick,shorten >=1pt,>=stealth'},
  state/.append style={%
    state without output,draw=blue!50,very thick,fill=blue!20,%
    inner sep=0pt,minimum size=4.5ex}
}

\numberwithin{equation}{section}
\theoremstyle{plain}
\newtheorem{thm}{Theorem}
\numberwithin{thm}{section}
\newtheorem{prop}{Proposition}
\numberwithin{prop}{section}
\newtheorem{lm}{Lemma}
\numberwithin{lm}{section}

\numberwithin{term}{section}

\numberwithin{claim}{section}
\newtheorem{cor}{Corollary} 
\numberwithin{cor}{section}

\numberwithin{problem}{section}

 
\newtheorem{assump}{Assumption}
\numberwithin{assump}{section}

\newtheorem{defn}{Definition} 
\numberwithin{defn}{section}

\newtheorem{question}{Question}

\numberwithin{conjecture}{section}

\numberwithin{condition}{section}
\theoremstyle{remark} 
\newtheorem{remark}{Remark}
\numberwithin{remark}{section}

\newcommand{\Proj}{\mathrm{Proj}}
\newcommand{\Dom}{\mathrm{Dom}}

\newcommand{\N}{\mathbb{N}}
\newcommand{\bD}{\mathbbm{D}}
\newcommand{\bK}{\mathbbm{K}}
\newcommand{\bQ}{\mathbb{Q}}

\newcommand{\Lip}{\mathrm{Lip}}

\newcommand{\Law}{\mathrm{Law}}

\newcommand{\ud}{\,\mathrm{d}}
\newcommand{\pa}{\mathrm{pa}}

\newcommand{\EE}{\mathbb{E}}

\newcommand{\PP}{\mathbb{P}}
\newcommand{\FF}{\mathbb{F}}
\newcommand{\RR}{\mathbb{R}}
\newcommand{\mcC}{\mathcal{C}}
\newcommand{\mcF}{\mathcal{F}}

\newcommand{\mcI}{\mathcal{I}}

\newcommand{\mcM}{\mathcal{M}}

\newcommand{\mcP}{\mathcal{P}}
\newcommand{\mcS}{\mathcal{S}}

\newcommand{\mc}[1]{\mathcal{#1}}

\newcommand{\tila}{\tilde{a}}
\newcommand{\tilX}{\widetilde{X}}
\newcommand{\tilY}{\widetilde{Y}}

\newcommand{\tilb}{\widetilde{b}}

\newcommand{\tilpartial}{\tilde{\partial}}

\numberwithin{equation}{section}


\setlength{\baselineskip}{22pt}

\setcounter{tocdepth}{3}

\setlength{\topmargin}{-1cm} \setlength{\voffset}{-0.04cm}
\setlength{\hoffset}{-0.4cm}
\setlength{\textheight}{650pt}
\setlength{\headheight}{1cm} \setlength{\headsep}{0.5cm}
\setlength{\textwidth}{16.5cm}
\setlength{\evensidemargin}{10pt} \setlength{\oddsidemargin}{10pt}

\title{Smoothness of Directed Chain Stochastic Differential Equations}
\author{ Tomoyuki Ichiba \thanks{Department of Statistics and Applied Probability, South Hall, University of California, Santa Barbara, CA 93106, USA (E-mail: \href{mailto:ichiba@pstat.ucsb.edu}{ichiba@pstat.ucsb.edu}). Research was supported in part by National Science Foundation NSF DMS-2008427.}
\and Ming Min \thanks{Department of Statistics and Applied Probability, South Hall, University of California, Santa Barbara, CA 93106, USA (E-mail: \href{mailto:m_min@pstat.ucsb.edu}{m\_min@pstat.ucsb.edu}).}
}

\begin{document}
\maketitle

\begin{abstract}
     We study the smoothness of the solution of the directed chain stochastic differential equations, where each process is affected by its neighborhood process in an infinite directed chain graph, introduced by Detering et al. (2020). Because of the auxiliary process in the chain-like structure, classic methods of Malliavin derivatives are not directly applicable. Namely, we cannot make a connection between the Malliavin derivative and the first order derivative of the state process. It turns out that the partial Malliavin derivatives can be used here to fix this problem. 
\end{abstract}

\section{Introduction} 
The main objective of this paper is to study the existence and regularity of the densities of the directed chain stochastic differential equations: given a filtered probability space $(\Omega, \mcF, (\mcF_t)_{t\ge 0}, \PP)$, the directed chain McKean-Vlasov stochastic differential equation (or directed chain SDE for short) for a pair $(X_\cdot^\theta, \tilX_\cdot)$ of $N$-dimensional stochastic processes considered here is of the form
\begin{equation}
    \begin{split} \label{eq:1.1}
	& X_t^\theta = \theta + \int_0^t V_0(s, X_s^\theta, \Law(X_s^\theta), \tilX_{s}) \ud s + \sum_{i=1}^d \int_0^t V_i(s, X_s^\theta, \Law(X_s^\theta), \tilX_{s}) \ud B_s^i, 
	\end{split}
\end{equation}
for $t \ge 0$ with the distributional constraint 
$$
[ X_t^\theta, t \ge 0 ] := \Law(X_t^\theta, t \ge 0 ) = \Law(\tilX_t, t \ge 0 ) =: [\tilX_t, t \ge 0 ], 
$$
where $V_i$, $i = 1, \ldots , d$ are some smooth coefficients, $B_\cdot:=(B_\cdot^1, \dots, B_\cdot^d)$ is a standard $d$-dimensional Brownian motion independent of the initial state $X_0^\theta = \theta$ and of $\tilX_\cdot$, and $X_0^\theta$ is independent of $\tilX_0$. 
Throughout the paper, $[\xi]$ denotes the law of a generic random element $\xi$. Here each coefficient $V_i$ in \eqref{eq:1.1} depends on time $s$, the value $X_s^\theta$, its law $\Law(X_s^\theta) =: [X_s^\theta]$ and the other $\tilX_s$ of the pair for $s\ge 0$. The law $[X^\theta_\cdot]$ depends on the law $[\tilX_\cdot]$ through \eqref{eq:1.1} and they are the same marginal law. We show that the above directed chain SDE has a unique weak solution in section \ref{sec:directed_sde}.

This kind of directed chain structure was firstly proposed by \cite{DETERING20202519} in a simpler form. Schematically, it can be written as an infinite chain of stochastic equations for $(X_{1,\cdot}, X_{2,\cdot}, \ldots ) $:  
\begin{equation}
    \begin{split}
	\ud X_{1,t} &= b(t, X_{1,t}, F_{1,t}) \ud t  + \ud B_{1,t}, \label{def:sde_X} \\
	\ud {X}_{2,t} &= b(t, {X}_{2,t}, {F}_{2,t}) \ud t  + \ud {B}_{2,t}\\
	& \vdots  \\ 
	\ud {X}_{i,t} &= b(t, {X}_{i,t}, {F}_{i,t}) \ud t  + \ud {B}_{i,t}, \\
	& \vdots 
\end{split}
\end{equation}
where $F_{i,t} := u \delta_{X_{i+1,t}} + (1-u) \mu_{i,t}$ is the mixture distribution term of the measure-dependent drift coefficient $b$ with the marginal law $\mu_{i,t}:= \mathrm{Law}(X_{i,t})$ of $\,X_{i,t}\,$ for $\,t \ge 0 \,$, $\delta_{X_{i+1,t}}$ is the Dirac measure at ${X}_{i+1,t}$, a fixed constant $u\in [0,1]$ measures the common amount of dependency of $X_{i,\cdot}$ on its neighborhood value $X_{i+1,\cdot}$, and $B_{1,\cdot}, {B}_{2,\cdot, \ldots }$ are independent standard Brownian motions. 
We assume also that the initial value $X_{i,0}$ is independent of $B_{i,\cdot} $, and $\,X_{i+1,\cdot}\,$ and $\, {B}_{i,\cdot}\,$ are independent for $i = 1, 2, \ldots $.  In particular, the drift $b$ in \cite{DETERING20202519}  has the following form
$b(t, x, \mu) := \int_\RR \tilb(t, x, y) \mu (\!\ud y)$ 
with some Lipschitz continuous function $\widetilde{b}$. See also Figure \ref{fig:directedchain} in section \ref{sec:smooth}. 

The stochastic processes on infinite graphs including the directed chain structure have drawn many attentions recently.  Stochastic Differential Games on the directed chain have been studied in \cite{yichen1} and on the extended version of random directed networks in \cite{yichen2} as well as on the general random graph (e.g., \cite{lacker2020case}). \cite{lacker2021locally} discuss the Markov random field property over both finite and countably infinite graph with local interactions through the drift coefficients. Another related topic is the Graphon particle system. There are a sequence of works in Graphon Mean Field Games,  \cite{bayraktar2020graphon, caines2018graphon, caines2019graphon} just to name a few. \cite{bayraktar2021graphon} introduced the uniform-in-time exponential concentration bounds related to the graphon particle system and its finite particle approximations. Here, we are interested in the existence and smoothness of the density of directed chain SDE  \eqref{def:sde_random_coef}-\eqref{def:sde_random_coef_constrain}. It should be emphasized here that in this problem, we need notions of derivatives in the space of measures, which is used frequently in the theory of Mean Field Games. 

In most cases, Malliavin calculus is a foundation to analyze the smoothness of the density of stochastic differential equations. It has been widely used in investigating the density of diffusions 
\cite{Kusuoka1984ApplicationsOT}, \cite{KUSUOKA1984271}, \cite{Kusuoka1985ApplicationsOT} and then applied into many different scenarios. The authors in \cite{CASS20091416} use Malliavin calculus to derive smoothing properties of solution to stochastic differential equations with jumps. The smoothing properties of McKean-Vlasov SDEs have been studied in \cite{crisan2018smoothing}, which is closely related to our purpose. However, because of the appearance of the auxiliary process $\tilX$, the crucial step making connections between the Malliavin derivative and the first order derivative of the state process fails, please see Question \ref{Q:integral_by_parts} for the detail. {To our best knowledge, we did not find any works studying the smoothness property of such weak solutions of stochastic differential equations.}

For the purpose of resolving this problem and utilizing the  Malliavin derivatives, we should frozen the auxiliary process $\tilX$. This inspired us to consider another closely related, well-developed tool, partial Malliavin calculus. Partial Malliavin calculus is first introduced by \cite{Kusuoka1984partial} for the constant case, where the projections are taken on a fixed Hilbert subspace, and applied to prove some regularity results in Non-linear Filtering theory. 
Another work developing the partial Malliavin calculus is \cite{ikeda1985malliavin}, by which the authors were able to complete the proof of some results in \cite{malliavin1982certaines} on the long-time asymptotic of the stochastic oscillatory integrals. We mainly adopt the framework from a later work by Nualart and Zakai \cite{nualart1989partial}, where the projection is taken on a family of the subspace which is defined as the orthogonal complement to the subspaces generated by $\tilX$ in \eqref{eq:1.1}. {We remark that our method is potentially applied to analyze the smoothness property of weak solutions of stochastic differential equations in a general setting.}

This paper is structured as follows: In section \ref{sec:directed_sde}, we first introduce the differentiation in the space of measures and multi-index notation in section \ref{sec:Notations and Basic Setup}, and then prove the existence, uniqueness and some regularity results on the solutions of generalized directed chain SDEs in Propositions \ref{prop:sol of directed chain sde}-\ref{prop:regularity_Xtheta}. In section \ref{sec:partialMalliavin}, we prepare the notions of the partial Malliavin calculus and give the Kusuoka-Stroock process for the proof of our smoothness of densities, which will be stated in section \ref{sec:smooth}. Our proofs follows the idea in \cite{crisan2018smoothing}, where we first derive integral by parts formulae for the directed chain SDEs via the partial Malliavin derivatives, instead of the Malliavin derivatives, as in \cite{crisan2018smoothing}. The main result is stated in Theorem \ref{thm:smooth_density} with some applications in section \ref{sec:smooth}. 


\section{Preliminaries and Directed Chain SDEs}
\label{sec:directed_sde}
In this section, we first prepare some notations and the notion of differentiation in $\mc{P}_2$, where $\mc{P}_2$ is the space of all measures with finite second moments, and then establish the weak solutions of directed chain SDEs. 

\subsection{Notations and Basic Setup}
\label{sec:Notations and Basic Setup}
To be consistent with the reference \cite{crisan2018smoothing}, we use $[\xi]$ to denote the law of a random variable $\xi$. Rather than the directed chain SDE of the type given in \cite{DETERING20202519}, we consider the SDE in a more general setup, allowing the diffusion coefficients non-constant. Given a probability space $(\Omega, \mcF, \FF = (\mcF_t)_{t\ge 0}, \PP)$, the directed chain McKean-Vlasov SDE (or directed chain SDE for short) is of the form
\begin{align}
\label{def:sde_random_coef}
	& X_t^\theta = \theta + \int_0^t V_0(s, X_s^\theta, [X_s^\theta], \tilX_{s}) \ud s + \sum_{i=1}^d \int_0^t V_i(s, X_s^\theta, [X_s^\theta], \tilX_{s}) \ud B_s^i, \\
	\label{def:sde_random_coef_constrain}
	& \text{with the constraint }\quad [X_t^\theta, t \ge 0 ] = [\tilX_t, t \ge 0 ], 
\end{align}
where $B_s:=(B_s^1, \dots, B_s^d)$ is a standard $d$ dimensional Brownian motion and $\tilX_{s}\in L^2(\Omega\times [0,T], \RR^N)$ is an adapted random process independent of all the Brownian motions $B^i, i = 1,\dots,d$ and initial state $\theta$.

Moreover, we assume that $V_0, V_i: [0,T]\times\RR^N \times \mcP_2(\RR^N) \times \RR^N \to \RR^N$, where $\mcP_2(\RR^N)$ is the set of measures on $\RR^N$ with finite second moments. We equip $\mcP_2(\RR^N)$ with the $2$-Wasserstein metric, $W_2$. For a general metric space $(M, d)$, we define the $2$-Wasserstein metric on $\mcP_2(M)$ by
$$W_2(\mu, \nu) = \inf_{\Pi\in \mcP_{\mu, \nu}} \bigg( \int_{M\times M} d(x, y)^2 \Pi(\ud x, \ud y) \bigg)^{1/2},$$
where $\mcP_{\mu, \nu}$ denotes the class of measures on $M\times M$ with marginals $\mu$ and $\nu$. 

We denote $L^p$ norm on $(\Omega, \mcF, \PP)$ by $\|\cdot\|_p$, $p \ge 1$ and for every $t \ge 0$, we also introduce the space $\mcS_t^p$ of continuous $\FF$ adapted process $\varphi$ on $[0,t]$, satisfying 
$$\|\varphi\|_{\mcS_t^p} = \big(\EE \sup_{s\in[0,t]} |\varphi_s|^p\big)^{1/p}<\infty.$$

Let us introduce more notations in accordance with \cite{crisan2018smoothing}. We will write $\theta=\delta_x$ if the initial state of this SDE is a fixed real vector $x \in \mathbb R^N$. We use $\mcC^{k,k,k}_{b, \Lip}(\RR^+\times\RR^N\times \mcP_2(\RR^N) \times \RR^N ; \RR^N)$ for the class of functions that are $k$-times continuously differentiable with bounded Lipschitz derivatives in the the last three variables, where the notion of derivatives with respect to measure is adopted from P.-L. Lions' lecture notes at the \textit{Coll\`{e}ge de France}, recorded in a set of notes \cite{cardaliaguet2012notes}, very well exposed in \cite{carmona2015} and also adopted by \cite{crisan2018smoothing}. A precise definition for $\mcC^{k,k,k}_{b, \Lip}(\RR^+\times\RR^N\times \mcP_2(\RR^N) \times \RR^N ; \RR^N)$ will be given in Definition \ref{def:Ckkk}.

\paragraph{Differentiability in $\mcP_2(\RR^N)$.}
Lion's notion of differentiability with respect to measure of functions $U:\mcP_2(\RR^N)\to \RR$ is to define a lifted function $U'$ on the Hilbert space $L^2(\Omega'; \RR^N)$ over probability space $(\Omega', \mcF',\PP')$, where $\Omega'$ is a Polish space and $\PP'$ is an atomless measure, such that $U'(X') = U([X'])$ for $X' \in L^2(\Omega'; \RR^N)$ and $[X'] = [X]$. Thus, we are able to express the derivative of $U$ w.r.t. measure $\mu = [X]$ term as the Fr\'{e}chet derivative of $U'$ w.r.t. $X'$ whenever it exists, which can be written as an element of $L^2(\Omega'; \RR^N)$ by identifying $L^2(\Omega'; \RR^N)$ and its dual. This gradient in a direction $\gamma' \in L^2(\Omega'; \RR^N)$ is given by
$$\mc{D}U'(X')(\gamma') = \langle \mc{D}U'(X'),  \gamma' \rangle = \EE' \big[ \mc{D} U'(X') \cdot \gamma' \big],$$ 
where $\EE'$ is the expectation under $\PP'$. By \cite[Theorem 6.2]{cardaliaguet2012notes}, the distribution of this gradient depends only on the measure $\mu$, exists uniquely and can be written as 
$$\partial_\mu U(\mu, X'):=\mc{D}U'(X') = \xi(X') \in L^2(\Omega'; \RR^N).$$

This definition of the derivative with respect to measure can be extended to higher orders by thinking of $\partial_\mu U(\mu, \cdot): \mcP_2(\RR^N)\times\RR^N \to \RR^N$ as a function, and the derivative is well defined for each of its components as in the following. For each $\mu\in\mcP_2(\RR^N)$, there exists a unique version of such function $\partial_\mu U(\mu, \cdot)$ which is assumed to be a priori continuous (see the discussion in \cite{crisan2018smoothing}).

\paragraph{Multi-index.} To get a more general result, we extend the derivatives to higher order. For a function $f:\mcP_2(\RR^N) \to \RR^N$, we can apply the above discussion straightforwardly to each component $f=(f^1,\dots, f^N)$. Then the derivatives $\partial_\mu f^i, 1\le i\le N$ takes values in $\RR^N$, and  we denote $(\partial_\mu f^i)_j : \mcP_2(\RR^N)\times\RR^N \to \RR$ for $j=1,\dots, N$. For a fixed $v\in\RR^N$, we are able to differentiate $\mcP_2\ni \mu \mapsto (\partial_\mu f^i)_j(\mu, v) \in\RR$ again to get the second order derivative. If the derivative of this mapping exists and there is a continuous version of 
$$\mcP_2(\RR^N) \times \RR^N \times \RR^N \ni (\mu, v_1, v_2) \mapsto \partial_\mu(\partial_\mu f^i)_j(\mu, v_1, v_2) \in \RR^N,$$
then it is unique. It is natural to have a multi-index notation $\partial_\mu^{(j, k)}f^i:= (\partial_\mu(\partial_\mu f^i)_j)_k$ to ease the notation. Similarly, for higher derivatives, if for each $(i_0, \dots, i_n)\in\{1, \dots, N\}^{n+1}$,
$$\underbrace{\partial_\mu(\partial_\mu \dots (\partial_\mu}_{n \text{ times}} f^{i_0})_{i_1} \dots)_{i_n}$$
exists, we denote this $\partial_\mu^\alpha f^{i_0}$ with $\alpha=(i_1, \dots, i_n)$ and $|\alpha|=n$. Each derivative in $\mu$ is a function of an extra variable with $\partial_\mu^\alpha f^{i_0}: \mcP_2(\RR^N) \times (\RR^N)^n\to \RR$. We always denote these variables, by $v_1, \dots, v_n$, i.e.,  
$$ \mcP_2(\RR^N)\times (\RR^N)^n  \ni (\mu, v_1, \dots, v_n) \mapsto \partial_\mu^\alpha f^{i_0}(\mu, v_1, \dots, v_n) \in \RR.$$ 
When there is no confusion, we will abbreviate $(v_1, \dots, v_n)$ to $\bm{v}\in (\RR^N)^n$, so that 
$$\partial_\mu^\alpha f^{i_0}(\mu, \bm{v}) =\partial_\mu^\alpha f^{i_0}(\mu, v_1, \dots, v_n),$$
and use notation 
$$|\bm{v}| := |v_1| +\cdots + |v_n|,$$
with $|\cdot|$ the Euclidean norm on $\RR^N$. It then makes sense to discuss derivatives of the function $\partial^\alpha_\mu f^{i_0}$ with respect to variables $v_1, \dots, v_n$. 

If, for some $j\in \{1, \dots, N\}$ and all $(\mu, v_1, \dots, v_{j-1}, v_{j+1}, \dots, v_n) \in \mcP_2(\RR^N) \times (\RR^N)^{n-1}$, 
$$\RR^N \ni v_j \mapsto \partial_\mu^\alpha f^{i_0}(\mu, v_1, \dots, v_n) \in \RR$$
is $l$-times continuously differentiable, we denote the derivatives $\partial_{v_j}^{\beta_j}\partial_\mu^\alpha f^{i_0}$, for $\beta_j$ a multi-index on $\{1, \dots, N\}$ with $|\beta_j|\le l$. Similar to the above, we will denote by $\bm{\beta}$ the $n$-tuple of multi-indices $(\beta_1, \dots, \beta_n)$. We also associate a length to $\bm{\beta}$ by 
$$|\bm{\beta}| := |\beta_1| +\cdots + |\beta_n|,$$
and denote $\#\bm{\beta}:=n$. Then we denote by $\mc{B}_n$ the collection of all such $\bm{\beta}$ with $\#\bm{\beta} = n$, and $\mc{B} := \cup_{n\ge 1} \mc{B}_n$. Again, to lighten the notation, we use 
$$\partial_{\bm{v}}^{\bm{\beta}} \partial_\mu^\alpha f^i(\mu, \bm{v}) := \partial_{v_n}^{\beta_n}\cdots  \partial_{v_1}^{\beta_1} \partial_\mu^\alpha f^i(\mu, v_1,\dots, v_n).$$ 

The coefficients $V_0, \dots, V_d : [0,T]\times\RR^N \times\mcP_2(\RR^N) \times \RR^N \to \RR^N$ depend on a time variable, two Euclidean variables as well as the measure variable. So whether the order of taking derivatives matters is a question. Fortunately, a result from \cite[Lemma 4.1]{rainer2017} tells us that derivatives commute when the mixed derivatives are Lipschitz continuous. However, it should be emphasized that we could not interchange the order of $\partial_\mu$ and $\partial_v$, since the coefficients would not depend on the extra variable $\bm{v}$ before taking derivatives with respect to measure.

\begin{defn}[$\mcC^{k,k,k}_{b, \Lip}$] \label{def:Ckkk}
We have the following definitions:
\begin{enumerate}
\item[(a)] We use $\partial_x, \tilpartial$ to denote the derivative with respect to the second and fourth Euclidean variables in $V_0, V_i$'s, respectively.

\item[(b)] Let $V:\RR^+\times\RR^N\times \mcP_2(\RR^N) \times \RR^N \to \RR^N$ with components $V^1, \dots, V^N: \RR^+\times\RR^N\times \mcP_2(\RR^N) \times \RR^N \to \RR$. We say $V\in \mcC^{1,1,1}_{b, \Lip}([0,T]\times\RR^N\times \mcP_2(\RR^N) \times \RR^N ; \RR^N)$ if the following is true: for each $i=1,\dots, N$, $\partial_\mu V^i$, $\partial_x V^i$ and $\tilpartial V^i$ exist. Moreover, assume the boundedness of the derivatives for all $(t, x, \mu, y, v)\in [0,T]\times\RR^N \times \mcP_2(\RR^N) \times \RR^N \times \RR^N$,
$$|\partial_x V^i(t, x, \mu, y)| + |\tilpartial V^i(t, x, \mu, y)| + |\partial_\mu V^i(t, x, \mu, y, v)| \le C.$$
In addition, suppose that $\partial_\mu V^i$, $\partial_x V^i$ and $\tilpartial V^i$ are all Lipschitz in the sense that for all $(t, x, \mu, y, v), (t, x', \mu', y', v') \in [0,T]\times\RR^N\times \mcP_2(\RR^N) \times \RR^N $,
\begin{align*}
\big|\partial_\mu V^i(t, x, \mu, y, v) -\partial_\mu V^i(t, x', \mu', y', v') \big| &\le C(|x-x'| + |y-y'| + |v-v'| + W_2(\mu, \mu')), \\
\big|\partial_x V^i(t, x, \mu, y) -\partial_x V^i(t, x', \mu', y') \big| &\le C(|x-x'| + |y-y'| + W_2(\mu, \mu')), \\
\big|\tilpartial V^i(t, x, \mu, y) -\tilpartial V^i(t, x', \mu', y') \big| &\le C(|x-x'| + |y-y'| + W_2(\mu, \mu')),
\end{align*}
and $V^i$, $\partial_\mu V^i$, $\partial_x V^i$ and $\tilpartial V^i$ all have linear growth property,
\begin{align*}
&|V^i(t, x, \mu, y)| + |\partial_x V^i(t, x, \mu, y)| +|\partial_\mu V^i(t, x, \mu, y, v)|+ |\tilpartial V^i(t, x, \mu, y)| \\
&\hspace{-1.5em} \le C_T\big(1 + |x| + |y| + W_2(\mu, \mu_0) + |v|\big)
\end{align*}
for some fixed measure $\mu_0\in\mcP_2(\RR^N)$, and $C_T$ is a constant that depends only on $T$.

\item[(c)] We write $V\in \mcC^{k,k,k}_{b, \Lip}([0,T]\times\RR^N\times \mcP_2(\RR^N) \times \RR^N ; \RR^N)$, if the following holds true: for each $i=1, \dots, N$, and all multi-indices $\alpha$, $\tilde{\gamma}$ and $\gamma$ on $\{1, \dots, N\}$ and all $\bm{\beta}\in\mc{B}$ satisfying $|\alpha| +|\bm{\beta}|+|\gamma|+ |\tilde{\gamma}| \le k$, the derivative 
$$\partial_x^\gamma \tilpartial^{\tilde{\gamma}} \partial_{\bm{v}}^{\bm{\beta}} \partial_\mu^\alpha V^i(t, x, \mu, y, \bm{v})$$
exists and is bounded, Lipschitz continuous, and satisfies the linear growth condition. 

\item[(d)] We write $h \in \mcC^{k,k}_{b, \Lip}([0,T]\times\RR^N \times \RR^N ; \RR^N)$, if the mapping $h$ does not depend on a measure variable and all the other conditions are satisfied in (c).
\end{enumerate}
\end{defn}

\subsection{Solutions of Directed Chain SDEs}

The existence and uniqueness of weak solutions of directed chain SDEs are given in Proposition \ref{prop:sol of directed chain sde}. The constraint  \eqref{def:sde_random_coef_constrain} plays an essential role here to govern the uniqueness.

\begin{prop}
\label{prop:sol of directed chain sde}
Suppose that $V_i, i=0,1,\dots, d$ are Lipschitz in the sense that for every $T>0$, there exists a constant $C_T$ such that 
\begin{equation}
\label{def:lip}
\sup_{i} |V_i(t, x_1, \mu_1, y_1) - V_i(t, x_2, \mu_2, y_2)|\le C_T(|x_1-x_2| + |y_1-y_2| + W_2(\mu_1, \mu_2)), \quad 0\le t\le T.
\end{equation}
With the same constant $C_T$, let us also assume that $V_i$'s have at most linear growth, i.e.
\begin{equation}
	\sup_{0\le t\le T} |V_i(t, x, \mu, y)| \le C_T(1 + |x| + |y| + W_2(\mu, \mu_0))
\end{equation}
where $\mu_0\in\mcP_2(\RR^N)$ is fixed. Then there exists a unique weak solution to the directed chain stochastic differential equation \eqref{def:sde_random_coef}-\eqref{def:sde_random_coef_constrain}.
\end{prop}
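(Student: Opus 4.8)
The plan is to recast \eqref{def:sde_random_coef}--\eqref{def:sde_random_coef_constrain} as a fixed-point problem for the \emph{law of the auxiliary process on path space}, and to solve it by a contraction argument. Write $\mathbb{W}_{2,t}$ for the Wasserstein-$2$ distance on $\mcP_2(C([0,t];\RR^N))$ associated with the supremum norm, so that $\mathbb{W}_{2,t}(m,m')^2 = \inf \EE \sup_{s\le t}|\eta_s-\eta'_s|^2$, the infimum being over all couplings $(\eta,\eta')$ with $[\eta]=m$ and $[\eta']=m'$. Given a candidate law $m$ on path space with finite second moment, I construct on some probability space an auxiliary process $\tilX$ with $[\tilX_\cdot]=m$, independent of an initial datum $\theta\in L^2$ and of a $d$-dimensional Brownian motion $B$; I then substitute the time-$s$ marginal $m_s$ for the measure argument and consider the Itô equation $X_t=\theta+\int_0^t V_0(s,X_s,m_s,\tilX_s)\,\ud s+\sum_{i=1}^d\int_0^t V_i(s,X_s,m_s,\tilX_s)\,\ud B_s^i$, setting $\Phi(m):=[X_\cdot]$. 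A fixed point $m=\Phi(m)$ yields a weak solution: the constraint $[X_\cdot]=[\tilX_\cdot]$ holds by construction, and at the fixed point $[X_s]=m_s$, so the frozen marginal coincides with the true law of the solution and the substitution is self-consistent. Conversely, any weak solution induces such a fixed point, so uniqueness of the solution law reduces to uniqueness of the fixed point of $\Phi$.

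First I would check that $\Phi$ is well defined and maps $\mcP_2(C([0,T];\RR^N))$ into itself. For fixed $m$ the coefficients $s\mapsto V_i(s,\cdot,m_s,\tilX_s)$ are $\FF$-adapted, Lipschitz in the state variable uniformly in the remaining arguments by \eqref{def:lip}, and of linear growth, so the equation above is a standard SDE with random coefficients and admits a unique strong solution $X\in\mcS_T^2$ by the classical Picard iteration. The linear growth assumption, together with the Burkholder--Davis--Gundy inequality and Gronwall's lemma, gives the a priori bound $\|X\|_{\mcS_T^2}^2\le C_T(1+\EE|\theta|^2+\EE\sup_{s\le T}|\tilX_s|^2)<\infty$, so $\Phi(m)$ indeed has finite second moment on path space.

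Next I would establish the contraction estimate. Given $m,m'$, I take an optimal coupling realizing $\mathbb{W}_{2,t}(m,m')$, build $\tilX,\tilX'$ with laws $m,m'$ driven by the same $\theta$ and $B$, and let $X,X'$ be the corresponding solutions, so that $(X,X')$ is a coupling of $\Phi(m)$ and $\Phi(m')$. Using \eqref{def:lip}, the elementary bound $W_2(m_s,m'_s)^2\le\EE|\tilX_s-\tilX'_s|^2$, and Doob's and BDG inequalities, I obtain $\EE\sup_{r\le s}|X_r-X'_r|^2\le C\int_0^s\EE\sup_{r\le u}|X_r-X'_r|^2\,\ud u+ C\int_0^s \EE\sup_{r\le u}|\tilX_r-\tilX'_r|^2\,\ud u$, and Gronwall yields $\mathbb{W}_{2,t}(\Phi(m),\Phi(m'))^2\le C't\,\mathbb{W}_{2,t}(m,m')^2$. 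Iterating along the Picard scheme produces the factorial gain $\mathbb{W}_{2,T}(\Phi^n(m),\Phi^n(m'))^2\le \frac{(C'T)^n}{n!}\,\mathbb{W}_{2,T}(m,m')^2$, so some iterate $\Phi^n$ is a strict contraction on $\mcP_2(C([0,T];\RR^N))$ for every fixed $T$; by the generalized Banach fixed-point theorem $\Phi$ has a unique fixed point, which is the unique weak solution law.

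The step I expect to be most delicate is this contraction estimate, precisely because the drift and diffusion depend on the \emph{value} $\tilX_s$ of the auxiliary process and not merely on its marginal law: this forces the coupling and the metric to live on path space rather than on one-dimensional marginals, which is what distinguishes the argument from the standard McKean--Vlasov well-posedness proof. Care is also needed to keep the two measure-type dependencies separate --- the genuine mean-field term $[X_s^\theta]$, frozen to $m_s$ and contributing the $W_2(m_s,m'_s)$ discrepancy, and the pathwise input $\tilX_s$, contributing the $\EE\sup|\tilX-\tilX'|^2$ discrepancy --- and to verify that at the fixed point these coincide so that the original coupled system is genuinely solved. Finally, since $\tilX$ can only be prescribed through its law, the solution is obtained on a probability space of our own choosing and uniqueness is uniqueness in law, which explains the \emph{weak} formulation of the statement.
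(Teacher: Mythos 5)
Your construction is essentially the paper's: both proofs freeze the path-space law $m$ of the auxiliary process, solve the resulting It\^o equation with adapted random coefficients, and look for a fixed point of $\Phi(m)=[X^m_\cdot]$ in a Wasserstein-type metric on $\mcP_2(C([0,T];\RR^N))$, the one-step estimate coming from coupling $(\tilX,\tilX')$, the Lipschitz hypothesis, BDG and Gronwall. The only structural difference is cosmetic: you use the untruncated path-space distance and control second moments through the linear growth assumption, whereas the paper truncates the Lipschitz bound and the metric at $1$ and removes the truncation at the end by subdividing $[0,T]$.

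There is, however, a genuine gap at the iteration step. Your one-step estimate is $\mathbb{W}_{2,t}(\Phi(m),\Phi(m'))^2\le C't\,\mathbb{W}_{2,t}(m,m')^2$, and iterating an inequality that relates the \emph{same-time} quantities can only give the geometric bound $\mathbb{W}_{2,T}(\Phi^n(m),\Phi^n(m'))^2\le (C'T)^n\,\mathbb{W}_{2,T}(m,m')^2$, not the factorial bound $\frac{(C'T)^n}{n!}\,\mathbb{W}_{2,T}(m,m')^2$ you claim. A factorial gain would require the integrated form $\mathbb{W}_{2,t}(\Phi(m),\Phi(m'))^2\le C'\int_0^t \mathbb{W}_{2,s}(m,m')^2\,\ud s$, and this is precisely what your coupling cannot deliver: because the coefficients depend on the \emph{realization} $\tilX_s$, the comparison must be run under one fixed coupling of $(\tilX,\tilX')$, chosen optimal for $\mathbb{W}_{2,t}$ at the terminal time only; inside Gronwall the quantity $\EE\sup_{r\le u}|\tilX_r-\tilX'_r|^2$ can then be bounded only by $\mathbb{W}_{2,t}(m,m')^2$, not by $\mathbb{W}_{2,u}(m,m')^2$, since no single coupling is simultaneously optimal at all intermediate times. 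Consequently the assertion that some iterate $\Phi^n$ is a strict contraction \emph{for every fixed} $T$ is unjustified; your estimate yields a contraction only when $C'T<1$. The standard repair is to run the argument on a short interval with $C'T_0<1$ and paste solutions across successive intervals --- the same localization the paper invokes (for a different purpose) at the end of its proof. In fairness, the paper's own passage to its integrated estimate (taking an infimum over couplings inside a time integral) glosses over the identical point, so both arguments are completed by the same time-localization.
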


The proof is similar to the proof for \cite[Proposition 2.1]{DETERING20202519} with a little generalization. Because of the appearance of the neighborhood process, we cannot expect a strong solution of the directed chain SDEs \eqref{def:sde_random_coef} (\textit{cf.} Proposition 2.1 of \cite{DETERING20202519}).

\begin{proof}
Let us first assume boundedness on all coefficients, i.e.
\begin{equation}
\label{def:lip+bdd}
\sup_{i} |V_i(t, x_1, \mu_1, y_1) - V_i(t, x_2, \mu_2, y_2)|\le C_T((|x_1-x_2| + |y_1-y_2| + W_2(\mu_1, \mu_2)) \wedge 1).
\end{equation}

We shall evaluate the Wasserstein distance between two probability measures $\mu_1, \mu_2$ on the space $C([0,T], \RR^N)$ of continuous functions, namely

\begin{equation}
\label{def:metric_process_measure}
D_t(\mu_1, \mu_2) :=\inf \bigg\{ \int(\sup_{0\le s\le t}|X_s(\omega_1) - X_s(\omega_2)|^2\wedge 1)\ud \mu(\omega_1, \omega_2) \bigg\}^{1/2}
\end{equation}
for $0\le t\le T$, where the infimum is taken over all the joint measure $\mu$ on $C([0,T], \RR^N)\times C([0,T], \RR^N)$ such that their marginals are $\mu_1, \mu_2$, and the initial joint distribution is $\theta \otimes \theta$, the initial marginals are $\theta$. Here, $X_s(\omega) = \omega(s), 0\le s\le T$ is the coordinate map of $\omega\in C([0,T], \RR^N)$. $D_T(\cdot, \cdot)$ defines a complete metric on $\mcM(C([0,T], \RR^N))$, which gives the weak topology to it. 

Given the distribution $m= \Law(\tilX) \in \mcM(C([0,T], \RR^N))$ of $\tilX$ that is independent of $B$ and $X_0$, it is well known that the following stochastic differential equation
\begin{equation}
\label{def:eq_xm}
	\ud X_t^m = V_0(t, X_t^m, m_t, \tilX_t) \ud t + \sum_{i=1}^d V_i(t, X_t^m, m_t, \tilX_t) \ud B_t^i
\end{equation} 
has a unique solution, based on the Lipschitz and linear growth condition on coefficients. Since $\tilX$ is independent of Brownian motion $B$, we can only expect the solution exists in weak sense. 

Define a map $\Phi: \mcM(C([0,T], \RR^N))\to \mcM(C([0,T], \RR^N))$ by $\Phi(m) := \Law(X_\cdot^m)$. We shall find a fixed point $m^*$ for the map $\Phi$ such that $\Phi(m^*)=m^*$ to show the uniqueness of the solution in the weak sense.

Assume $m_1=\Law(\tilX^1)$ and $ m_2=\Law(\tilX^2)$, then by rewriting \eqref{def:eq_xm} we have
$$ X^{m_i}_t = \theta + \int_0^t V_0(t, X_t^{m_i}, m_{i, t}, \tilX^i_t) \ud s + \sum_{i=1}^d\int_0^t V_i(t, X_t^{m_i}, m_{i, t}, \tilX^i_t) \ud B^i_s, \quad i=1,2.$$

Note that here we fix the initial state to be the same $\theta$ for both $X^{m_1}$ and $X^{m_2}$. Let $m$ be a joint distribution of $m_1, m_2$ and $\EE^m$ be the expectation under $m$.
Under the stronger assumption \eqref{def:lip+bdd},
\begin{align}
 \EE^m\big[\sup_{0\le s\le t} (X_s^{m_1} - X_s^{m_2})^2\big] &\le 2 \EE^m\bigg[\sup_{0\le s\le t} \int_0^s \big(V_0(v, X_v^{m_1}, m_{1, v}, \tilX^1_v) - V_0(v, X_v^{m_2}, m_{2, v}, \tilX^2_v)\big)^2 \ud v\bigg] \nonumber \\
& \hspace{-2em} + 2^d \sum_{i=1}^d\EE^m\bigg[\sup_{0\le s\le t} \int_0^s \big(V_i(v, X_v^{m_1}, m_{1, v}, \tilX^1_v) - V_i(v, X_v^{m_2}, m_{2, v}, \tilX^2_v)\big)^2 \ud v\bigg] \nonumber \\
&\hspace{-8em} \le 2^{d+3}(d+1) C_T \EE^m\bigg[ \sup_{0\le s\le t} \int_0^s \big((X_v^{m_1}- X_v^{m_2})^2 + W_2(m_{1, v}, m_{2, v})^2 + (\tilX^1_v - \tilX_v^2)^2\big)\wedge 1 \ud v \bigg] \nonumber \\
&\hspace{-8em}\le C\cdot \EE^m\bigg[\int_0^t \sup_{0\le v\le s} (X_v^{m_1}- X_v^{m_2})^2\wedge 1 \ud s \bigg]  + C \int_0^t W_2(m_{1, s}, m_{2, s})^2\wedge 1 \ud s \nonumber \\
& + C\cdot \EE^m\bigg[\int_0^t \sup_{0\le v\le s} (\tilX^1_v - \tilX_v^2)^2\wedge 1 \ud s \bigg]  \nonumber \\
& \hspace{-8em} = C\int_0^t \EE^m \big[ \sup_{0\le v\le s} (X_v^{m_1}- X_v^{m_2})^2\wedge 1\big] \ud s   + C \int_0^t W_2(m_{1, s}, m_{2, s})^2\wedge 1 \ud s \nonumber \\
& + C\int_0^t \EE^m\big[ \sup_{0\le v\le s} (\tilX^1_v - \tilX_v^2)^2\wedge 1\big] \ud s \label{def:temp_eq}
\end{align}
where we replace $2^{d+3}(d+1) C_T$ by $C$.
Note that by construction,
$$W_2(m_{1, s}, m_{2, s})^2\wedge 1 \le D_s(m_1, m_2)^2.$$

By taking infimum over all $m$ such that its marginals are $m_1, m_2$, the third term in \eqref{def:temp_eq} is bounded by 
$$C\int_0^t D_s(m_1, m_2)^2 \ud s.$$

Hence we get
$$D_t(\Phi(m_1), \Phi(m_2))^2 \le C \int_0^t D_s(\Phi(m_1), \Phi(m_2))^2 \ud s + 2C\int_0^t D_s(m_1, m_2)^2\ud s.$$

Then by applying Gronwall's lemma, we get
\begin{equation}
\label{def:eq_contraction}
D_t(\Phi(m_1), \Phi(m_2))^2 \le 2C e^{CT}\int_0^t D_s(m_1, m_2)^2\ud s. 
\end{equation}

For every $m\in  \mcM(C([0,T], \RR^N))$, let $m_1 = m$, $m_2 = \Phi(m)$, we get by iterating \eqref{def:eq_contraction}, 
\begin{equation}
D_T(\Phi^{(k+1)}(m), \Phi^{(k)}(m)) \le \sqrt{\frac{(2CT e^{CT})^k}{k!}} D_T(\Phi(m), m), \quad \forall k \in \N.
\end{equation}

This implies that $\{\Phi^{(k)}(m), k\in \N\}$ forms a Cauchy sequence converging to a fixed point $m^*$. This $m^*$ is the weak solution to directed chain SDE \eqref{def:sde_random_coef}\&\eqref{def:sde_random_coef_constrain}. To relax the bounded condition \eqref{def:lip+bdd} to \eqref{def:lip}, we can first cut $[0,T]$ to small time intervals such that the bounded assumption is satisfied on each interval, and establish the uniqueness on each interval and finally paste them together.
\end{proof}

\begin{prop}[Regularity]
\label{prop:regularity_Xtheta}
If $\theta\in L^2(\Omega)$, the solution of directed chain SDE \eqref{def:sde_random_coef}-\eqref{def:sde_random_coef_constrain} satisfies 
$$\| X^{\theta}\|_{\mcS^2_T} \le C(1 + \|\theta\|_2),$$
where $C = C(T)$, under the assumption of Proposition \ref{prop:sol of directed chain sde}. 
\end{prop}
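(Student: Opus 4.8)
The plan is to derive a closed Gronwall-type inequality for the function $f(t) := \EE[\sup_{0\le s\le t}|X_s^\theta|^2] = \|X^\theta\|_{\mcS^2_t}^2$ and then integrate it. First I would write the solution in its integral form \eqref{def:sde_random_coef} and split off the three contributions from the initial datum, the drift integral, and the stochastic integrals: by the elementary inequality $|a+b+c|^2 \le 3(|a|^2+|b|^2+|c|^2)$,
$$f(t) \le 3\EE|\theta|^2 + 3\,\EE\Big[\sup_{0\le s\le t}\Big|\int_0^s V_0(v,X_v^\theta,[X_v^\theta],\tilX_v)\ud v\Big|^2\Big] + 3\,\EE\Big[\sup_{0\le s\le t}\Big|\sum_{i=1}^d\int_0^s V_i(v,X_v^\theta,[X_v^\theta],\tilX_v)\ud B_v^i\Big|^2\Big].$$
For the drift term I would apply Cauchy--Schwarz in time, bounding it by $t\,\EE\int_0^t|V_0|^2\ud v$; for the martingale part I would use Doob's $L^2$ maximal inequality together with the It\^o isometry, bounding it by $4\sum_{i=1}^d\EE\int_0^t|V_i|^2\ud v$.

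Next I would invoke the linear growth hypothesis of Proposition \ref{prop:sol of directed chain sde}, namely $|V_i(v,X_v^\theta,[X_v^\theta],\tilX_v)|^2 \le C_T^2\big(1+|X_v^\theta|+|\tilX_v|+W_2([X_v^\theta],\mu_0)\big)^2$, and expand the square to reduce everything to the time-integrated quantities $\int_0^t \EE|X_v^\theta|^2\ud v$, $\int_0^t\EE|\tilX_v|^2\ud v$, $\int_0^t W_2([X_v^\theta],\mu_0)^2\ud v$, and $t$ itself.

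The key step, and the one where the directed-chain structure enters, is controlling the two non-classical terms. For the measure term I would use that $W_2([X_v^\theta],\mu_0)^2 \le 2\EE|X_v^\theta|^2 + 2\int_{\RR^N}|z|^2\mu_0(\ud z)$, the last integral being a finite constant since $\mu_0\in\mcP_2(\RR^N)$. For the neighbourhood term I would exploit the distributional constraint \eqref{def:sde_random_coef_constrain}: since $[\tilX_\cdot]=[X^\theta_\cdot]$ the marginals agree, so $\EE|\tilX_v|^2=\EE|X_v^\theta|^2$. Both quantities are thus dominated by $f(v)$, and collecting terms yields $f(t) \le 3\|\theta\|_2^2 + C_T\int_0^t(1+f(v))\ud v$ for a constant $C_T$ depending only on $T$ (through $t\le T$, the growth constant, and Doob's constant). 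Gronwall's lemma then gives $f(t) \le (3\|\theta\|_2^2 + C_T T)e^{C_T t}$, and taking $t=T$ and square roots produces the claimed bound $\|X^\theta\|_{\mcS^2_T}\le C(1+\|\theta\|_2)$.

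The main obstacle is ensuring $f(t)$ is finite a priori, so that the Gronwall step is legitimate rather than circular. I would handle this by the standard localisation: introduce the stopping times $\tau_n := \inf\{t\ge 0 : |X_t^\theta|\ge n\}$, run the entire estimate with $t$ replaced by $t\wedge\tau_n$ (which makes every term manifestly finite, since the state integrand is bounded on the stochastic interval and $\tilX\in L^2(\Omega\times[0,T])$ controls the $\tilX$ contribution), obtain a bound uniform in $n$, and finally let $n\to\infty$ via Fatou's lemma. A minor point to verify along the way is that $\tilX_v$ enters only through its marginal second moment, so that independence of $\tilX$ from $B$ is not needed for this estimate and the law constraint alone suffices.
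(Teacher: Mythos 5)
Your proof is correct and is essentially the argument the paper intends: the paper's own proof simply defers to ``similar procedures as Proposition 2.2 of Detering et al.,'' which is exactly this Gronwall-type moment estimate (split initial datum/drift/martingale, Cauchy--Schwarz, Doob plus It\^o isometry, linear growth, then Gronwall with localisation). Your identification of the directed-chain ingredient --- that the constraint \eqref{def:sde_random_coef_constrain} gives $\EE|\tilX_v|^2=\EE|X_v^\theta|^2$, so the neighbourhood and Wasserstein terms fold back into $\EE|X^\theta_v|^2$ --- is precisely the point that makes the classical argument close.
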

\begin{proof}
The proof follows from a similar procedures as \cite[Proposition 2.2]{DETERING20202519}.
\end{proof}

\subsection{Flow Property}
\label{sec:flow}
In the last part of this section, we discuss the flow property of directed SDEs informally. After establishing the solution to the exact directed chain SDE \eqref{def:sde_random_coef}, we also consider the process $X^{x, [\theta]}_\cdot$ that satisfies 
\begin{equation}
\label{def:sde_random_coef_x}
	X_\cdot^{x, [\theta]} = x + \int_0^\cdot V_0(s, X_s^{x, [\theta]}, [X_s^\theta], \tilX_{s}) \ud s + \sum_{i=1}^d \int_0^\cdot V_i(s, X_s^{x, [\theta]}, [X_s^\theta], \tilX_{s}) \ud B_s^i,
\end{equation}
where $x\in\RR^N$ is a fixed initial point and $\tilX_\cdot$ is the neighborhood process satisfying the constraints \eqref{def:sde_random_coef_constrain}, i.e., $\Law(\tilX_\cdot) = \Law(X^{\theta}_\cdot)=[X^{\theta}_\cdot]$. Note that $X_\cdot^{x, [\theta]}$ in \eqref{def:sde_random_coef_x} is strongly solvable with pathwise uniqueness, given the unique, weak solution $(X^\theta_\cdot, \tilX_\cdot, B_\cdot)$ as in Proposition \ref{prop:sol of directed chain sde}. 

\begin{prop}[Regularity]
Under the assumption in Proposition \ref{prop:sol of directed chain sde}, for every $\theta\in L^2(\Omega)$, $T > 0$ and $p \ge 2$, there exists  a constant $C = C(T, p)$ such that the solution of 
\eqref{def:sde_random_coef_x} satisfies 
$$\| X^{x, [\theta]}\|_{\mcS^p_T} \le C(1 + \|\theta\|_2 + |x|).$$
\end{prop}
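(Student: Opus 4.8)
The plan is to reproduce the $L^2$ moment bound behind Proposition \ref{prop:regularity_Xtheta} at the exponent $p$, replacing the It\^o isometry by the Burkholder--Davis--Gundy inequality. The first observation is that, once the weak solution $(X^\theta_\cdot,\tilX_\cdot,B_\cdot)$ of Proposition \ref{prop:sol of directed chain sde} is fixed, equation \eqref{def:sde_random_coef_x} is a genuine (strongly solvable) SDE for $X^{x,[\theta]}_\cdot$ in which the measure argument $[X^\theta_s]$ and the neighbourhood argument $\tilX_s$ enter only as \emph{exogenous} inputs. I would take the supremum over $[0,t]$, raise to the $p$-th power, and split the right-hand side into the contributions of the initial value $x$, the drift $V_0$, and the $d$ stochastic integrals; Jensen's inequality handles the drift and the Burkholder--Davis--Gundy inequality together with Jensen applied to the bracket handles each martingale term. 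The linear growth bound of Proposition \ref{prop:sol of directed chain sde}, namely $|V_i(s,X^{x,[\theta]}_s,[X^\theta_s],\tilX_s)|\le C_T(1+|X^{x,[\theta]}_s|+|\tilX_s|+W_2([X^\theta_s],\mu_0))$, then reduces the estimate to controlling over $[0,t]$ the three integrated quantities $\EE\sup_{v\le s}|X^{x,[\theta]}_v|^p$, $\EE|\tilX_s|^p$, and $W_2([X^\theta_s],\mu_0)^p$.

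The last two are exogenous and must be bounded without reference to $X^{x,[\theta]}$. For the measure term, the triangle inequality for $W_2$ gives $W_2([X^\theta_s],\mu_0)\le (\EE|X^\theta_s|^2)^{1/2}+(\int_{\RR^N}|y|^2\mu_0(\ud y))^{1/2}$, which Proposition \ref{prop:regularity_Xtheta} bounds by $C(1+\|\theta\|_2)$ uniformly in $s\le T$. For the neighbourhood term I would invoke the distributional constraint \eqref{def:sde_random_coef_constrain}: since $[\tilX_\cdot]=[X^\theta_\cdot]$, one has $\EE\sup_{s\le T}|\tilX_s|^p=\|X^\theta\|_{\mcS^p_T}^p$, so it suffices to establish the companion bound $\|X^\theta\|_{\mcS^p_T}\le C(1+\|\theta\|_2)$ for the chain SDE \eqref{def:sde_random_coef} itself. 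I expect this companion bound to be the crux, because it is self-referential: running the same decomposition on \eqref{def:sde_random_coef} and using $[\tilX_\cdot]=[X^\theta_\cdot]$ once more, the neighbourhood moment $\EE|\tilX_s|^p$ coincides with the self moment $\EE\sup_{v\le s}|X^\theta_v|^p$, so that with $g(t):=\EE\sup_{s\le t}|X^\theta_s|^p$ both the diagonal and the neighbourhood terms feed into the single functional $g$, yielding an inequality $g(t)\le C_0+C\int_0^t g(s)\ud s$ whose constant $C_0$ carries precisely the $\|\theta\|_2$-dependence of Proposition \ref{prop:regularity_Xtheta}. To make this rigorous I would first localize with the stopping times $\tau_n:=\inf\{t\ge 0:|X^\theta_t|\ge n\}$ so that every expectation is a priori finite, derive the inequality for $g_n(t):=\EE\sup_{s\le t\wedge\tau_n}|X^\theta_s|^p$ with a constant independent of $n$, apply Gronwall's lemma, and then let $n\to\infty$ by monotone convergence.

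Finally I would substitute the two input bounds back into the first step. After the same localization to secure finiteness, the functional $h(t):=\EE\sup_{s\le t}|X^{x,[\theta]}_s|^p$ satisfies $h(t)\le C(1+\|\theta\|_2^p+|x|^p)+C\int_0^t h(s)\ud s$, so Gronwall's lemma gives $h(T)\le C(T,p)(1+\|\theta\|_2^p+|x|^p)$, and taking $p$-th roots yields the claimed $\|X^{x,[\theta]}\|_{\mcS^p_T}\le C(1+\|\theta\|_2+|x|)$. The routine Burkholder--Davis--Gundy bookkeeping is not the real difficulty; the main obstacle is that the auxiliary process $\tilX$ is exogenous to \eqref{def:sde_random_coef_x}, so its $p$-th moments cannot be absorbed by Gronwall on $X^{x,[\theta]}$ alone, forcing one to first close the self-consistent estimate for $X^\theta$ through the constraint \eqref{def:sde_random_coef_constrain}.
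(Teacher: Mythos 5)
Your overall skeleton (Burkholder--Davis--Gundy plus linear growth plus Gronwall, with the constraint \eqref{def:sde_random_coef_constrain} used to transfer estimates from $X^\theta$ to $\tilX$) matches the paper's, whose proof simply cites ``the Burkholder-Davis-Gundy inequality and Proposition \ref{prop:regularity_Xtheta}, which is also satisfied by $\tilX$.'' However, your pivotal intermediate claim --- the companion bound $\|X^\theta\|_{\mcS^p_T}\le C(1+\|\theta\|_2)$ for all $p\ge 2$ --- is false whenever $\theta\in L^2(\Omega)\setminus L^p(\Omega)$. Indeed $X^\theta_0=\theta$, so $\|X^\theta\|_{\mcS^p_T}\ge \big(\EE|X^\theta_0|^p\big)^{1/p}=\|\theta\|_p$, which is $+\infty$ for such $\theta$, while $C(1+\|\theta\|_2)$ is finite. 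This is also why your self-referential Gronwall scheme cannot close: when you run the decomposition on \eqref{def:sde_random_coef}, the initial-value contribution to $g(t)=\EE\sup_{s\le t}|X^\theta_s|^p$ is $\EE|\theta|^p$, not $\|\theta\|_2^p$, and no localization helps because $g_n(0)=\EE|\theta|^p=\infty$ already. (A secondary flaw: after localizing with $\tau_n=\inf\{t:|X^\theta_t|\ge n\}$, the identity $\EE\sup_{s\le t\wedge\tau_n}|\tilX_s|^p=\EE\sup_{s\le t\wedge\tau_n}|X^\theta_s|^p$ breaks down, since equality of the laws of the two processes does not survive coupling one process with a stopping time built from the other; $\tau_n$ is not independent of $\tilX$.)

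By contrast, the paper's proof never asserts any $p$-th moment bound for the chain solution itself: the only input about $X^\theta$ and $\tilX$ is the $L^2$ bound of Proposition \ref{prop:regularity_Xtheta}, and the exponent $p$ appears only for $X^{x,[\theta]}$, which starts from the deterministic point $x$. That said, you have put your finger on a real difficulty that the paper's one-line proof glosses over: with coefficients growing linearly in the neighbourhood variable, the BDG estimate of the martingale terms produces $\EE\big(\int_0^T|\tilX_s|^2\ud s\big)^{p/2}$, a genuinely $p$-th-moment functional of $\tilX$ that is not controlled by $\|\tilX\|_{\mcS^2_T}$ alone. So the estimate can only be established as stated under an additional hypothesis (e.g.\ $\theta\in L^p$, with $\|\theta\|_p$ appearing on the right-hand side, or boundedness of the $V_i$ in the $\tilX$-argument); your detour through the false companion bound does not repair this gap, it only relocates it.
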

\begin{proof}
The proof follows from the Burkholder-Davis-Gundy inequality and Proposition \ref{prop:regularity_Xtheta}, which is also satisfied by $\tilX$.
\end{proof}

For the explanation purpose, we will add a superscript $\tilde{\theta}$ such that $X_t^{x, \theta, \tilde{\theta}}:= X_t^{x, \theta}$ and $\tilX^{\tilde{\theta}}_t:=\tilX_t$  to emphasize the neighborhood process start at $\tilde{\theta}$, independent of $\theta$. This notation is only used in this subsection. Thus, with the notation $B^0_t\equiv t$, $t \ge 0$, \eqref{def:sde_random_coef_x} is read as

\begin{equation} \label{eq:sde_random_coef_x2} 
	X_t^{x, [\theta], \tilde{\theta}} = x + \sum_{i=0}^d \int_0^t V_i(s, X_s^{x, [\theta], \tilde{\theta}}, [X_s^\theta], \tilX_{s}^{\tilde{\theta}}) \ud B_s^i, \quad t \ge 0 . 
\end{equation}

For different initial points $x x^\prime$ and the corresponding solutions $X_\cdot^{x, [\theta], \tilde{\theta}}$ and $X_\cdot^{x^\prime, [\theta], \tilde{\theta}}$,  
we have the following estimate: there exists a constant $ C>0$ such that 
$$\EE\big[ \sup_{t\le s\le T} \big|X_s^{x, [\theta], \tilde{\theta}} - X_s^{x', [\theta], \tilde{\theta}}\big|^2 \big] \le C |x-x'|^2$$
again by the Lipschitz continuity and the Burkholder-Davis-Gundy inequality. By the pathwise uniqueness of $X_\cdot^{x, [\theta], \theta}$, given the pair $(X_\cdot^{\theta}, \tilde{X}^{\tilde{\theta}}_\cdot)$, it follows
\begin{equation} \label{eq:sde_random_coef_x3}
    X_s^{x, [\theta], \tilde{\theta}}\bigg|_{x=\theta} = X_s^\theta, \quad 0\le s\le T.
\end{equation}

Now, with some abuse of notations, we denote by $X^{t, x, [\theta], \tilde{\theta}}_\cdot$ the solution to \eqref{eq:sde_random_coef_x2} with $X_t^{t, x, [\theta], \tilde{\theta}} = x$, denote by $(X^{t, \theta}_\cdot, \tilX^{t, \tilde{\theta}}_\cdot )$ the solution to \eqref{def:sde_random_coef} with $(X^{t, \theta}_t, \tilX^{t, \tilde{\theta}}_t ) = ( \theta, \tilde{\theta} )$. It follows from \eqref{eq:sde_random_coef_x3} that by the strong Markov property, for $0 \le t \le s \le r \le T$, we have the flow property 
\begin{equation}
\label{def:flow}
    (X_r^{s, X_s^{t, x, [\theta], \tilde{\theta}}, [X_s^{t, \theta}], \tilX_s^{t, \tilde{\theta}}}, X_{r}^{s, X_s^{t, \theta}}, \tilX_r^{s, \tilX_s^{t, \tilde{\theta}}}) = (X_r^{t, x, [\theta], \tilde{\theta}}, X_r^{t, \theta}, \tilX_r^{t, \tilde{\theta}}).
\end{equation}

We close section \ref{sec:directed_sde} at this point. After the introduction of the partial Malliavin derivatives, we will revisit the directed chain SDE and study the regularities of its derivatives.

\section{Partial Malliavin Calculus}
\label{sec:partialMalliavin}

In this section, we will briefly review the Malliavin calculus, following \cite{nualart2009malliavin}, and introduce the partial Malliavin derivatives for our problem.

\paragraph{Malliavin Calculus.} Let $H := L^2([0,T], \RR^d)$ be the Hilbert space, where we define Gaussian process, and $\mcS$ be the set of smooth functionals of the form
$$F(\omega) = f\bigg(\int_0^Th_1(t)\cdot \ud B_t(\omega), \dots, \int_0^Th_n(t)\cdot \ud B_t(\omega)\bigg),$$
where $f\in \mcC_p^\infty(\RR^n; \RR)$ and $\int_0^Th_i(t)\cdot \ud B_t = \sum_{j=1}^d \int_0^T h_i^j(t) \ud B^j_t$.

Then the Malliavin derivative of $F$, denoted by ${\bm D}F \in L^2(\Omega; H)$ is given by:
\begin{equation}
{\bm D}F = \sum_{i=1}^n \partial^i f \bigg(\int_0^Th_1(t)\cdot \ud B_t(\omega), \dots, \int_0^Th_n(t)\cdot \ud B_t(\omega)\bigg) h_i.
\end{equation}

As stated in \cite{nualart2009malliavin}, because of the isometry $L^2(\Omega\times[0,T]; \RR^d) \simeq L^2(\Omega; H)$, we are able to identify ${\bm D}F$ with a process $({\bm D}_rF)_{r\in[0,T]}$ taking values in $\RR^d$. Moreover, the set of smooth functionals, denoted by $\mcS$, is dense in $L^p(\Omega)$ for any $p\ge 1$ and ${\bm D}$ is closable as operator from $L^p(\Omega)$ to $L^p(\Omega; H)$. We define $\bD^{1, p}$ as the closure of the set $\mcS$ within $L^p(\Omega; \RR^d)$ with respect to the norm
$$\|F\|_{\bD^{1, p}} = \big( \EE |F|^p + \EE \| \bm{D} F \|_{H}^p \big)^{\frac{1}{p}}.$$

The higher order Malliavin derivatives are defined similarly, denoted by $\bm{D}^{(k)} F$, which is a random variable with values in $H^{\otimes k}$ defined as
$$\bm{D}^{(k)} F := \sum_{i_1, \dots, i_k=1}^n \partial^{(i_1, \dots, i_k)} f \bigg(\int_0^Th_1(t)\cdot \ud B_t(\omega), \dots, \int_0^Th_n(t)\cdot \ud B_t(\omega)\bigg).$$
We define $\bD^{k, p}$ to be the closure of the set of smooth functions $\mcS$ with respect to the norm:
$$\|F\|_{\bD^{k, p}} = \big( \EE |F|^p + \sum_{j=1}^k\EE \| \bm{D}^{(j)} F \|_{H}^p \big)^{\frac{1}{p}}.$$

The Malliavin derivative is also well defined for the general $E$-valued random variables, where $E$ is some separable Hilbert space, and we write $\bD^{1, p}(E)$ to be the closure of $\mcS$ under some appropriate metric with respect to $E$. We will use notation $\bD^{1, \infty}=\cap_{p\ge 1} \bD^{1, p}$.
 The adjoint operator of $\bm{D}$ is introduced as follows.
 \begin{defn}[Definition 1.3.1, \cite{nualart2009malliavin}]
 We denote by $\delta$ the adjoint of the operator $\bm{D}$. That is, $\delta$ is an unbounded operator on $L^2(\Omega; H)$ with values in $L^2(\Omega)$ such that
 \begin{enumerate}
 \item The domain of $\delta$, denoted by $\Dom \, \delta$, is the set of $H$-valued square integrable random variables $u\in L^2(\Omega; H)$ such that
 $$\big| \EE[\langle \bm{D}F, u \rangle_H ]\big| \le c \|F\|_2$$
 for all $F\in \bD^{1,2}$, where $c$ is some constant depending on $u$.
 \item If $u$ belongs to $\Dom \, \delta$, then $\delta(u)$ is the element of $L^2(\Omega)$ characterized by $$\EE[F\delta(u)] = \EE[\langle \bm{D}F, u\rangle_H]$$
 for any $F\in\bD^{1,2}$.
 \end{enumerate}
 \end{defn}
 
\subsection{Partial Malliavin Calculus}
The following remark motivate us to use partial Malliavin calculus.
\begin{remark}
Because of the appearance of a neighborhood process $\widetilde{X}_\cdot$, we propose the following problem. We shall remark that almost everything satisfied by the McKean-Vlasov SDE in \cite{crisan2018smoothing} is also satisfied by our directed chain SDE. However, we cannot directly apply their approach to argue the existence, contituity and differentiability of the density function of $X^{x, [\theta]}_t$. The reason is that a key step connecting the Malliavin derivative and $\partial_x X^{x, [\theta]}_t$, which is defined in \eqref{def:sde_random_coef_x}, may not hold in our case, i.e., in general, the identity
\begin{equation}
\label{eq: question}
\partial_x X_t^{x, [\theta]} = \bm{D}_r X_t^{x, [\theta]} \sigma^\top\big(\sigma \sigma^\top\big)^{-1}(r, X_r^{x, [\theta]}, [X^\theta_r], \tilX_r) \partial_x X_r^{x, [\theta]}
\end{equation}
does not hold for any $r\le t$. Thus, we cannot directly make use of the  integration by parts formulae in \cite{crisan2018smoothing}, and hence, we cannot argue the smoothness of $X^{x,[\theta]}_t$.
\end{remark}

\begin{question}
\label{Q:integral_by_parts}
How can we make connections between the first order derivative $\partial_x X^{x, [\theta]}_t$ and the Malliavin derivatives similar to \eqref{eq: question}, which would render us to apply integration by parts formula?
\end{question}

To address Question \ref{Q:integral_by_parts}, we consider the  partial Malliavin derivative  in \cite{nualart1989partial}. Let $\mc{G} := \sigma(\{\tilX_{t_i}, \forall t_i\in \bQ_T\})$ be the sigma algebra generated by the neighborhood process at all rational time, where $\bQ_T = \bQ\cap[0,T]$ denote the collection of all rational numbers in $[0,T]$. Due to the continuity of $\tilX$,  considering all rational time stamps is equivalent to considering the whole time interval $[0, T]$, i.e. $\mc{G} = \sigma(\tilX_s, 0\le s\le T)$. We associate to $\mc{G}$ the family of subspaces defined by the orthogonal complement to the subspace generated by $\{\bm{D} \tilX_{t_i}(\omega), t_i\in \bQ_T\}$, i.e., 
$$K(\omega) = \langle \bm{D}\tilX_{t_i}(\omega), t_i\in\bQ_T \rangle^\perp.$$

Since $\mc{G}$ is generated by countably many random variables, we say it is countably smooth generated. Then the family $\mc{H} := \{K(\omega), \omega\in \Omega\}$ has a measurable projection by this countably smoothness of $\mc{G}$. We can define the partial Malliavin derivative operator as $\bm{D}^{\mc{H}}$.

\begin{defn}[Definition 2.1, \cite{nualart1989partial}]
\label{def:D_H}
We define the partial derivative operator $\bm{D}^{\mc{H}} : \bD^{1,2} \to L^2(\Omega, H)$ as the projection of $\bm{D}$ on $\mc{H}$, namely, for any $F\in \bD^{1, 2}$,
$$\bm{D}^{\mc{H}} F = \Proj_{\mc{H}}(\bm{D}F) =  \Proj_{K(\omega)}(\bm{D}F)(\omega).$$

This operator, similar to $\bm{D}$, admits an identification with a process $(\bm{D}^{\mc{H}}_r)_{r\in [0,T]}$. Moreover, we define the norm associated with $\bm{D}^{\mc{H}} $ by
$$\|F\|_{\bD_{\mc{H}}^{k, p}} = \big( \EE |F|^p + \sum_{j=1}^k\EE \| \bm{D}^{{\mc{H}}, (j)} F \|_{H}^p \big)^{\frac{1}{p}},$$
where $\bm{D}^{{\mc{H}}, (j)}$ is defined as
$$\bm{D}^{{\mc{H}}, (j)} F = \Proj_{\mc{H}}(\bm{D}^{(j)}F) =  \Proj_{K(\omega)}(\bm{D}^{(j)} F)(\omega).$$
\end{defn}

Now we have the important fact that $\bm{D}^{\mc{H}} \tilX_{t} = 0$. This is because $\tilX_{t}$ is $\mc{G}$ measurable and hence equivalently 
\begin{equation}
\bm{D} \tilX_t \in \langle \bm{D}\tilX_{t_i}, t_i\in\bQ_T \cup \{t\}   \rangle; \quad t \in [0, T] . 
\end{equation}

Then the projection of $\bm{D} \tilX_t$ on to the orthogonal of $ \langle \bm{D}\tilX_{t_i}, t_i\in\bQ_T \cup \{t\}   \rangle$ must be zero. 
Similar to the common Malliavin calculus, we have an adjoint operator of $\bm{D}^{\mc{H}}$, which is denoted by $\delta_{\mc{H}}$, as well as the integration by parts formula for the partial Malliavin calculus.

\begin{defn}[Definition 2.3, \cite{nualart1989partial}]
\label{def:delta_H}
Set $\Dom\, \delta_{\mc{H}} = \{ u \in L^2(\Omega; H): \Proj_{\mc{H}} u \in \Dom\, \delta\}$. For any $u\in\Dom\, \delta_{\mc{H}}$, set $\delta_{\mc{H}}( u )= \delta( \Proj_{\mc{H}} u)$.
\end{defn}

Following Definition \ref{def:D_H} and \ref{def:delta_H}, we have integration by parts formula for $\bm{D}^{\mc{H}}$ and $\delta_{\mc{H}}$
\begin{equation}
\label{def:eq_partialMalliavin_IBPF}
    \EE[\langle h, \bm{D}^{\mc{H}} F \rangle] = \EE[\langle \Proj_{\mc{H}} h, \bm{D}F \rangle] = \EE[F \delta_{\mc{H}}(h)].
\end{equation}



\paragraph{Kusuoka-Stroock Processes}
In order to derive the differentiability of the density function, we mimic the procedure in \cite{crisan2018smoothing} and need to develop the integration-by-parts formulae introduced in the works of \cite{kusuoka20032} and \cite{kusuoka1987applications}.

\begin{defn}[Definition 2.8 in \cite{crisan2018smoothing}]
Let $E$ be a separable Hilbert space and let $r\in\RR$, $q, M\in\N$. We denote by $\bK_r^q(E, M)$ the set of processes $\Psi: [0,T]\times \RR^N\times \mcP_2(\RR^N)  \to \bD^{M, \infty}(E)$ satisfying the following:

\begin{enumerate}
\item For any multi-indices $\alpha, {\bm \beta}, \gamma$ satisfying $|\alpha|+|{\bm \beta}| + |\gamma| \le M$, the function
$$ [0,T]\times\RR^N\times\mcP_2(\RR^N)  \ni (t, x, [\theta]) \mapsto \partial_x^\gamma \partial^{\bm \beta}_{\bm v} \partial_\mu^\alpha \Psi(t, x, [\theta], v) \in L^p(\Omega)$$
exists and is continuous for all $p\ge 1$.
\item For any $p\ge 1$ and $m\in\N$ with $|\alpha|+|{\bm \beta}| + |\gamma|+m \le M$, we have
$$ \sup_{v\in (\RR^N)^{\#\bm \beta}} \sup_{t\in (0,T]} t^{-r/2} \bigg\| \partial_x^\gamma \partial^{\bm \beta}_{\bm v} \partial_\mu^\alpha \Psi(t, x, [\theta], v) \bigg\|_{\bD_{\mc{H}}^{m, p}(E)} \le C\,(1+|x|+\|\theta\|_2)^q$$
\end{enumerate}
\end{defn}

In our discussion, we do not consider the differentiability of the process $X$ with respect to the initial state of its neighborhood $\tilX$. This above definition of $\bK_r^q(E)$ is almost the same as the definition in \cite[Definition 2.8]{crisan2018smoothing}, except for the norm. The reason is that we only care about the existence and smoothing properties of the density function of $X^{x, [\theta]}$ and have to use the  partial Malliavin calculus. We remark that although the norms are different, all the regularity results under the norm $\|\cdot\|_{\bD^{k, p}}$ also holds under our norm $\|\cdot\|_{\bD_{\mc{H}}^{k, p}}$ because of the H\"{o}lder's inequality. To get the smoothness of density functions of a process start from a fixed initial point, we use $\mc{K}_r^q(\RR, M)$ as the class of Kusuoka-Stroock processes which do not depend on a measure term. By \cite[Lemma 2.11]{crisan2018smoothing}, if $\Psi \in \bK_r^q(E, M)$, then $\Phi(t, x, y) := \Psi(t, x, \delta_x, y) \in \mc{K}_r^q(E, M)$. 

\section{Smoothness of Densities}
\label{sec:smooth}
\subsection{Regularities of Solutions of Directed Chain SDEs}
For the purpose of establishing the integration by parts formulae for the directed chain SDEs and applying the results in \cite[Theorem 6.1]{crisan2018smoothing}, we only need to check all the regularities conditions with respect to parameters ($\theta, x$) contained in \cite[Section 3]{crisan2018smoothing}.

\begin{prop}[First-order derivatives]
\label{prop:first-order derivatives}
Suppose that $V_0, \dots, V_d \in \mcC^{1,1,1}_{b, Lip}(\RR^+\times\RR^N \times \mcP_2(\RR^N)\times \RR^N; \RR^N)$. Then the following statements hold:
\begin{enumerate}
	\item There exists a modification of $X^{x, [\theta]}$ such that for all $t\in[0,T]$, the map $x\mapsto X_t^{x, [\theta]}$ is $\PP$-a.s. differentiable. We denote the derivative by $\partial_x X^{x, [\theta]}$ and note that it solves the following SDE
	\begin{align}
	\label{def:first_order_x}
	\partial_x X^{x, [\theta]}_t &= Id_N + \sum_{i=0}^d\int_0^t \bigg\{\partial V_i(s, X_s^{x, [\theta]}, [X_s^\theta], \tilX_s) \partial_x X_s^{x, [\theta]} \bigg\} \ud B_s^i 
	\end{align}
	for every $t \in [0, T] $. 
	\item For all $t\in[0,T]$, the maps $\theta\mapsto X_t^\theta$ and $\theta\mapsto X_t^{x, [\theta]}$ are Fr\'echet differentiable in $L^2(\Omega)$, i.e. there exists a linear continuous map $\mc{D} X_t^\theta: L^2(\Omega) \to L^2(\Omega)$ such that for all $\gamma\in L^2(\Omega)$, 
	$$\| X_t^{\theta+\gamma} - X_t^{\theta} - \mc{D} X_t^\theta(\gamma) \|_2 = o(\|\gamma\|_2) \quad \text{as} \quad \, \|\gamma\|_2\to 0,$$
	and similarly for $X_t^{x, [\theta]}$. These processes satisfy the following stochastic differential equations
	\begin{align}
	\mc{D} X_t^{x, [\theta]}(\gamma) &= 
	 \sum_{i=0}^d \int_0^t\bigg[ \partial V_i(s, X_s^{x, [\theta]}, [X_s^\theta], \tilX_s)\mc{D}X_s^{x, [\theta]}(\gamma) +  \tilpartial V_i(s, X_s^{x, [\theta]}, [X_s^\theta], \tilX_s)\mc{D}\tilX_s(\gamma)\nonumber\\
	&\hspace{4em}+ \mc{D} V_i'(s, X_s^{x, [\theta]}, X_s^\theta, \tilX_s)(\mc{D}X_s^\theta(\gamma))  \bigg] \ud B_s^i, \label{def:frechet_derivative1}
	\end{align}
	\begin{align}
	\mc{D} X_t^{\theta}(\gamma) &=  \gamma +
	 \sum_{i=0}^d \int_0^t\bigg[ \partial V_i(s, X_s^{\theta}, [X_s^\theta], \tilX_s)\mc{D}X_s^{\theta}(\gamma) +  \tilpartial V_i(s, X_s^{\theta}, [X_s^\theta], \tilX_s)\mc{D}\tilX_s(\gamma)\nonumber\\
	&\hspace{4em}+ \mc{D} V_i'(s, X_s^{\theta}, X_s^\theta, \tilX_s)(\mc{D}X_s^\theta(\gamma))  \bigg] \ud B_s^i \label{def:frechet_derivative2}
	\end{align}
	where $\widetilde{V_i}$ is the lifting of $V_i$. Moreover, for each $x\in\RR^N$, $t\in [0,T]$, the map $\mcP_2 \ni [\theta]\mapsto X_t^{x, [\theta]} \in L^p(\Omega)$ is differentiable for all $p\ge 1$. So, $\partial_\mu X_t^{x, [\theta]}(v)$ exists and it satisfies the following equation
	\begin{align}
	\partial_\mu X_t^{x, [\theta]}(v) &= \sum_{i=0}^d \int_0^t \bigg\{ \partial V_i \big(s, X_s^{x, [\theta]}, [X_s^\theta], \tilX_s \big) \partial_\mu X_s^{x, [\theta]}(v) \nonumber \\
	& \hspace{2em} + \tilpartial V_i \big(s, X_s^{x, [\theta]}, [X_s^\theta], \tilX_s \big) \partial_\mu \tilX_s(v) \nonumber \\
	& \hspace{2em} + \EE'\bigg[\partial_\mu V_i\big( s, X_s^{x, [\theta]}, [X_s^\theta], \tilX_s, (X_s^{v, [\theta]})' \big) \partial_x (X_s^{v, [\theta]})'\bigg] \nonumber \\
	& \hspace{2em} + \EE'\bigg[\partial_\mu V_i\big( s, X_s^{x, [\theta]}, [X_s^\theta], \tilX_s, (X_s^{\theta'})' \big) \partial_\mu (X_s^{\theta', [\theta]})'(v)\bigg] \bigg\} \ud B_s^i,	 \label{def:eq_partialmuXx}
	\end{align} 
	where $(X_s^{\theta'})'$ is a copy of $X_s^\theta$ on the probability space $(\Omega', \mcF',\PP')$. Similarly, $ \partial_x(X_s^{v, [\theta]})'$ is a copy of $\partial_x X_s^{v, [\theta]}$ and $\partial_\mu (X_s^{\theta', [\theta]})'=\partial_\mu (X_s^{x, [\theta]})'\big|_{x=\theta'}$. Finally, the following representation holds for all $\gamma \in L^2(\Omega)$:
	\begin{equation}
	\label{def:eq_DXx[theta]}
		\mc{D}X_t^{x, [\theta]}(\gamma) = \EE'[\partial_\mu X_t^{x, [\theta]}(\theta')   \gamma'].
	\end{equation}
	
	\item For all $t\in [0,T]$, $X_t^{x, [\theta]}, X_t^\theta\in \bD^{1, \infty}$. Moreover, $\bm{D}^{\mc{H}}_r X^{x, [\theta]} = \bigg(\bm{D}^{\mc{H}, j}_r(X^{x, [\theta]})^i\bigg)_{\substack{1\le j\le N\\ 1\le i\le d}}$ satisfies, for $0\le r\le t$
	\begin{align}
		\bm{D}^{\mc{H}}_r X_t^{x, [\theta]} &= \sigma\big(r, X_r^{x, [\theta]}, [X_r^\theta], \tilX_r \big)+ \sum_{i=0}^d \int_r^t \bigg(\partial V_i(s, X_s^{x, [\theta]}, [X_s^\theta], \tilX_s)\bm{D}^{\mc{H}}_r X_s^{x, [\theta]} \bigg) \ud B_s^i,
		\label{def:sde_DX}
	\end{align}
where $ \sigma\big(r, X_r^{x, [\theta]}, [X_r^\theta], \tilX_r \big)$ is the $N\times d$ matrix with columns $V_1,\dots, V_d$.
\end{enumerate}
\end{prop}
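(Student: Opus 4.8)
All three items follow a common template: each candidate derivative is the solution of a \emph{linear} SDE whose coefficients are the bounded derivatives $\partial V_i$, $\tilpartial V_i$, $\partial_\mu V_i$ evaluated along the triple $(X^{x,[\theta]}, X^\theta, \tilX)$. Since $V_i\in\mcC^{1,1,1}_{b,\Lip}$ makes these derivatives bounded and Lipschitz, each such linear equation is uniquely solvable in $\mcS^2_T$ by a routine contraction/Gronwall argument; the genuine content is to show that the appropriate difference quotients converge to these solutions. I would treat the items in order of increasing difficulty. For item 1, observe that neither $[X_s^\theta]$ nor $\tilX_s$ depends on the initial point $x$, so \eqref{def:sde_random_coef_x} is, for fixed $(\theta,\tilX)$, an ordinary SDE in $x$ with $x$-independent (random) coefficients. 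First I would solve \eqref{def:first_order_x} for a candidate matrix-valued process $J$, then form the difference quotient $h^{-1}\big(X_t^{x+he_k,[\theta]}-X_t^{x,[\theta]}\big)$, subtract the $k$-th column of $J$, and bound the $\mcS^2_T$-error using the Lipschitz property of the $\partial V_i$ and Proposition \ref{prop:regularity_Xtheta}; Gronwall gives $L^2$-convergence as $h\to 0$, and a Kolmogorov-continuity estimate produces the claimed a.s.-differentiable modification.

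\textbf{Item 2 (the crux).} Here $\theta$ enters \eqref{def:sde_random_coef} through both the initial condition and the law $[X_s^\theta]$, and, via the constraint, through $\tilX$. I would lift each $V_i$ to $V_i'$ on $L^2(\Omega';\RR^N)$ and differentiate in a direction $\gamma\in L^2(\Omega)$; the chain rule yields the three contributions displayed in \eqref{def:frechet_derivative1}--\eqref{def:frechet_derivative2}, namely the state term $\partial V_i\,\mc{D}X_s(\gamma)$, the neighborhood term $\tilpartial V_i\,\mc{D}\tilX_s(\gamma)$, and the law term $\mc{D}V_i'(\cdots)(\mc{D}X_s^\theta(\gamma))$. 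Because $\mc{D}X_s^\theta(\gamma)$ reappears inside its own equation through the law term, the system is self-referential; I would close it by reading \eqref{def:frechet_derivative2} as a linear SDE for the pair $(\mc{D}X^\theta,\mc{D}\tilX)$, obtain a unique $\mcS^2_T$-solution by linearity and boundedness, and then verify $\|X_t^{\theta+\gamma}-X_t^\theta-\mc{D}X_t^\theta(\gamma)\|_2=o(\|\gamma\|_2)$ along the lines of \cite{crisan2018smoothing}. For the Lions derivative I would differentiate along the lift and use the transfer identity \eqref{def:eq_DXx[theta]}, $\mc{D}X_t^{x,[\theta]}(\gamma)=\EE'[\partial_\mu X_t^{x,[\theta]}(\theta')\gamma']$; substituting this into the law term converts the single Fr\'echet contribution into the two empirical-copy terms of \eqref{def:eq_partialmuXx}, whose copies $(X_s^{v,[\theta]})'$ and $(X_s^{\theta',[\theta]})'$ live on $(\Omega',\mcF',\PP')$. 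Establishing \eqref{def:eq_DXx[theta]} and the well-posedness of the self-referential \eqref{def:eq_partialmuXx} simultaneously is the delicate point, which I expect to resolve by a joint fixed-point argument over $(\partial_x X,\mc{D}X,\partial_\mu X)$ and which I regard as \emph{the main obstacle} of the whole proposition, since the neighborhood process $\tilX$ --- coupled to $X^\theta$ only through its law --- must be differentiated consistently with it.

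\textbf{Item 3.} I would first show $X_t^{x,[\theta]},X_t^\theta\in\bD^{1,\infty}$ by the standard Picard scheme: the Malliavin--Sobolev norms of the iterates stay uniformly bounded (using boundedness of the $\partial V_i$ and the Burkholder--Davis--Gundy inequality), so the limit lies in every $\bD^{1,p}$. Applying the partial Malliavin operator $\bm{D}^{\mcH}_r$ to \eqref{def:sde_random_coef_x} and using that (i) $x$ is deterministic, (ii) the law $[X_s^\theta]$ is deterministic, so its Malliavin derivative vanishes, and (iii) crucially $\bm{D}^{\mcH}\tilX_s=0$ by the very construction of $\mcH$, removes both the measure and the neighborhood contributions; differentiating the stochastic integrals then leaves only the diffusion-boundary term $\sigma(r,X_r^{x,[\theta]},[X_r^\theta],\tilX_r)$ at $s=r$ together with the linear feedback $\partial V_i\,\bm{D}^{\mcH}_r X_s^{x,[\theta]}$, giving exactly \eqref{def:sde_DX}. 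This is precisely where passing to the \emph{partial} derivative removes the obstruction identified in Question \ref{Q:integral_by_parts}: had we used the full $\bm{D}$, the term $\tilpartial V_i\,\bm{D}_r\tilX_s$ would survive and couple the equation to the uncontrolled $\bm{D}\tilX$.
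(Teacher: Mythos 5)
Your proposal is correct and takes essentially the same route as the paper: item 1 reduces to a classical SDE in $x$ with adapted random coefficients (the paper simply cites Kunita's flow theorem where you reprove it via difference quotients and Gronwall), item 2 rests on the same two ingredients the paper uses --- uniqueness for the linear SDEs with bounded coefficients and the transfer identity \eqref{def:eq_DXx[theta]} decomposing $\mc{D}X_t^\theta(\gamma)$ as $\partial_x X_t^{\theta,[\theta]}\gamma + \EE''[\partial_\mu X_t^{\theta',[\theta]}(\theta'')\gamma'']$ (the paper imports the Fr\'echet differentiability itself from a cited lemma rather than constructing it by fixed point). Item 3 is the identical argument: Picard iteration, uniform $\bD^{1,p}$ bounds, and the crucial fact $\bm{D}^{\mc{H}}\tilX = 0$ killing the neighborhood term, which you correctly identify as the point where the partial Malliavin derivative resolves Question \ref{Q:integral_by_parts}.
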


\begin{proof}
\begin{enumerate}
	\item The SDE of $X^{x, [\theta]}$ satisfies a classical SDE with adapted coefficients, by \cite[Theorem 7.6.5]{kunita1984stochastic} there exists a modification of $X_t^{x, [\theta]}$ which is continuously differentiable in $x$, and the first derivative satisfies \eqref{def:first_order_x}.
	\item The maps $\theta \mapsto X_t^\theta$ and $\theta \mapsto X_t^{x, [\theta]}$ are Fr\'echet differentiable by \cite[Lemma 4.17]{chassagneux2014probabilistic}. Then \eqref{def:frechet_derivative1} and \eqref{def:frechet_derivative2} follow from direct computation.
	
	Let us first rewrite the equation for $\mc{D}X_t^\theta(\gamma)$ in terms of the lifting $V'$,
	
	\begin{align}
	\mc{D}X_t^\theta(\gamma) &= \gamma + \sum_{i=0}^d \int_0^t\bigg[ \partial V_i(s, X_s^{\theta}, [X_s^\theta], \tilX_s)\mc{D}X_s^{\theta}(\gamma) +  \tilpartial V_i(s, X_s^{\theta}, [X_s^\theta], \tilX_s)\mc{D}\tilX_s(\gamma)\nonumber\\
	&\hspace{4em}+ \EE'\big[\partial_\mu V_i'(s, X_s^{\theta}, [X_s^\theta], \tilX_s, (X_s^{\theta'})')(\mc{D}(X_s^{\theta'})'(\gamma')) \big] \bigg] \ud B_s^i.  \label{def:eq_DXtheta}
	\end{align}
	
	We then consider the equation that we are going to prove for $\partial_\mu X_s^{\theta', [\theta]}(v)$, evaluated at $v=\theta''$ and multiplied by $\gamma''$ with both random variables defined on a probability space $(\Omega'', \mcF'', \PP'')$. Then taking expectation with respect to $\PP''$, we get
	\begin{align}
	\EE''\big[ \partial_\mu X_t^{\theta', [\theta]}(\theta'')\gamma'' \big] &=  \sum_{i=0}^d \int_0^t\bigg\{\partial V_i(s, X_s^{\theta}, [X_s^\theta], \tilX_s) \EE''[\partial_\mu X_s^{\theta', [\theta]}(\theta'')\gamma''] \nonumber\\
	&\hspace{1em} + \tilpartial V_i(s, X_s^{\theta}, [X_s^\theta], \tilX_s)\EE''[\partial_\mu \tilX_s \gamma''] \nonumber \\
	&\hspace{1em} + \EE'' \EE' \bigg[ \partial_\mu V_i(s, X_s^{\theta}, [X_s^\theta], \tilX_s, (X_s^{\theta'', [\theta]})') \partial_x (X_s^{\theta'', [\theta]})' \gamma'' \bigg] \nonumber \\
	&\hspace{1em} +\EE' \big[ \partial_\mu V_i(s, X_s^{\theta}, [X_s^\theta], \tilX_s, (X_s^{\theta'})') \EE''[ \partial_x (X_s^{\theta', [\theta]})'(\theta'') \gamma'' ]\big] \bigg\} \ud B^i_s.
	\end{align}
	
	Note that since $(\gamma'', \theta'')$ are defined on a separate probability space, we have $\EE''[\partial_\mu \tilX_s \gamma''] = \mc{D} \tilX_s(\gamma)$ and
	\begin{align*}
	\EE'' \EE' \big[ \partial_\mu V_i(s, X_s^{\theta}, [X_s^\theta], \tilX_s, (X_s^{\theta'', [\theta]})') \partial_x (X_s^{\theta'', [\theta]})' \gamma'' \big] &= \\
	& \hspace{-8em}\EE'[ \partial_\mu V_i(s, X_s^{\theta}, [X_s^\theta], \tilX_s, (X_s^{\theta'})') \partial_x (X_s^{\theta', [\theta]})' \gamma'  ].
	\end{align*}
	
	Then the dynamic of $\EE''[ \partial_\mu X_t^{\theta', [\theta]}(\theta'')\gamma'']$ reduces to
	\begin{align}
	\EE''\big[ \partial_\mu X_t^{\theta', [\theta]}(\theta'')\gamma'' \big] &=  \sum_{i=0}^d \int_0^t\bigg\{\partial V_i(s, X_s^{\theta}, [X_s^\theta], \tilX_s) \EE''[\partial_\mu X_s^{\theta', [\theta]}(\theta'')\gamma''] \nonumber\\
	&\hspace{2em} + \tilpartial V_i(s, X_s^{\theta}, [X_s^\theta], \tilX_s) \mc{D} \tilX_s(\gamma)  \nonumber \\
	&\hspace{-6em} +\EE' \big[ \partial_\mu V_i(s, X_s^{\theta}, [X_s^\theta], \tilX_s, (X_s^{\theta'})') \big[\partial_x (X_s^{\theta', [\theta]})' \gamma'  +  \EE''[ \partial_x (X_s^{\theta'', [\theta]})'(\theta'') \gamma'' ] \big]\bigg\} \ud B^i_s.
	\end{align}
	
	By \eqref{def:first_order_x}, we can evaluate the equation at $x=\theta$, multiply by x, and derive a dynamic of $\partial_x X_t^{\theta, [\theta]}\gamma$. It can be seen that $\partial_x X_t^{\theta, [\theta]}\gamma + \EE''[\partial_\mu X_t^{\theta', [\theta]}(\theta'')\gamma'']$ is equal to
	\begin{align}
	&\gamma + \sum_{i=0}^d \int_0^t\bigg\{\partial V_i(s, X_s^{\theta}, [X_s^\theta], \tilX_s) \EE''[\partial_\mu X_s^{\theta', [\theta]}(\theta'')\gamma'']  + \tilpartial V_i(s, X_s^{\theta}, [X_s^\theta], \tilX_s) \mc{D} \tilX_s(\gamma)  \nonumber \\
	&\hspace{0em} +\EE' \big[ \partial_\mu V_i(s, X_s^{\theta}, [X_s^\theta], \tilX_s, (X_s^{\theta'})') \big[\partial_x (X_s^{\theta', [\theta]})' \gamma'  +  \EE''[ \partial_x (X_s^{\theta'', [\theta]})'(\theta'') \gamma'' ] \big]\bigg\} \ud B^i_s.
	\end{align}
	
	We observe that this dynamic is identical to the dynamic for $\mc{D}X_t^\theta(\gamma)$ in \eqref{def:eq_DXtheta} and hence they are identical by uniqueness. Similarly, by using this result for $\mc{D}X_t^\theta(\gamma)$ and the same procedures, we are able to derive that $\EE''[\partial_\mu X_t^{x, [\theta]}(\theta'') \gamma'']$ is equal to $\mc{D} X_t^{x, [\theta]}(\gamma)$. So \eqref{def:eq_DXx[theta]} is proved. Moreover, $\partial_\mu X_t^{x, [\theta]}(v)$ exists and satisfies equation \eqref{def:eq_partialmuXx} by its definition.
	
	\item We first deduce the Malliavin derivative for $X^\theta$. Consider the Picard iteration given by
	\begin{align*}
	X_t^{\theta, 0} &= \theta, \\
	X_t^{\theta, k+1} &= \theta + \sum_{i=0}^d\int_0^t V_i(s, X_s^{\theta,k}, [\tilX^k_s], \tilX^k_s)\ud B^i_s,
	\end{align*}
	where $\tilX^k$ is a copy of $X^{\theta, k}$ independent of the Brownian motion and $\theta$.
	We have shown that such iteration induces a Cauchy sequence $\{\Phi^{(k)}(\Law(X_t^{\theta, 0})), k\in \N\}$ and a weak solution of the directed chain SDE. Since $V_0, V_i$ are bounded continuously differentiable, we have
	\begin{align*}
	\bm{D}^{\mc{H}, l}_r[ V_i^j(s, X_s^{\theta,k}, [\tilX^k_s], \tilX^k_s) ] = \partial V_i^j \bm{D}^{\mc{H}, l}_r X_s^{\theta, k}, 
	\end{align*}
	where we omit the arguments in $V_i$'s for notation simplicity. Note that $|\partial V_i^j |\le K$ for some constant $K>0$. We can then deduce that $V_i^j(s, X_s^{\theta,k}, [\tilX^k_s], \tilX^k_s) \in \bD^{1, \infty}$ by \cite[Proposition 1.5.5]{nualart2009malliavin}. Moreover, the Ito integral 
	$$\int_0^t V_i^j(s, X_s^{\theta,k}, [\tilX^k_s], \tilX^k_s) \ud B_s^i,  \quad i=1, \dots, d$$
	 belongs to $\bD^{1,2}$ and for $r\le t$, we have
	\begin{align*}
	\bm{D}^{\mc{H}, l}_r\Big[ \int_0^t V_i^j(s, X_s^{\theta,k}, [\tilX^k_s], \tilX^k_s) \ud B_s^i \Big] &= V_l^j(r, X_r^{\theta,k}, [\tilX^k_r], \tilX^k_r) \\
	&\hspace{2em}+ \int_r^t \bm{D}^{\mc{H},l}_r[V_i^j(s, X_s^{\theta,k}, [\tilX^k_s], \tilX^k_s)]\ud B_s^i.
	\end{align*}
	On the other hand, the Lebesgue integral $\int_0^t V_0^j(s, X_s^{\theta,k}, [\tilX^k_s], \tilX^k_s) \ud s$ is also in the space $\bD^{1,2}$ and have the dynamics
	$$\bm{D}^{\mc{H}, l}_r \Big[\int_0^t V_0^j(s, X_s^{\theta,k}, [\tilX^k_s], \tilX^k_s) \ud s\Big] = \int_0^t \bm{D}^{\mc{H}, l}_r[V_0^j(s, X_s^{\theta,k}, [\tilX^k_s], \tilX^k_s) ]\ud s.$$
	
	\hspace{2em}Therefore, the dynamic of $\bm{D}^{\mc{H}, l}_r[X_t^{\theta, k+1}]$ has exactly the form of \eqref{def:sde_DX} by the chain rule of Malliavin derivative. Due to the reason that $\tilX^k$ and $X^{\theta, k}$ has the same distribution, by Doob's maximal inequality and Burkholder's inequality,
	$$\EE[\sup_{0\le s\le t} | \bm{D}^{\mc{H}, l}_r X^{\theta, k}_s |^p] \le c_1,$$
	where $c_1$ is a constant that depends only on $K, d, p$ for $p\ge 2$. Moreover, we define a metric similar to \eqref{def:metric_process_measure} but raise the power to general $p\ge 1$, 
	$$D_{t, p}(\mu_1, \mu_2) :=\inf \bigg\{ \int(\sup_{0\le s\le t}|X_s(\omega_1) - X_s(\omega_2)|^p\wedge 1)\ud \mu(\omega_1, \omega_2) \bigg\}^{1/p}.$$
	Note that the weak convergence result of the $X^\theta$ holds under any metric $D_{t, p}$ with $p\ge 2$.
	
	We then have the following, 
	$$D_t(m^{k+1}, m^k)^2 \le c_1 \int_0^t D_{s, p}(\Law(X^{\theta,k}), \Law(X^{\theta, k-1})) \ud s + c_2 \int_0^t D_s(m^{k}, m^{k-1})^2 \ud s,$$
	by a similar approach as in the proof of Proposition \ref{prop:sol of directed chain sde}, where $c_1, c_2$ are positive constants depending on $K, d, p$ and $m^k = \Law(\bm{D}_r^{\mc{H}, l} X^{\theta, k})$. By iteration, we get that $\{m^k, k\in\N\}$ forms a Cauchy sequence and has limit.
	We have now proved that 
	\begin{align}
		\bm{D}^{\mc{H}}_r X_t^{\theta} &= \sigma\big(r, X_r^{\theta}, [X_r^\theta], \tilX_r \big)+ \sum_{i=0}^d \int_r^t \bigg(\partial V_i(s, X_s^{\theta}, [X_s^\theta], \tilX_s)\bm{D}^{\mc{H}}_r X_s^{\theta} \bigg) \ud B_s^i, 
	\end{align}
	and the solution of $\bm{D}^{\mc{H}}_r X_t^{\theta}$ exists uniquely in the weak sense. In the iteration, it can be easily proved by induction that $X^{\theta, k} \in \bD^{1, \infty}$ and the sequence $\bm{D}^{\mc{H}}_r X_t^{\theta, k}$ is uniformly bounded in $L^p(\Omega; H)$ for $p\ge 2$. Therefore, we have $X_t^\theta \in \bD^{1, \infty}$. The proof for $X^{x, [\theta]}_t$ is similar, we can set $X_t^{x, [\theta], 0} = \theta$ add another equation for $X^{x, [\theta], k}_t$ into the above Picard iteration
	$$X_t^{x, [\theta], k+1} = x + \sum_{i=0}^d\int_0^t V_i(s, X_s^{x, [\theta] ,k}, [\tilX^k_s], \tilX^{x,k}_s)\ud B^i_s.$$
	Then the procedures are the same as the deduction for $\bm{D}^{\mc{H}}_r X^{\theta}$.
\end{enumerate}
\end{proof}

For the purpose of more general applications, we want to make sure that the density for directed chain SDE is at least second order differentiable, hence we need to extend the above first order regularities to higher orders. Following \cite{crisan2018smoothing}, we provide a result for general case, which characterize $X_t^{x, [\theta]}$ as a Kusuoka-Stroock process.

\begin{thm}
\label{thm:kusuoka}
Suppose $V_0, \dots, V_d \in \mcC^{k,k,k}_{b,\Lip}([0,T]\times \RR^N\times \mcP_2(\RR^N)\times\RR^N; \RR^N)$, then $(t, x, [\theta])\mapsto X_t^{x, [\theta]} \in \bK_0^1(\RR^N, k)$. If, in addition, $V_0, \dots, V_d$ are uniformly bounded then $(t, x, [\theta]) \mapsto X_t^{x, [\theta]} \in \bK_0^0(\RR^N, k)$.
\end{thm}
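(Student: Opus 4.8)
The plan is to prove the theorem by induction on the order $k$ of differentiability, showing that $X_t^{x,[\theta]}$ and all its derivatives in $(x,[\theta])$ satisfy the two defining conditions of the Kusuoka--Stroock class $\bK_0^1(\RR^N,k)$ (or $\bK_0^0(\RR^N,k)$ in the bounded case). The base case $k=1$ is essentially contained in Proposition \ref{prop:first-order derivatives}: there we established that $X^{x,[\theta]}$ is differentiable in $x$ and $[\theta]$, Fr\'echet differentiable in $\theta$, and lies in $\bD^{1,\infty}$, with each derivative process solving a linear SDE driven by the coefficients' first derivatives. First I would verify the $r$-exponent normalization: since $X_0^{x,[\theta]} = x$ is bounded below in $t$ (the process does not vanish as $t\to 0$), the correct homogeneity index is $r=0$, matching $\bK_0^q$, and the growth exponent $q=1$ arises from the linear growth bound $\|X^{x,[\theta]}\|_{\mcS^p_T}\le C(1+|x|+\|\theta\|_2)$ from the Regularity Propositions; when the $V_i$ are uniformly bounded the initial condition $x$ is the only source of $x$-dependence through the flow, and a standard argument improves $q=1$ to $q=0$.

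Next I would set up the inductive step. Assuming the claim holds up to order $k-1$, I take an arbitrary multi-index combination $\partial_x^\gamma \partial_{\bm v}^{\bm\beta}\partial_\mu^\alpha$ with total order at most $k$ and differentiate the defining SDE \eqref{def:sde_random_coef_x}. Because $V_0,\dots,V_d \in \mcC^{k,k,k}_{b,\Lip}$, differentiating under the stochastic integral and applying the chain rule for measure derivatives produces a linear SDE for the top-order derivative process, whose coefficients are the first derivatives $\partial V_i$ (bounded by hypothesis) multiplying the top-order term, plus an inhomogeneous forcing term built from strictly lower-order derivatives of $X^{x,[\theta]}$ and of the coefficients $V_i$. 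The structure mirrors equations \eqref{def:first_order_x}--\eqref{def:eq_partialmuXx}. By the inductive hypothesis, every forcing term already lies in the appropriate Kusuoka--Stroock class, so it has finite $L^p$ norms in every $\bD_{\mc{H}}^{m,p}$ with the right growth in $(1+|x|+\|\theta\|_2)$. I would then invoke the stability of these classes under solving linear SDEs: applying the Burkholder--Davis--Gundy and Gronwall estimates, together with the $\bD_{\mc{H}}^{m,p}$-norm bounds inherited from the partial Malliavin regularity in part (3) of Proposition \ref{prop:first-order derivatives}, shows the solution of the linear SDE again satisfies conditions (1) and (2) of the Kusuoka--Stroock definition with $r=0$, $q=1$.

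The key closure property I must establish carefully is that the partial Malliavin norms $\|\cdot\|_{\bD_{\mc{H}}^{m,p}}$ propagate through the higher-order derivative SDEs. Here the argument of part (3) of Proposition \ref{prop:first-order derivatives}, run via the Picard iteration $X_t^{\theta,k+1}$ and the contraction in the metric $D_{t,p}$, must be repeated for the iterated Malliavin derivatives $\bm D^{\mc{H},(j)}$ of the derivative processes; since the partial Malliavin derivative annihilates the frozen neighborhood process ($\bm D^{\mc{H}}\tilX_t = 0$), the recursion for $\bm D^{\mc{H},(j)} \partial_x^\gamma\partial_{\bm v}^{\bm\beta}\partial_\mu^\alpha X_t^{x,[\theta]}$ closes without picking up uncontrolled contributions from $\tilX$. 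The main obstacle, and the part requiring the most care, will be bookkeeping the interplay between the measure-derivative variables $\bm v$ and the spatial/Malliavin derivatives: one must confirm that the mixed derivatives commute where needed (justified by \cite[Lemma 4.1]{rainer2017}), that the copy-space expectations $\EE'[\,\cdot\,]$ appearing in the $\partial_\mu$ dynamics preserve the growth and regularity bounds uniformly in the extra variables $v\in(\RR^N)^{\#\bm\beta}$, and that the supremum over $v$ in condition (2) of the definition remains finite after iterating. Once this uniform-in-$v$ control is in hand, assembling the pieces yields $X_t^{x,[\theta]}\in\bK_0^1(\RR^N,k)$, and the uniformly bounded case follows by tracking that boundedness of the $V_i$ removes the $\|\theta\|_2$-growth, giving $q=0$.
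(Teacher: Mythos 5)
Your proposal is correct and follows essentially the same route as the paper: the paper's proof defers to the induction-on-derivative-order scheme of Crisan--McMurray, instantiated through Lemma \ref{lm:linear_Yx} (a priori $\mcS^p_T$ estimates for the generic linear SDE solved by each derivative) and Proposition \ref{prop:differentiable_Yx} (stability of that linear-equation class under further differentiation in $x$, $\bm{v}$, $\mu$ and under the partial Malliavin derivative $\bm{D}^{\mc{H}}$, using $\bm{D}^{\mc{H}}\tilX = 0$), which is exactly the structure you outline, including the base case from Proposition \ref{prop:first-order derivatives} and the improvement to $q=0$ when the $V_i$ are bounded.
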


Note that \cite[Proposition 6.7 and 6.8]{crisan2018smoothing} can be extended our directed chain case, since the coefficients $V_i:[0,T]\times \RR^N \times \mcP_2(\RR^N) \times \RR^N \to \RR^N$ in directed chain SDEs can be written as a map of the form $\Omega \times [0,T] \times \RR^N \times \mcP_2(\RR^N) \ni (\omega, t, x, \mu) \mapsto a(\omega, t, x, \mu)\in \RR^N$. This is because the auxiliary dependence on the neighborhood in the coefficients can be thought as the dependence on an initial state $x$, initial distribution $\mu$ and independent Brownian motions, which are implied in the term $\omega$. Moreover, we are able to take care of the extra term with $\mc{D}\tilX_s$ due to the differentiability and regularity of $V_i$.

Similar to Proposition \ref{prop:first-order derivatives}, each type of derivative (w.r.t. $x, \mu$ or $v$) of $X_t^{x, [\theta]}$ satisfies a linear equation. We will introduce a general linear equation, derive some a priori $L^p$ estimates on the solution and then show that this linear equation is again differentiable under some conditions in the next Lemma. Whenever we say $a_k$, $k=1,2,3$, we also mean $\tila_1$.

\begin{lm}
\label{lm:linear_Yx}
Let $v_r$ be one element of the tuple $\bm{v}=(v_1, \dots, v_{\#\bm{v}})$ and $Y^{x, [\theta]}(\bm{v})$ solve the following SDE
\begin{align}
Y_t^{x, [\theta]}(\bm{v}) &= a_0 + \sum_{i=0}^d \int_0^t \bigg\{ a_1^i(s, x, [\theta]) Y_s^{x, [\theta]}(\bm{v}) + \tila^i_1(s, x, [\theta])\tilY_s(\bm{v}) + a_2(s, x, [\theta], \bm{v})  \nonumber \\
&\hspace{2em}+ \EE'\big[ a_3^i(s, x, [\theta], \theta') (Y_s^{\theta', [\theta]})'(\bm{v}) + \sum_{r=1}^{\#\bm{v}}a_3^i(s, x, [\theta], \theta') (Y_s^{v_r, [\theta]})'(\bm{v})  \big] \bigg\}\ud B^i_s, \label{def:eq_linearderivative}
\end{align}
where, for all $i=1, \dots, d$, the coefficients $(t, x, [\theta], \bm{v})\mapsto a_k(t, x, [\theta], \bm{v})$ are continuously in $L^p(\Omega)$ $\forall p\ge 1$, $k=1,2,3$ and 
\begin{align*}
	& a_0 \in \RR^N, \\
	& a_1, \tila_1: \Omega\times [0,T] \times \RR^N \times \mcP_2(\RR^N) \to \RR^{N\times N} \\
	& a_2 : \Omega\times [0,T] \times \RR^N \times \mcP_2(\RR^N) \times (\RR^N)^{\#\bm{v}} \to \RR^N \\
	& a_3^i : \Omega' \times \Omega\times [0,T] \times \RR^N \times \mcP_2(\RR^N) \times \RR^N\to \RR^{N\times N}.
\end{align*} 
In \eqref{def:eq_linearderivative}, $(Y^{\theta', [\theta]})'$ is a copy of $Y^{x, [\theta]}$ on the probability space $(\Omega', \mcF', \PP')$ where the initial state is $\theta'$. Similarly, $(Y^{v_r, [\theta]})'$ is a copy of $Y^{x, [\theta]}$ on the probability space $(\Omega', \mcF', \PP')$ where the initial state is $v$. $\tilY$ is the neighborhood process, which has the same law as $Y^\theta$ and independent with Brownian motion $B$.
If we make the following boundedness assumptions
\begin{enumerate}
	\item $\sup_{x\in\RR^N, [\theta]\in \mcP_2(\RR^N), \bm{v}\in(\RR^N)^{\# \bm{v}}} \|a_2(\cdot, x, [\theta], \bm{v})\|_{\mcS_T^p} < \infty$,
	\item $a_1, \tila_1$ and $ a_3$ are uniformly bounded,
	\item $\sup_{x\in\RR^N, [\theta]\in \mcP_2(\RR^N), \bm{v}\in(\RR^N)^{\# \bm{v}}} \|a_2(\cdot, x, [\theta], \bm{v})\|_{\mcS_T^2} < \infty$.
\end{enumerate}
then we have the following estimate for $C = C(p, T, a_1, a_3)$
$$\|Y^{x, [\theta]}(\bm{v}) \|_{\mcS_T^p} \le C(|a_0| + \|a_2(\cdot, x, [\theta], \bm{v})\|_{\mcS_T^p} + \|a_2(\cdot, x, [\theta], \bm{v})\|_{\mcS_T^2}).$$
Moreover, we also get that the mapping 
$$[0,T] \times \RR^N \times \mcP_2(\RR^N) \times (\RR^N)^{\#\bm{v}} \ni (t, x, [\theta], \bm{v}) \mapsto Y_t^{x, [\theta]}(\bm{v}) \in L^p(\Omega)$$
is continuous.
\end{lm}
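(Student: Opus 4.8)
The plan is to establish the two assertions — the $L^p$ bound and the $L^p(\Omega)$-continuity — by a Gronwall argument organized in two tiers ($L^2$ first, then general $L^p$), followed by a stability estimate for the continuity. Throughout, I fix $[\theta]$ and $\bm{v}$, abbreviate $Y^x := Y^{x, [\theta]}(\bm{v})$, and set $f_p(t,x) := \EE[\sup_{s\le t}|Y_s^x|^p]$ together with $F_p(t) := \sup_{x\in\RR^N} f_p(t,x)$ and $G_2(s) := \sup_{x'}\EE[\sup_{r\le s}|Y_r^{x'}|^2]$. To justify applying Gronwall's lemma I would first localize by the stopping times $\tau_n := \inf\{t : |Y_t^x|\ge n\}$, derive every estimate on $[0, t\wedge\tau_n]$ where the integrands are bounded, and then pass to the limit $n\to\infty$ using Fatou's lemma.

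For the a priori estimate I would apply $\EE[\sup_{s\le t}|\cdot|^p]$ to \eqref{def:eq_linearderivative}, treating the $i=0$ term as a drift (H\"older in time) and the $i=1,\dots,d$ terms as martingales (Burkholder--Davis--Gundy, then H\"older), to reach $f_p(t,x)\le C(|a_0|^p + \EE\int_0^t|\text{integrand}|^p\,\ud s)$. By the uniform boundedness of $a_1,\tila_1,a_3$ the self term $a_1^i Y_s^x$ contributes $\int_0^t F_p(s)\,\ud s$; the neighborhood term $\tila_1^i\tilY_s$ contributes the same, since $\tilY\overset{d}{=}Y^\theta$ gives $\EE[|\tilY_s|^p]=\int\EE[|Y_s^{x'}|^p]\,[\theta](\ud x')\le F_p(s)$; and the inhomogeneity $a_2$ contributes $\|a_2\|_{\mcS_T^p}^p$. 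The delicate terms are the expectation (copy) terms, which I would estimate by Jensen/Cauchy--Schwarz at exponent $2$: with $a_3$ bounded, $|\EE'[a_3^i(Y_s^{\theta',[\theta]})']|\le K(\EE'[|(Y_s^{\theta',[\theta]})'|^2])^{1/2}=K(\int\EE[|Y_s^{x'}|^2]\,[\theta](\ud x'))^{1/2}\le K\,G_2(s)^{1/2}$, and likewise for the $(Y^{v_r,[\theta]})'$ terms. This is precisely where the $L^2$ norm enters. Running the same computation at $p=2$ yields a self-contained inequality $G_2(t)\le C(|a_0|^2+\|a_2\|_{\mcS_T^2}^2+\int_0^t G_2(s)\,\ud s)$, hence $G_2(T)\le C(|a_0|^2+\|a_2\|_{\mcS_T^2}^2)$. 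Feeding this bound for the copy terms back into the $L^p$ inequality gives $F_p(t)\le C(|a_0|^p+\|a_2\|_{\mcS_T^p}^p+(|a_0|+\|a_2\|_{\mcS_T^2})^p+\int_0^t F_p(s)\,\ud s)$, and Gronwall followed by taking $p$-th roots delivers the stated estimate.

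For the continuity statement I would first handle the time variable: for $t'>t$, $Y_{t'}^x-Y_t^x$ is the increment of the stochastic integral over $[t,t']$, whose $L^p$ norm is controlled by Burkholder--Davis--Gundy and the now-finite $L^p$ bounds on the integrand, and therefore tends to $0$ as $t'\to t$. For the parameters I would set $Z := Y^{x,[\theta]}(\bm{v})-Y^{x',[\theta']}(\bm{v}')$; since both processes start at $a_0$, the difference $Z$ again solves an equation of the form \eqref{def:eq_linearderivative} with zero initial value, the \emph{same} coefficients $a_1(\cdot,x,[\theta]),\tila_1,a_3$, and an effective inhomogeneity $\hat a_2$ gathering all terms in which a coefficient (or the neighborhood process) is evaluated at $(x,[\theta],\bm{v})$ rather than $(x',[\theta'],\bm{v}')$, e.g.\ $(a_1^i(s,x,[\theta])-a_1^i(s,x',[\theta']))Y_s^{x'}$ and the analogous differences for $\tila_1,a_2,a_3$. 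Applying the first part to $Z$ with $a_0=0$ gives $\|Z\|_{\mcS_T^p}\le C(\|\hat a_2\|_{\mcS_T^p}+\|\hat a_2\|_{\mcS_T^2})$, so it remains to verify that $\hat a_2\to 0$ in both norms. This follows from the hypothesis that each $a_k$ is continuous into $L^p(\Omega)$ for every $p\ge 1$, together with the uniform $L^p$ moment bounds on $Y^{x'}$ (and on $\tilY$, via the regularity of $\theta\mapsto X^\theta$) from the first part and H\"older's inequality to control the products of coefficient-differences against the processes.

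The main obstacle is the mean-field coupling through the expectation terms: these link the whole family $\{Y^{x'}\}_{x'}$ and force the two-tier structure, so the point to get right is that the copy terms must be controlled by the $L^2$ average (Jensen/Cauchy--Schwarz against $[\theta]$), which produces the $\|a_2\|_{\mcS_T^2}$ contribution, while the pathwise terms in $Y^x$ and $\tilY$ produce $\|a_2\|_{\mcS_T^p}$; attempting a single-tier $L^p$ Gronwall without first closing the $L^2$ estimate obscures why both norms genuinely appear. The localization justifying the a priori finiteness of $F_p$ and $G_2$, and the verification that the difference process $Z$ is again of the form \eqref{def:eq_linearderivative}, require care but are otherwise routine.
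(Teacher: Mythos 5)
Your a priori estimate is correct and is essentially the argument the paper relies on: Burkholder--Davis--Gundy plus Gronwall, run first at $p=2$ so that the expectation (copy) terms close on themselves, then fed back into the $L^p$ inequality, with the neighborhood term handled through the distributional identity $\tilY \overset{d}{=} Y^{\theta}$ --- which is exactly the one observation the paper's proof records explicitly before deferring the Gronwall/BDG computation to \cite{crisan2018smoothing}. This part, including the explanation of why both $\|a_2\|_{\mcS_T^p}$ and $\|a_2\|_{\mcS_T^2}$ must appear, I would accept as written.

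The continuity argument, however, has a genuine gap: circularity. You declare that $Z = Y^{x,[\theta]}(\bm{v}) - Y^{x',[\theta']}(\bm{v}')$ solves an equation of the form \eqref{def:eq_linearderivative} with an effective inhomogeneity $\hat a_2$ gathering ``all terms in which a coefficient (or the neighborhood process) is evaluated at $(x,[\theta],\bm{v})$ rather than $(x',[\theta'],\bm{v}')$,'' and then assert that $\hat a_2 \to 0$ follows from the $L^p$-continuity of the coefficients plus moment bounds. But $\hat a_2$ then necessarily contains the terms $\tila_1^i(\tilY_s - \tilY_s')$ and $\EE'\big[a_3^i\big((Y_s^{\theta',[\theta]})' - (Y_s^{\theta'',[\theta']})'\big)\big]$ with $\theta'\sim[\theta]$ and $\theta''\sim[\theta']$: these are not coefficient differences, and their smallness is precisely an instance of the continuity statement being proved, so it cannot be an input. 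Note also that the copy difference is not a ``copy of $Z$'' in the sense of \eqref{def:eq_linearderivative}, since the two copies are evaluated at initial parameters with different laws, so without further structure the two primed expectations cannot even be compared pathwise. The repair is to introduce an optimal $W_2$-coupling $(\theta',\theta'')$ of $[\theta]$ and $[\theta']$ on $(\Omega',\mcF',\PP')$, set $\Delta_s := (Y_s^{\theta',[\theta]})' - (Y_s^{\theta'',[\theta']})'$, and run your two-tier scheme once more at the level of $\EE'\EE\big[\sup_{s\le t}|\Delta_s|^2\big]$: this quantity satisfies a closed Gronwall inequality whose forcing involves only coefficient differences evaluated at coupled points (these do vanish as $(x',[\theta'],\bm{v}')\to(x,[\theta],\bm{v})$, by the assumed $L^p$-continuity, dominated convergence and the uniform bounds), and the resulting $L^2$ bound for $\Delta$ --- together with its equal-in-law counterpart for $\tilY-\tilY'$, which additionally requires specifying how the neighborhood processes for the two parameter values are jointly realized --- is then fed back into the $L^p$ estimate for $Z$. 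In short, the continuity needs the same $L^2$-first bootstrap you used for the a priori bound, applied to coupled differences; it cannot be reduced to a perturbation by an exogenous inhomogeneity.
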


\begin{proof}
Note that $\|\tilY(\bm{v})\|_{\mcS^p_T} = \|(Y_s^{\theta', [\theta]})'(\bm{v}) \|_{\mcS^p_T}$ since they have the same distribution. The rest proof is identical to \cite[Lemma 6.7]{crisan2018smoothing} by using Gronwall's lemma and the  Burkholder-Davis-Gundy inequality a couple times.
\end{proof}

We now consider the differentiability of the generic process satisfying the linear equation in Lemma \ref{lm:linear_Yx}. To ease the burden on notation, we omit the $(t, x, [\theta])$ in $a_k$, and write $a_3\big|_{v=\theta'}$ to denote $a_k(s, x, [\theta], \theta')$ for instance.

\begin{prop}
\label{prop:differentiable_Yx}
Suppose that the process $Y^{x, [\theta]}(\bm(v))$ is as in Lemma \ref{lm:linear_Yx}. In addition to the assumptions of Lemma \ref{lm:linear_Yx}, we introduce the following differentiability assumptions:
\begin{enumerate}
\item[(a)] For $k=1,2,3$, all $(s, [\theta], \bm{v}) \in [0,T]\times \mcP_2(\RR^N)\times(\RR^N)^{\# \bm{v}}$ and each $p\ge 1$, $\RR^N \ni x\mapsto a_k(s, x, [\theta], \bm{v}) \in L^p(\Omega)$ is differentiable.
\item[(b)] For $k=1,2,3$, all $(s, [\theta], x) \in [0,T]\times \mcP_2(\RR^N)\times\RR^N$ and each $p\ge 1$, $(\RR^N)^{\# \bm{v}} \ni \bm{v}\mapsto a_k(s, x, [\theta], \bm{v}) \in L^p(\Omega)$ is differentiable.
\item[(c)] For all $(s, x, \bm{v}) \in [0,T]\times \RR^N \times(\RR^N)^{\# \bm{v}}$ the mapping $L^2(\Omega) \ni \theta \mapsto a_2(s, x, [\theta], \bm{v}) \in L^2(\Omega)$ is Fr\'echet differentiable.
\item[(d)] $a_k(s, x, [\theta], \bm{v}) \in \bD^{1, \infty}$ for $k=1,2,3$ and all $(s, x, [\theta], \bm{v}) \in [0,T]\times \mcP_2(\RR^N)\times(\RR^N)^{\# \bm{v}}$. Moreover, we assume the following estimates on the Malliavin derivatives hold.
$$\sup_{r\in[0,T]} \EE\bigg[ \sup_{s\in [0,T]} |\bm{D}^{\mc{H}}_r a_k(s, x, [\theta], \bm{v}) |^p \bigg] < \infty, \quad k=0,1,2,3.$$
\end{enumerate}
Then, for all $t\in [0,T]$ the following hold:
\begin{enumerate}
\item Under assumption (a), $x\mapsto Y_t^{x, [\theta]}(\bm{v})$ is differentiable in $L^p(\Omega)$ for all $p\ge 1$ and 
	$$ \partial_x Y_t^{x, [\theta]}(\bm{v}) :\overset{L^p}{=} \lim_{h\to 0} \frac{1}{|h|} \bigg( Y_t^{x+h, [\theta]}(\bm{v}) - Y_t^{x, [\theta]}(\bm{v}) \bigg), $$
	where the limit is taken in $L^p$ sense, satisfies
	\begin{align}
	\partial_x Y_t^{x, [\theta]}(\bm{v}) &= \sum_{i=0}^d \int_0^t \bigg\{ \partial_x a_1^i Y_s^{x, [\theta]}(\bm{v}) + a_1^i\partial_xY_s^{x, [\theta]}(\bm{v}) + \partial_x a_2^i \nonumber \\
	&\hspace{2em} + \EE'\bigg[ \partial_x a^i_3 \big|_{v=\theta'} (Y_s^{\theta', [\theta]})'(\bm{v}) + \sum_{r=1}^{\#\bm{v}} \partial_x a_3^i \big|_{v=v_r} (Y_s^{\theta', [\theta]})'(\bm{v}) \bigg] \bigg\}\ud B_s^i \nonumber
	\end{align}
\item Under assumption (b), $\bm{v} \mapsto Y_t^{x, [\theta]}(\bm{v})$ is differentiable in $L^p(\Omega)$ for all $p\ge 1$ and 
	$$ \partial_{\bm{v}} Y_t^{x, [\theta]}(\bm{v}) :\overset{{L^p}}{=} \lim_{h\to 0} \frac{1}{|h|} \bigg( Y_t^{x, [\theta]}(\bm{v}+h) - Y_t^{x, [\theta]}(\bm{v}) \bigg) $$
	satisfies
	\begin{align}
	\partial_{v_j} Y_t^{x, [\theta]}(\bm{v}) &= \sum_{i=0}^d \int_0^t \bigg\{ a_1^i \partial_{v_j} Y_s^{x, [\theta]}(\bm{v}) + \tila^i_1\partial_{v_j} \tilY_s(\bm{v}) + \partial_{v_j} a_2^i \nonumber \\
	&\hspace{2em} + \EE'\bigg[ \partial_{v} a_3^i \big|_{v=v_j} (Y_s^{v_j, [\theta]})'(\bm{v}) \bigg] + \EE'\bigg[ a^i_3\big|_{v=v_j} \partial_x (Y_s^{v_j, [\theta]})'(\bm{v})\nonumber \\
	&\hspace{2em}  + a_3^i\big|_{v=\theta'} \partial_{v_j} (Y_s^{\theta', [\theta]})'(\bm{v}) +\sum_{r=1}^{\#\bm{v}} a_3^i\big|_{v=v_r} \partial_{v_j} (Y_s^{v_r, [\theta]})'(\bm{v}) \bigg] \bigg\}\ud B_s^i. \nonumber
	\end{align}
\item Under assumption (a), (b) and (c), the maps $\theta \mapsto Y_t^{\theta, [\theta]}(\bm{v})$ and $\theta \mapsto Y_t^{x, [\theta]}(\bm{v})$ are Fr\'echet differentiable for all $(x, \bm{v}) \in \RR^N \times (\RR^N)^{\# \bm{v}}$, so $\partial_\mu Y_t^{x, [\theta]}(\bm{v})$ exists and it satisfies
\begin{align}
\partial_\mu Y_t^{x, [\theta]}(\bm{v}, \hat{v}) &= \sum_{i=0}^d \int_0^t \bigg\{ \partial_\mu a_1^i Y_s^{x, [\theta]}(\bm{v}) + a_1^i \partial_\mu Y_s^{x, [\theta]}(\bm{v}, \hat{v}) + \partial_\mu \tila_1^i \tilY_s(\bm{v}) + a_1^i \partial_\mu \tilY_s(\bm{v}, \hat{v})+ \partial_\mu a_2^i  \nonumber \\
&\hspace{2em} + \EE'\bigg[ \partial_\mu a_3^i (Y_s^{\theta', [\theta]})'(\bm{v}) + \partial_v a_3^i (Y_s^{\hat{v}, [\theta]})'(\bm{v}) + a_3^i \big|_{v=\theta'} \partial_\mu (Y_s^{\theta', [\theta]})'(\bm{v}, \hat{v}) \bigg] \nonumber \\
&\hspace{2em} + \EE'\bigg[ a_3^i \big|_{v=\hat{v}} \partial_x (Y_s^{\hat{v}, [\theta]})'(\bm{v}) + \sum_{r=1}^{\#\bm{v}} a_3^i \big|_{v=v_r}\partial_\mu (Y_s^{v_r, [\theta]})'(\bm{v}, \hat{v}) \bigg]\bigg\} \ud B_s^i. \nonumber
\end{align}

Moreover, we have the representation, for all $\gamma\in L^2(\Omega)$,
$$\mc{D}\bigg( Y_t^{\theta, [\theta]}(\bm{v}) \bigg)(\gamma) = \bigg( \partial_x Y_t^{x, [\theta]}(\bm{v}) \gamma + \EE''\big[ \partial_\mu Y_t^{x, [\theta]}(\bm{v}, \theta'') \gamma'' \big] \bigg)\bigg|_{x=\theta}.$$

\item Under assumption (d), $Y_t^{x, [\theta]} \in \bD^{1, \infty}$ and $\bm{D}^{\mc{H}}_r Y_t^{x, [\theta]}$ satisfies
\begin{align}
	\bm{D}^{\mc{H}}_r Y_t^{x, [\theta]}(\bm{v}) &=\bigg( a_1^j Y_r^{x, [\theta]} + \tila_1^j \tilY_r + a_2^j + \EE'\big[ a_3^j (Y_s^{x, [\theta]})'(\bm{v}) \big] \bigg)_{j=1,\dots,d} \nonumber \\
	&\hspace{2em} + \sum_{i=0}^d \int_0^t \bigg\{ \bm{D}^{\mc{H}}_r a_1^i Y_s^{x, [\theta]}(\bm{v}) +\bm{D}^{\mc{H}}_r \tila_1^i \tilY_s + a_1^i \bm{D}^{\mc{H}}_r Y_s^{x, [\theta]}(\bm{v}) + \tila_1^i \bm{D}^{\mc{H}}_r\tilY_s \nonumber \\
	&\hspace{2em} + \bm{D}^{\mc{H}}_r a_2^i + \EE'\big[  \bm{D}^{\mc{H}}_r a_3^i\big|_{v=\theta'} (Y_s^{x, [\theta]})'(\bm{v}) \big] \bigg\} \ud B^i_s. \nonumber
\end{align}
Moreover, the following bound holds:
\begin{equation}
	\sup_{r\le t} \EE\bigg[| \bm{D}^{\mc{H}}_r Y_t^{x, [\theta]}(\bm{v}) |^p \bigg] \le C \sup_{r\le t} \EE\bigg[ \sup_{r\le t\le T} \big(| \bm{D}^{\mc{H}}_r a_1 |^p +  | \bm{D}^{\mc{H}}_r \tila_1 |^p \big) \bigg]
\end{equation}
\end{enumerate}
\end{prop}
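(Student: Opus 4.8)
The plan is to derive all four statements by viewing each claimed derivative as the solution of a \emph{new} SDE that is again of the linear type \eqref{def:eq_linearderivative}, and then to invoke Lemma \ref{lm:linear_Yx} twice: once to produce and bound a candidate derivative process, and once---applied to the equation solved by the error between a difference quotient and this candidate---to obtain $L^p$ convergence. In each part I first verify that the new coefficients, built from $a_0,a_1,\tila_1,a_2,a_3$ and their $x$-, $\bm v$-, $\mu$-, or Malliavin derivatives, satisfy the boundedness hypotheses (1)--(3) of Lemma \ref{lm:linear_Yx}, so that the candidate equation is well posed with an $\mcS_T^p$ estimate. In part (1) the new equation keeps $a_1$ as its linear coefficient and has $\tila_1=a_3=0$, with all the $\partial_x a_k$-terms entering as a forcing $a_2$; the key point is that the copies $(Y_s^{\theta',[\theta]})'$ have $\theta'$-valued initial data, so $\partial_x$ hits only the coefficients and not the copy processes.

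Concretely for part (1), fix $[\theta]$ and $\bm v$ and set $Z_t^h:=|h|^{-1}(Y_t^{x+h,[\theta]}(\bm v)-Y_t^{x,[\theta]}(\bm v))$. Subtracting the two instances of \eqref{def:eq_linearderivative} and adding and subtracting frozen coefficients, $Z^h$ solves a linear SDE whose data converge in $L^p$, as $h\to 0$, to those of the candidate equation for $\partial_x Y$ by assumption (a). The difference between $Z^h$ and the candidate solution then solves an equation of the form \eqref{def:eq_linearderivative} whose inhomogeneous term tends to $0$ in $\mcS_T^p$, using (a) together with the $L^p$-continuity of $(t,x,[\theta],\bm v)\mapsto Y_t^{x,[\theta]}(\bm v)$ from Lemma \ref{lm:linear_Yx}; the a priori bound of that lemma then gives $\|Z^h-\partial_x Y\|_{\mcS_T^p}\to0$. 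Part (2) is proved the same way, differentiating in $\bm v$ instead; since $a_1,\tila_1$ carry no $\bm v$-argument the linear and neighborhood coefficients are unchanged, and the only extra bookkeeping is the self-referential term $\tila_1^i\partial_{v_j}\tilY_s(\bm v)$ and the copy terms $\partial_x(Y_s^{v_j,[\theta]})'$ produced by differentiating the initial datum $v_j$ of the copy.

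The hard part is part (3), the measure derivative, because perturbing $\theta$ simultaneously moves the initial condition, the law $[\theta]$ inside every coefficient, and each copy process $(Y^{\theta',[\theta]})'$, so the candidate is defined only implicitly through a coupled McKean--Vlasov system. I would follow the scheme in the proof of Proposition \ref{prop:first-order derivatives}(2): Fréchet differentiability of $\theta\mapsto Y_t^{x,[\theta]}(\bm v)$ and of $\theta\mapsto Y_t^{\theta,[\theta]}(\bm v)$ follows from \cite[Lemma 4.17]{chassagneux2014probabilistic} using assumption (c) and the already-established $x$-differentiability; I then write the candidate equation for $\partial_\mu Y_t^{x,[\theta]}(\bm v,\hat v)$, evaluate it at $\hat v=\theta''$, multiply by $\gamma''$ on an auxiliary space $(\Omega'',\mcF'',\PP'')$, and take $\EE''$. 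Comparing the resulting dynamics with that of the Fréchet derivative $\mc D(Y_t^{\theta,[\theta]}(\bm v))(\gamma)$ and invoking uniqueness for \eqref{def:eq_linearderivative} forces the stated representation of $\mc D(Y_t^{\theta,[\theta]}(\bm v))(\gamma)$ in terms of $\partial_x Y_t^{x,[\theta]}(\bm v)$ and $\EE''[\partial_\mu Y_t^{x,[\theta]}(\bm v,\theta'')\gamma'']$ at $x=\theta$. The crucial reduction is that $\tilY$ and $Y^\theta$ share the same law, so the neighborhood-derivative terms $\mc D\tilY_s(\gamma)$ and $\partial_\mu\tilY_s$ can be re-expressed distributionally through $\partial_\mu Y^{\theta',[\theta]}$, which is exactly what closes the fixed point; this matching step is where uniqueness must be applied most carefully and is the principal obstacle.

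Finally, for part (4) I would run a Picard iteration exactly as in the proof of Proposition \ref{prop:first-order derivatives}(3). Using the chain rule for the Malliavin derivative, closability of $\bm D^{\mc H}$, and the uniform bounds on the coefficients, each iterate lies in $\bD^{1,\infty}$, and applying $\bm D^{\mc H}_r$ to the Itô integrals reproduces the stated equation: the diagonal evaluation at $s=r$ yields the non-integral initial term $\big(a_1^jY_r^{x,[\theta]}+\tila_1^j\tilY_r+a_2^j+\EE'[a_3^j(Y_s^{x,[\theta]})'(\bm v)]\big)_{j}$, while the integral part inherits the $\bm D^{\mc H}a_k$ contributions. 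Here the projection annihilates $\bm D^{\mc H}\tilY$ (the neighborhood process being $\mc G$-measurable) and the independence of the copy space $\Omega'$ kills $\bm D^{\mc H}$ of the $(Y^{\theta',[\theta]})'$ terms, so only the direct forcing through $\bm D^{\mc H}a_1$ and $\bm D^{\mc H}\tila_1$ survives. Uniform $L^p(\Omega;H)$ bounds on the iterates, obtained from assumption (d) via the Burkholder--Davis--Gundy and Gronwall inequalities, show the iterates are Cauchy, giving $Y_t^{x,[\theta]}\in\bD^{1,\infty}$; the final estimate is then read off by applying the a priori bound of Lemma \ref{lm:linear_Yx} to the $\bm D^{\mc H}Y$-equation, with $\bm D^{\mc H}a_1$ and $\bm D^{\mc H}\tila_1$ playing the role of the forcing $a_2$.
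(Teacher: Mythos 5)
Your overall strategy coincides with the paper's: the paper's proof of this proposition is literally a pointer to Proposition \ref{prop:first-order derivatives} and to \cite[Proposition 6.8]{crisan2018smoothing}, and your parts (1)--(3) are a faithful expansion of exactly those arguments --- difference quotients whose error solves another linear equation of type \eqref{def:eq_linearderivative}, closed by the a priori bound of Lemma \ref{lm:linear_Yx}; Fr\'echet differentiability via \cite[Lemma 4.17]{chassagneux2014probabilistic}; and the auxiliary-space comparison of $\EE''[\partial_\mu Y_t^{x,[\theta]}(\bm{v},\theta'')\gamma'']$ with the Fr\'echet derivative followed by uniqueness. Part (4) also starts correctly: Picard iteration, the chain rule for $\bm{D}^{\mc{H}}$, and the observation that independence of the copy space $(\Omega',\mcF',\PP')$ kills $\bm{D}^{\mc{H}}$ of the terms $(Y^{\theta',[\theta]})'$.

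However, part (4) contains a genuine error. You claim that the projection annihilates $\bm{D}^{\mc{H}}\tilY$ ``the neighborhood process being $\mc{G}$-measurable,'' and conclude that only the forcing through $\bm{D}^{\mc{H}}a_1$ and $\bm{D}^{\mc{H}}\tila_1$ survives. This is unjustified in general, and it contradicts the very equation you are asked to derive, which explicitly retains both $\bm{D}^{\mc{H}}_r\tila_1^i\,\tilY_s$ and $\tila_1^i\,\bm{D}^{\mc{H}}_r\tilY_s$. What holds by construction of $K(\omega)=\langle \bm{D}\tilX_{t_i}(\omega), t_i\in\bQ_T\rangle^\perp$ is only $\bm{D}^{\mc{H}}\tilX=0$; that is the fact exploited in Proposition \ref{prop:first-order derivatives}(3), where the $\tilpartial V_i\,\bm{D}^{\mc{H}}\tilX_s$ term is absent from \eqref{def:sde_DX}. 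In Lemma \ref{lm:linear_Yx}, by contrast, $\tilY$ is only assumed to have the law of $Y^\theta$ and to be independent of $B$; in the intended applications it is a derivative process of the neighborhood, solving a linear SDE driven by the neighborhood's own Brownian motion and depending on the second-level neighborhood of the chain, hence it is not measurable with respect to $\mc{G}=\sigma(\tilX_s,\, 0\le s\le T)$, and $\bm{D}\tilY$ need not lie in $\langle\bm{D}\tilX_{t_i}\rangle$. Applying the product and chain rules honestly and keeping these terms produces exactly the stated equation; dropping them produces a different one. The same issue affects your closing estimate: the term $\tila_1^i\,\bm{D}^{\mc{H}}_r\tilY_s$ must be carried through the Gronwall/BDG argument (using the distributional symmetry of the chain) rather than discarded.
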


The limits in the above are taken in $L^p$ sense. When we say $k=1,2,3$ for the assumptions, we also means $\tila_1$.
\begin{proof}
See Proposition \ref{prop:first-order derivatives} and \cite[Proposition 6.8]{crisan2018smoothing} for the proof.
\end{proof}

We are now ready to prove Theorem \ref{thm:kusuoka}.
\begin{proof}[Proof of Theorem \ref{thm:kusuoka}]
The proof follows identically the proof of \cite[Theorem 3.2]{crisan2018smoothing}, where we apply Lemma \ref{lm:linear_Yx} and Proposition \ref{prop:differentiable_Yx}.
\end{proof}

\subsection{Integration by Parts Formulae} \label{sec: IBP}

Now we introduce some operators acting on the Kusuoka-Stroock processes. These will be used later in the integration by parts formulae. We first make the following common uniform ellipticity assumption.

\begin{assump}[Uniform Ellipticity]
\label{assump:ue}
Let $\sigma: [0,T]\times \RR^N \times \mcP_2(\RR^N) \times \RR^N \to \RR^{N\times d}$ be given by
$$\sigma(t, z, \mu, \tilde{z}) := [V_1(t, z, \mu, \tilde{z}), \dots, V_d(t, z, \mu, \tilde{z})].$$
We assume that there exists $\epsilon>0$ such that, for all $\xi \in \RR^N$, $z\in\RR^N$ and $\mu\in\mcP_2(\RR^N)$,
$$\xi^\top \sigma(t, z, \mu, \tilde{z})\sigma(t, z, \mu, \tilde{z})^\top \xi \ge \epsilon |\xi|^2. $$
\end{assump}

For a function $\Psi: [0,T]\times \RR^N\times \mcP_2(\RR^N) \to \bD^{n, \infty}$, the following operators acting on Kusuoka-Stroock processes in $\bK_r^q(\RR, n)$ with multi-index $\alpha=(i)$ and $(t, x, [\theta]) \in [0,T]\times \RR^N\times \mcP_2(\RR^N)$ are given by
\begin{align*}
I_{(i)}^1(\Psi)(t, x, [\theta]) &:= \frac{1}{\sqrt{t}} \delta_{\mc{H}}\bigg(r \mapsto \Psi(t, x, [\theta])\big( \sigma^\top \big( \sigma \sigma^\top \big)^{-1} (r, X_r^{x, \mu}, [X_r^{\theta}], \tilX_r)\partial_x X_r^{x, \mu} \big)_i \bigg) \nonumber \\
I_{(i)}^2(\Psi)(t, x, [\theta]) &:= \sum_{j=1}^N I^1_{(j)} \bigg( \big( \partial_x X_t^{x, \mu} \big)^{-1}_{j,i} \Psi(t, x, [\theta]) \bigg), \\
I_{(i)}^3(\Psi)(t, x, [\theta]) &:= I_{(i)}^1(\Psi)(t, x, [\theta]) + \sqrt{t}\partial^i\Psi(t, x, [\theta]), \\
\mcI_{(i)}^1(\Psi)(t, x, [\theta], v_1) &:= \frac{1}{\sqrt{t}}\delta_{\mc{H}}\bigg( r\mapsto \big(\sigma^\top \big(\sigma \sigma^\top\big)^{-1}(r, X_r^{x, \mu}, [X_r^\theta], \tilX_r) \\
&\hspace{4em} \partial_xX_r^{x, \mu} (\partial_x X_t^{x, \mu})^{-1} \partial_\mu X_t^{x, [\theta]}(v_1) \big)_{i} \Psi(t, x, [\theta]) \bigg), \\
\mcI_{(i)}^3(\Psi)(t, x, [\theta], v_1) &:= \mcI_{(i)}^1(\Psi)(t, x, [\theta], v_1) + \sqrt{t}(\partial_\mu \Psi)_i(t, x, [\theta], v_1).
\end{align*}

For a general multi-index $\alpha=(\alpha_1, \dots, \alpha_n)$, we inductively define
$$I_\alpha^1 := I^1_{\alpha_n} \circ I^1_{\alpha_{n-1}} \circ \cdots \circ I^1_{\alpha_1},$$
the definition of the other operators are analogue to $I^1_\alpha$. The following Proposition follows directly from our previous discussion and the definition of the Kusuoka-Stroock process.

\begin{prop}
\label{prop:general_kusuoka}
If $V_0,\dots, V_d \in \mcC^{k,k,k}_{b, \Lip}([0,T]\times \RR^N \times \mcP_2(\RR^N) \times \RR^N; \RR^N)$, Assumption \ref{assump:ue} holds and $\Psi\in \bK_r^q(\RR, n)$, then $I_\alpha^1(\Psi)$ and $I_\alpha^3(\Psi)$, are all well-defined for $|\alpha|\le (k\wedge n)$. $I_\alpha^2(\Psi), \mcI^1_\alpha(\Psi)$ and $\mcI^3_\alpha(\Psi)$ are well defined for $|\alpha|\le n\wedge (k-2)$. Moreover, 
\begin{align*}
	I_\alpha^1(\Psi), I_\alpha^3(\Psi) &\in \bK_r^{q+2|\alpha|}(\RR, (k\wedge n) -|\alpha|), \\
	I_\alpha^2(\Psi) &\in \bK_r^{q+3|\alpha|}(\RR, [n \wedge (k-2)] -|\alpha|), \\
	\mcI_\alpha^1(\Psi), \mcI_\alpha^3(\Psi) &\in \bK_r^{q+4|\alpha|}(\RR,  [n \wedge (k-2)] -|\alpha|).
\end{align*}
If $\Psi\in\bK_r^0(\RR, n)$ and $V_0, \dots, V_d$ are uniformly bounded, then
\begin{align*}
	I_\alpha^1(\Psi), I_\alpha^3(\Psi) &\in \bK_r^0(\RR, (k\wedge n) -|\alpha|), \\
	I_\alpha^2(\Psi) &\in \bK_r^{0}(\RR, [n \wedge (k-2)] -|\alpha|), \\
	\mcI_\alpha^1(\Psi), \mcI_\alpha^3(\Psi) &\in \bK_r^{0}(\RR,  [n \wedge (k-2)] -|\alpha|).
\end{align*}
\end{prop}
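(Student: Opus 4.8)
The plan is to reduce the statement to a single-index base case and then induct on $|\alpha|$, exploiting the composition structure $I^1_\alpha = I^1_{\alpha_n}\circ\cdots\circ I^1_{\alpha_1}$ (and its analogues for $I^2,I^3,\mcI^1,\mcI^3$). First I would record that every ingredient appearing inside these operators is itself a Kusuoka-Stroock process. By Theorem \ref{thm:kusuoka}, $X_t^{x,[\theta]}\in\bK_0^1(\RR^N,k)$ (and $\in\bK_0^0(\RR^N,k)$ when the $V_i$ are bounded). The first variation $\partial_x X$ solves the linear equation \eqref{def:first_order_x} with bounded coefficients and identity initial datum, hence lies in $\bK_0^0(\RR^{N\times N},k-1)$; its inverse $(\partial_x X)^{-1}$ solves the associated linear equation and so also lies in a class $\bK_0^0(\RR^{N\times N},\cdot)$; the term $\partial_\mu X$ is covered by Lemma \ref{lm:linear_Yx} together with Proposition \ref{prop:first-order derivatives}; and Assumption \ref{assump:ue} guarantees that $\sigma^\top(\sigma\sigma^\top)^{-1}$ is bounded with bounded derivatives, placing it in $\bK_0^0$.

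Second, I would invoke the algebraic closure properties of the classes $\bK_r^q$, exactly as in \cite{crisan2018smoothing}, but re-read through the partial Malliavin calculus: (i) products multiply, $\bK_{r_1}^{q_1}(\cdot,m)\cdot\bK_{r_2}^{q_2}(\cdot,m)\subseteq\bK_{r_1+r_2}^{q_1+q_2}(\cdot,m)$ by the Leibniz rule and H\"older's inequality; (ii) the divergence shifts the time-degree up by one at the cost of one order of regularity, $\delta_{\mcH}\colon\bK_r^q(H\otimes E,m+1)\to\bK_{r+1}^{q'}(E,m)$; and (iii) the spatial derivative $\partial^i$ acts within the class. The essential observation is that these transfer verbatim to the partial setting, because $\bm{D}^{\mcH}=\Proj_{\mcH}\bm{D}$ is a pointwise orthogonal projection, so $\|\bm{D}^{\mcH,(j)}F\|_H\le\|\bm{D}^{(j)}F\|_H$ and the norm $\|\cdot\|_{\bD_{\mcH}^{m,p}}$ is dominated (by H\"older, as remarked after the definition of $\bK_r^q$) by its full-Malliavin counterpart; moreover $\delta_{\mcH}$ is the genuine adjoint of $\bm{D}^{\mcH}$ through the integration-by-parts identity \eqref{def:eq_partialMalliavin_IBPF}, so the Meyer-type continuity estimates hold with the same constants.

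With these in hand, the base case $|\alpha|=1$ becomes a direct degree count. For $I^1_{(i)}(\Psi)$ the integrand $\Psi\cdot\sigma^\top(\sigma\sigma^\top)^{-1}\partial_x X$ accumulates growth degrees, $\delta_{\mcH}$ raises the time-degree by one, and the explicit prefactor $t^{-1/2}$ rebalances it, yielding $I^1_{(i)}(\Psi)\in\bK_r^{q+2}(\RR,(k\wedge n)-1)$; the same accounting produces the extra increment per step for $I^2$ (from the factor $(\partial_x X)^{-1}$) and for $\mcI^1,\mcI^3$ (from $\partial_\mu X$ together with the first-variation factors), giving $q+3$ and $q+4$ respectively. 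Iterating the single-step estimate along the composition then yields $I^1_\alpha(\Psi)\in\bK_r^{q+2|\alpha|}(\RR,(k\wedge n)-|\alpha|)$ and its analogues; the admissible ranges $|\alpha|\le k\wedge n$, respectively $|\alpha|\le n\wedge(k-2)$, are precisely the thresholds at which each composition still retains enough regularity for the next operator to be defined, the two extra lost orders for $I^2,\mcI^1,\mcI^3$ stemming from the simultaneous presence of $\partial_x X$ and $(\partial_x X)^{-1}$ (resp.\ $\partial_\mu X$), each only of class $\bK_0^{\cdot}(\cdot,k-1)$.

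I expect the main obstacle to be the second step, namely verifying that $\delta_{\mcH}$ obeys the same Sobolev mapping and Meyer-type inequalities on the Kusuoka-Stroock classes as the ordinary Skorohod integral $\delta$. This is where the frozen auxiliary process could in principle interfere, and the point to check carefully is that, because $\bm{D}^{\mcH}\tilX_t=0$ and $\mcH$ is the orthogonal complement of the directions generated by $\{\bm{D}\tilX_{t_i}\}$, the projection commutes suitably with the iterated derivatives and with the adjoint, so that the Nualart--Zakai calculus reproduces the estimates used in \cite{crisan2018smoothing}. A secondary technical point is justifying that $(\partial_x X)^{-1}$ is a genuine Kusuoka-Stroock process of the asserted regularity, which is what accounts for the drop from $k$ to $k-2$ in the admissible range for $I^2$, $\mcI^1$ and $\mcI^3$. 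Finally, the uniformly bounded case follows by the identical argument upon replacing $\bK_0^1$ by $\bK_0^0$ throughout, so that all growth-degree increments vanish and every output lands in $\bK_r^0$.
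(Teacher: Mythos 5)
Your proposal is correct and takes essentially the same route as the paper, which in fact gives no explicit proof at all but merely asserts that the proposition ``follows directly from our previous discussion and the definition of the Kusuoka-Stroock process'' --- that is, from Theorem \ref{thm:kusuoka}, Lemma \ref{lm:linear_Yx} and Proposition \ref{prop:differentiable_Yx}, together with the closure properties of the classes $\bK_r^q$ imported from \cite{crisan2018smoothing}, which transfer to the partial setting because $\bm{D}^{\mcH}$ is a pointwise orthogonal projection of $\bm{D}$ (so $\|\cdot\|_{\bD_{\mcH}^{m,p}}$ is dominated by the full Malliavin norm) and $\delta_{\mcH}$ is its adjoint via \eqref{def:eq_partialMalliavin_IBPF}. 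Your induction on $|\alpha|$ with the single-index degree count, and the two technical points you flag (Meyer-type estimates for $\delta_{\mcH}$ and the Kusuoka-Stroock regularity of $(\partial_x X)^{-1}$, which explains the drop to $k-2$), are precisely the details the paper leaves implicit.
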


From now on, the Integration by Parts Formulae (IBPF) follow in the same way as \cite[Sec 4.]{crisan2018smoothing} by replacing $\bm{D}, \delta$ by $\bm{D}^{\mc{H}}, \delta_{\mc{H}}$ and using integral by parts for this partial Malliavin derivative.

Integration by parts formulae in the space variable are established in the following Proposition.
\begin{prop}[Proposition 4.1, \cite{crisan2018smoothing}]
\label{prop:IBPF_space}
Let $f\in\mc{C}_b^\infty(\RR^N, \RR)$ and $\Psi \in \bK^q_r(\RR, n)$, then
\begin{enumerate}
    \item[1.] If $|\alpha| \le n\wedge k$, then
    $$\EE\big[ \partial_x^\alpha\big( f\big( X_t^{x, [\theta]} \big) \big)\Psi(t, x, [\theta]) \big] = t^{-|\alpha|/2} \EE\big[ f\big( X_t^{x, [\theta]}\big) I_\alpha^1(\Psi)(t, x, [\theta])  \big].$$
    
    \item[2.] If $|\alpha| \le n\wedge (k-2)$, then
    $$ \EE\big[ (\partial^\alpha f) \big( X_t^{x, [\theta]} \big)\Psi(t, x, [\theta]) \big] = t^{-|\alpha|/2} \EE\big[ f\big( X_t^{x, [\theta]}\big) I_\alpha^2(\Psi)(t, x, [\theta])  \big]. $$
    
    \item[3.] If $|\alpha| \le n\wedge k$, then
    $$\partial_x^\alpha \EE\big[ f\big( X_t^{x, [\theta]} \big)\Psi(t, x, [\theta]) \big] = t^{-|\alpha|/2} \EE\big[ f\big( X_t^{x, [\theta]}\big) I_\alpha^3(\Psi)(t, x, [\theta])  \big].$$ 
    
    \item[4.] If $|\alpha| + |\beta| \le n \wedge (k-2)$, then
    $$ \partial_x^\alpha\EE\big[ (\partial^\beta f) \big( X_t^{x, [\theta]} \big)\Psi(t, x, [\theta]) \big] = t^{-(|\alpha|+|\beta|)/2} \EE\big[ f\big( X_t^{x, [\theta]}\big)  I_\alpha^3\big((I^2_\beta\Psi)\big)(t, x, [\theta])  \big]. $$
\end{enumerate}
\end{prop}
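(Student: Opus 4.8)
The plan is to reduce all four identities to the single-index case $|\alpha|=1$ and then bootstrap to arbitrary multi-indices by induction, reading the operators $I_\alpha^1, I_\alpha^2, I_\alpha^3$ off as the iterated compositions by which they are defined. The proof mirrors \cite[Proposition 4.1]{crisan2018smoothing}, the only structural change being that the ordinary Malliavin pair $(\bm{D}, \delta)$ is replaced throughout by the partial pair $(\bm{D}^{\mc{H}}, \delta_{\mc{H}})$; what makes this replacement legitimate is a flow identity for $\bm{D}^{\mc{H}}$ that repairs \eqref{eq: question}.

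The crux is to establish, for $0\le r\le t$, the identity
$$\bm{D}^{\mc{H}}_r X_t^{x, [\theta]} = \partial_x X_t^{x, [\theta]}\big(\partial_x X_r^{x, [\theta]}\big)^{-1}\sigma\big(r, X_r^{x, [\theta]}, [X_r^\theta], \tilX_r\big).$$
Comparing \eqref{def:first_order_x} with \eqref{def:sde_DX}, both $\partial_x X^{x,[\theta]}$ and $\bm{D}^{\mc{H}}_r X^{x,[\theta]}$ solve the \emph{same} linear SDE with coefficient $\partial V_i(s, X_s^{x,[\theta]}, [X_s^\theta], \tilX_s)$, the former started from $Id_N$ at time $0$ and the latter from $\sigma(r,\ldots)$ at time $r$. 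The decisive point is that neither equation carries a $\tilpartial V_i$ contribution: the argument $\tilX_s$ does not depend on the spatial initial point $x$, the law $[X_s^\theta]$ is deterministic (so its Malliavin derivative vanishes), and, by Definition \ref{def:D_H}, $\bm{D}^{\mc{H}}\tilX = 0$. This is precisely the term that obstructs the identity for the full derivative $\bm{D}$. Invertibility of the Jacobian flow $\partial_x X_r^{x,[\theta]}$ is standard \cite{kunita1984stochastic}, while Assumption \ref{assump:ue} makes $(\sigma\sigma^\top)^{-1}$ well defined, and uniqueness of linear SDEs yields the displayed identity.

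For $\alpha=(i)$ I would combine the chain rule for $\bm{D}^{\mc{H}}$ with the flow identity. Setting $h_r := \big(\sigma^\top(\sigma\sigma^\top)^{-1}(r,\ldots)\partial_x X_r^{x,[\theta]}\big)_i$ and using $\sigma\sigma^\top(\sigma\sigma^\top)^{-1} = Id_N$, a direct contraction gives $\bm{D}^{\mc{H}}_r\big(f(X_t^{x,[\theta]})\big)\cdot h_r = \partial_{x_i}\big(f(X_t^{x,[\theta]})\big)$ for every $r\le t$, independent of $r$. Integrating over $r\in[0,t]$ (the left side vanishes for $r>t$) produces a factor $t$, and the partial integration-by-parts formula \eqref{def:eq_partialMalliavin_IBPF} moves the divergence onto $f$; since $I_{(i)}^1(\Psi)=t^{-1/2}\delta_{\mc{H}}(\Psi h)$, the two factors $t^{-1/2}$ exactly cancel the $t$, giving Part 1. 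Part 2 follows by inverting the chain rule, $(\partial_i f)(X_t) = \sum_j (\partial_x X_t)^{-1}_{j,i}\,\partial_{x_j}(f(X_t))$, and applying Part 1 with $\Psi$ replaced by $(\partial_x X_t)^{-1}_{j,i}\Psi$, which reproduces $I_{(i)}^2$. Part 3 follows by differentiating under the expectation—justified by the $L^p$-differentiability of $X^{x,[\theta]}$ and $\Psi$ in $x$ from Proposition \ref{prop:first-order derivatives} and the Kusuoka-Stroock bounds—so that $\partial_{x_i}\EE[f(X_t)\Psi] = \EE[\partial_{x_i}(f(X_t))\Psi] + \EE[f(X_t)\partial_{x_i}\Psi]$, the second term supplying the $\sqrt t\,\partial^i\Psi$ in $I_{(i)}^3$.

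For general multi-indices I would induct on $|\alpha|$: since $I_\alpha^1, I_\alpha^2, I_\alpha^3$ are iterated compositions, each step is the base case applied to the Kusuoka-Stroock process output of the previous step, with Proposition \ref{prop:general_kusuoka} guaranteeing that these outputs remain in the appropriate class $\bK_r^{\bullet}(\RR,\bullet)$—this keeps each $\Psi h$ in $\Dom\,\delta_{\mc{H}}$ and accounts for the degree constraints $|\alpha|\le n\wedge k$ (respectively $n\wedge(k-2)$). Part 4 is obtained by first applying Part 2 at fixed $x$ to absorb $\partial^\beta f$ into $I_\beta^2(\Psi)$ with factor $t^{-|\beta|/2}$, and then applying Part 3 to the outer $\partial_x^\alpha$, turning $I_\beta^2(\Psi)$ into $I_\alpha^3(I_\beta^2(\Psi))$ and the factor into $t^{-(|\alpha|+|\beta|)/2}$. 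The main obstacle is the flow identity of the second paragraph: one must verify carefully that the projection onto $\mc{H}$ annihilates exactly the auxiliary-process contribution that breaks \eqref{eq: question} for $\bm{D}$; once this is secured, everything else is the bookkeeping of \cite{crisan2018smoothing}.
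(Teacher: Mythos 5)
Your proposal is correct and takes essentially the same approach as the paper: the central flow identity relating $\bm{D}^{\mc{H}}_r X_t^{x,[\theta]}$ to $\partial_x X_t^{x,[\theta]}$ (your propagator form is an equivalent rearrangement of the paper's equation \eqref{def:eq_first_order_and_malliavin}, justified in both cases by the fact that \eqref{def:first_order_x} and \eqref{def:sde_DX} are the same linear SDE up to initial condition, with no $\tilpartial V_i$ term since $\bm{D}^{\mc{H}}\tilX = 0$), the same partial integration-by-parts computation \eqref{def:eq_partialMalliavin_IBPF} for $|\alpha|=1$, the same chain-rule inversion for Part 2 and differentiation under the expectation for Part 3, and the same iteration via Proposition \ref{prop:general_kusuoka}. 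The only difference is presentational: you spell out the uniqueness-of-linear-SDE justification for the flow identity in slightly more detail than the paper does.
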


\begin{proof}
\begin{enumerate}
    \item[1.]  First, we note that Equation \eqref{def:first_order_x} satisfied by $\partial_x X_t^{x, [\theta]}$ and Equation \eqref{def:sde_DX}
    satisfied by $\bm{D}^{\mc{H}}_r X_t^{x, [\theta]}$ are the same except their initial condition. It therefore follows from our discussion of partial Malliavin derivative that
\begin{equation}
\label{def:eq_first_order_and_malliavin}
\partial_x X_t^{x, [\theta]} = \bm{D}^{\mc{H}}_r X_t^{x, [\theta]} \sigma^\top\big(\sigma \sigma^\top\big)^{-1}(r, X_r^{x, [\theta]}, [X^\theta_r], \tilX_r) \partial_x X_r^{x, [\theta]}.
\end{equation}

We are then allowed to compute the followings for $f\in \mc{C}^\infty_b(\RR^N, \RR)$,
\begin{align*}
    \EE\big[ \partial_x\big( f\big( X_t^{x, [\theta]} \big) \big)\Psi(t, x, [\theta]) \big] &=
    \EE\big[ \partial f\big( X_t^{x, [\theta]} \big) \partial_x X_t^{x, [\theta]} \Psi(t, x, [\theta]) \big] \\
    &= \frac{1}{t}\EE\bigg[\int_0^t \partial f\big( X_t^{x, [\theta]} \big) \partial_x X_t^{x, [\theta]} \Psi(t, x, [\theta])\ud r \bigg] \\
    &= \frac{1}{t}\EE\bigg[\int_0^t \partial f\big( X_t^{x, [\theta]} \big) \bm{D}^{\mc{H}}_r X_t^{x, [\theta]} \sigma^\top\big(\sigma \sigma^\top\big)^{-1}(r, X_r^{x, [\theta]}, [X^\theta_r], \tilX_r)\\
    &\hspace{4em} \times \partial_x X_r^{x, [\theta]} \Psi(t, x, [\theta])\ud r \bigg]\\
    &= \frac{1}{t}\EE\bigg[\int_0^t  \bm{D}^{\mc{H}}_r f\big( X_t^{x, [\theta]} \big) \sigma^\top\big(\sigma \sigma^\top\big)^{-1}(r, X_r^{x, [\theta]}, [X^\theta_r], \tilX_r)\\
    &\hspace{4em}\times \partial_x X_r^{x, [\theta]} \Psi(t, x, [\theta])\ud r \bigg]\\
    &= \frac{1}{t}\EE\bigg[ f\big( X_t^{x, [\theta]} \big) \delta_{\mc{H}}\bigg( r \mapsto \Psi(t, x, [\theta]) \\
    &\hspace{4em}\times \big( \sigma^\top \big( \sigma \sigma^\top \big)^{-1} (r, X_r^{x, \mu}, [X_r^{\theta}], \tilX_r)\partial_x X_r^{x, \mu} \big)  \bigg)
    \bigg],
\end{align*}
where we have applied partial Malliavin calculus integration by parts from Equation \eqref{def:eq_partialMalliavin_IBPF} in the last equality. This proves the result for $|\alpha| = 1$. By Proposition \ref{prop:general_kusuoka}, $I_\alpha^1(\Psi) \in \bK_r^{q+2} (\RR, (k\wedge n) - 1)$ when $|\alpha|=1$. We can then repeat the above procedures iteratively to get to desired result.

\item[2.] By the chain rule, 
\begin{align*}
    \EE\big[ (\partial^i f) \big( X_t^{x, [\theta]} \big)\Psi(t, x, [\theta]) \big] &= \sum_{j=1}^N \EE\bigg[ \partial_{x_i}\big(f\big(X_t^{x, [\theta]}\big) \big)\bigg( \big( X_t^{x, [\theta]} \big)^{-1} \bigg)^{j,i}\Psi(t, x, [\theta]) \bigg] \\
    &= t^{-1/2} \sum_{j=1}^N\EE\bigg[ f\big(X_t^{x, [\theta]}\big) I^1_{(j)} \bigg(\bigg( \big( X_t^{x, [\theta]} \big)^{-1} \bigg)^{j,i}\Psi(t, x, [\theta])  \bigg) \bigg] \\
    &= t^{1/2} \EE\big[f\big(X_t^{x, [\theta]}\big) I^2_{(i)}(\Psi)(t, x, [\theta])\big],
\end{align*}
where we apply the result in part 1 to the second equality. From Proposition \ref{prop:general_kusuoka}, $I_{(i)}^2(\Psi) \in \bK^{q+3}_r(\RR, (n\wedge(k-2))-1)$, so since $|\alpha|\le (n\wedge(k-2))$, the proof follows from applying the same arguments for another $|\alpha|-1$ times.

\item[3.] By part 1 and direct computation,
\begin{align*}
    \partial_x^i \EE\big[ f\big( X_t^{x, [\theta]} \big)\Psi(t, x, [\theta]) \big] &= \EE\big[ \partial_x^i f\big( X_t^{x, [\theta]} \big)\Psi(t, x, [\theta]) + f\big( X_t^{x, [\theta]} \big)\partial_x^i \Psi(t, x, [\theta]) \big]\\
    &= t^{-1/2} \EE\bigg[f\big( X_t^{x, [\theta]} \big) \big\{I^1_{i}(\Psi)(t,x,[\theta])+\sqrt{t}\partial_x^i \Psi(t, x, [\theta])  \big\} \bigg],
\end{align*}
which proves the result for $|\alpha|=1$. Again, we have $I^3_{\alpha}(\Psi) \in \bK_r^{q+2}(\RR, (k\wedge n)-1)$ when $|\alpha| =1$. Then the proof follows from iterative implementation of the above procedure.

\item[4.] This part follows from parts 2 and 3 directly.
\end{enumerate}
\end{proof}

Similar to the integration by parts in space variable, we can also derive integration by parts in the measure variable as follows.

\begin{prop}[Proposition 4.2, \cite{crisan2018smoothing}]
\label{prop:IBPF_measure}
Let $f\in\mc{C}_b^\infty(\RR^N, \RR)$ and $\Psi \in \bK^q_r(\RR, n)$, then
\begin{enumerate}
    \item[1.] If $|\beta| \le n\wedge (k-2)$, then
    $$\EE\big[ \partial_\mu^\beta\big( f\big( X_t^{x, [\theta]} \big) \big)(\bm{v})\Psi(t, x, [\theta]) \big] = t^{-|\beta|/2} \EE\big[ f\big( X_t^{x, [\theta]}\big) \mc{I}_\beta^1(\Psi)(t, x, [\theta],\bm{v}) \big].$$
    
    \item[2.] If $|\beta| \le n\wedge (k-2)$, then
    $$\partial_\mu^\beta \EE\big[ f\big( X_t^{x, [\theta]} \big)\Psi(t, x, [\theta]) \big](\bm{v}) = t^{-|\beta|/2} \EE\big[ f\big( X_t^{x, [\theta]}\big) \mc{I}_\beta^3(\Psi)(t, x, [\theta], \bm{v})  \big].$$ 
    
    \item[3.] If $|\alpha|+|\beta|\le n\wedge (k-2)$, then
    $$\partial_\mu^\beta \EE\big[ (\partial^\alpha f) \big( X_t^{x, [\theta]} \big) \Psi(t,x,[\theta]) \big](\bm{v}) = t^{-(|\alpha|+|\beta|)/2} \EE\bigg[ f\big( X_t^{x, [\theta]} \big) \mc{I}^3_\beta\big( I^2_\alpha(\Psi) \big)(t,x,[\theta], \bm{v}) \bigg].$$
    
\end{enumerate}
\end{prop}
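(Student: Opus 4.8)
The plan is to follow the template of Proposition~\ref{prop:IBPF_space} line for line, replacing the space derivative $\partial_x$ by the measure derivative $\partial_\mu$ and, correspondingly, the key identity \eqref{def:eq_first_order_and_malliavin} by its measure analogue. First I would record the flow relation between the first variation process and the partial Malliavin derivative. Writing $J_{r,t} := \partial_x X_t^{x, [\theta]} (\partial_x X_r^{x, [\theta]})^{-1}$, both \eqref{def:first_order_x} and \eqref{def:sde_DX} are linear SDEs driven by the \emph{same} coefficient matrices $\partial V_i$; they differ only in the initial datum ($Id_N$ at time $0$ versus $\sigma(r,\dots)$ at time $r$). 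This matching of coefficients is exactly what the partial Malliavin derivative buys us: applying $\bm{D}^{\mc{H}}_r$ to \eqref{def:sde_random_coef_x} produces no $\tilpartial V_i\,\bm{D}^{\mc{H}}_r \tilX_s$ term because $\bm{D}^{\mc{H}}_r \tilX_s = 0$, and no measure-derivative term because the law $[X^\theta_s]$ is a deterministic parameter. By uniqueness of linear SDEs this gives $\bm{D}^{\mc{H}}_r X_t^{x,[\theta]} = J_{r,t}\,\sigma(r, X_r^{x,[\theta]}, [X_r^\theta], \tilX_r)$, whence $\bm{D}^{\mc{H}}_r X_t^{x,[\theta]}\,\sigma^\top(\sigma\sigma^\top)^{-1}(r,\dots)\,\partial_x X_r^{x,[\theta]} = J_{r,t}\,\partial_x X_r^{x,[\theta]} = \partial_x X_t^{x,[\theta]}$, which is \eqref{def:eq_first_order_and_malliavin}. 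Multiplying on the right by $(\partial_x X_t^{x,[\theta]})^{-1}\partial_\mu X_t^{x,[\theta]}(v_1)$ yields the measure analogue
$$\partial_\mu X_t^{x,[\theta]}(v_1) = \bm{D}^{\mc{H}}_r X_t^{x,[\theta]}\,\sigma^\top(\sigma\sigma^\top)^{-1}(r,\dots)\,\partial_x X_r^{x,[\theta]}\,(\partial_x X_t^{x,[\theta]})^{-1}\,\partial_\mu X_t^{x,[\theta]}(v_1),$$
which is precisely the integrand appearing in the definition of $\mcI^1_{(i)}(\Psi)$. (Invertibility of $\partial_x X_t^{x,[\theta]}$ for every $t$ is standard, as it is a fundamental matrix solution of a linear SDE whose inverse solves an associated linear SDE.)

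For part~1 with $|\beta|=1$, $\beta=(i)$, I would then run the same short computation as in Proposition~\ref{prop:IBPF_space}: apply the chain rule $\partial_\mu(f(X_t^{x,[\theta]}))(v_1) = \partial f(X_t^{x,[\theta]})\,\partial_\mu X_t^{x,[\theta]}(v_1)$, average the integrand over $r\in[0,t]$ by inserting the factor $t^{-1}\int_0^t \ud r$ (legitimate since $\partial_\mu X_t^{x,[\theta]}(v_1)$ does not depend on $r$), substitute the measure identity above, fold $\partial f(X_t^{x,[\theta]})\,\bm{D}^{\mc{H}}_r X_t^{x,[\theta]}$ into $\bm{D}^{\mc{H}}_r(f(X_t^{x,[\theta]}))$ by the chain rule for $\bm{D}^{\mc{H}}$, and finally discharge the resulting $\langle \bm{D}^{\mc{H}} f,\cdot\rangle$ pairing through the partial integration-by-parts formula \eqref{def:eq_partialMalliavin_IBPF}. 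The factors $t^{-1}\cdot\sqrt{t}$ combine to $t^{-1/2}$, giving exactly $t^{-1/2}\EE[f(X_t^{x,[\theta]})\,\mcI^1_{(i)}(\Psi)(t,x,[\theta],v_1)]$. To reach general $|\beta|$ I would iterate: Proposition~\ref{prop:general_kusuoka} guarantees $\mcI^1_{(i)}(\Psi)\in\bK^{q+4}_r(\RR,[n\wedge(k-2)]-1)$, so the hypotheses are restored after each application and the induction closes for $|\beta|\le n\wedge(k-2)$.

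Parts~2 and~3 are then formal. For part~2 I would differentiate under the expectation, $\partial_\mu\EE[f(X_t^{x,[\theta]})\Psi](v_1) = \EE[\partial_\mu(f(X_t^{x,[\theta]}))(v_1)\Psi] + \EE[f(X_t^{x,[\theta]})\,(\partial_\mu\Psi)(v_1)]$; part~1 handles the first term as $t^{-1/2}\mcI^1_{(i)}(\Psi)$, while the second contributes the $\sqrt{t}(\partial_\mu\Psi)_i$ summand, which is exactly the difference between $\mcI^3_{(i)}$ and $\mcI^1_{(i)}$, so the two combine into $t^{-1/2}\mcI^3_{(i)}(\Psi)$; iterating via Proposition~\ref{prop:general_kusuoka} covers higher $|\beta|$. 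Part~3 follows by composition: I first apply part~2 of Proposition~\ref{prop:IBPF_space} to rewrite $\EE[(\partial^\alpha f)(X_t^{x,[\theta]})\Psi]=t^{-|\alpha|/2}\EE[f(X_t^{x,[\theta]})\,I^2_\alpha(\Psi)]$, then apply part~2 of the present proposition with $I^2_\alpha(\Psi)$ in place of $\Psi$, and multiply the prefactors to get $t^{-(|\alpha|+|\beta|)/2}$.

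The main obstacle, and the only place where the argument genuinely departs from \cite{crisan2018smoothing}, is the measure identity in the first paragraph; everything downstream is bookkeeping. The delicate point is that the naive identity with the full Malliavin derivative $\bm{D}$ fails here (this is the content of Question~\ref{Q:integral_by_parts}), because $\bm{D}_r$ applied to $X^{x,[\theta]}$ generates an unwanted $\tilpartial V_i\,\bm{D}_r\tilX_s$ term from the neighborhood process. Replacing $\bm{D}$ by $\bm{D}^{\mc{H}}$ annihilates this term via $\bm{D}^{\mc{H}}_r\tilX_s=0$, which is precisely what forces the variation equation \eqref{def:sde_DX} and the first-variation equation \eqref{def:first_order_x} to share the same coefficients. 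I would therefore take care to justify the chain rule for $\bm{D}^{\mc{H}}$ (valid because $\Proj_{K(\omega)}$ is linear on $H$ fibrewise and commutes with the scalar multiplier $\partial f(X_t^{x,[\theta]}(\omega))$) and to verify the integrability conditions placing the integrands in $\Dom\,\delta_{\mc{H}}$, which are supplied by the Kusuoka--Stroock estimates of Proposition~\ref{prop:general_kusuoka}.
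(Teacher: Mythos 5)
Your proposal is correct and takes essentially the same approach as the paper: the paper's own proof is a one-line reference to the idea of Proposition~\ref{prop:IBPF_space} combined with the identity \eqref{def:eq_first_order_and_malliavin}, which is exactly what you carry out --- right-multiplying that identity by $(\partial_x X_t^{x,[\theta]})^{-1}\partial_\mu X_t^{x,[\theta]}(v_1)$ to recover the integrand of $\mcI^1_{(i)}$, averaging over $r\in[0,t]$, applying the chain rules and the partial integration-by-parts formula \eqref{def:eq_partialMalliavin_IBPF}, and closing the induction via Proposition~\ref{prop:general_kusuoka}, with parts 2 and 3 following by the product rule and composition with Proposition~\ref{prop:IBPF_space}. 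Your write-up simply makes explicit the details the paper leaves implicit.
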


\begin{proof}
The proofs use the same idea as Proposition \ref{prop:IBPF_space} and the Equation \eqref{def:eq_first_order_and_malliavin}.
\end{proof}

We now consider the integration by parts formulae for the derivatives of the mapping:
$$x\mapsto \EE[f(X_t^{x,\delta_x})].$$
Let us introduce the following operator acting on $\mc{K}_r^q(\RR, M)$, the set of the Kusuoka-Stroock processes do not depend on measure $\mu$. For $\alpha=(i)$,
$$J_{(i)} (\Phi)(t,x):= I_{(i)}^3(\Phi)(t,x,\delta_x) + \mc{I}_{(i)}^3(\Phi)(t,x,\delta_x)$$
and for $\alpha=(\alpha_1, \dots, \alpha_n)$,
$J_\alpha (\Phi) := J_{\alpha_n}\circ \dots \circ J_{\alpha_1}$.


\begin{thm}
\label{thm:IBPF_map}
Let $f\in \mc{C}^\infty_b(\RR^N;\RR)$. For all multi-indices $\alpha$ on $\{1,\dots,N\}$ with $|\alpha|\le k-2$,
$$\partial_x^\alpha \EE\big[ f\big( X_t^{x,\delta_x} \big) \big] = t^{-|\alpha|/2} \EE\big[ f\big( X_t^{x,\delta_x} \big) J_\alpha(1)(t,x) \big].$$
In particular, we get the following bound,
$$\big| \partial_x^\alpha \EE\big[ f\big( X_t^{x,\delta_x} \big) \big] \big| \le C \|f\|_\infty t^{-|\alpha|/2}(1+|x|)^{4|\alpha|}$$
\end{thm}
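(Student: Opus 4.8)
The plan is to argue by induction on $|\alpha|$, using at each step the Lions chain rule for differentiating through the Dirac mass together with the two integration-by-parts formulae already established in Propositions \ref{prop:IBPF_space} and \ref{prop:IBPF_measure}. The base case $|\alpha|=0$ is trivial. For the inductive step I would write $\alpha=(\alpha',i)$ with $\alpha'=(\alpha_1,\dots,\alpha_{n-1})$ and $i=\alpha_n$, so that $\partial_x^\alpha=\partial_{x_i}\partial_x^{\alpha'}$ and $J_\alpha=J_{(i)}\circ J_{\alpha'}$, and start from the inductive identity
$$\partial_x^\alpha \EE\big[f\big(X_t^{x,\delta_x}\big)\big] = \partial_{x_i}\Big( t^{-|\alpha'|/2}\EE\big[f\big(X_t^{x,\delta_x}\big)\,J_{\alpha'}(1)(t,x)\big]\Big),$$
differentiating the right-hand side in $x_i$.

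The central device is the chain rule for a map of the form $x\mapsto g(x,\delta_x)$. Viewing the Kusuoka-Stroock process $\Phi:=J_{\alpha'}(1)\in\mc{K}_0^{4|\alpha'|}(\RR,\cdot)$ as a (measure-independent) element of $\bK$, so that $\partial_\mu\Phi=0$ and hence $\mcI^3_{(i)}=\mcI^1_{(i)}$ when applied to it, and setting $g(x,\mu):=\EE[f(X_t^{x,\mu})\Phi(t,x)]$, the lift of $\mu\mapsto g(x,\mu)$ evaluated at the constant random variable $\equiv x$ yields the directional term $\partial_\mu g(x,\delta_x)(x)$, giving
$$\partial_{x_i}\big[g(x,\delta_x)\big] = (\partial_{x_i}g)(x,\delta_x) + \big(\partial_\mu g\big)(x,\delta_x)(x)_i.$$
I would then substitute the two integration-by-parts identities: Proposition \ref{prop:IBPF_space}(3) for the first summand, producing $t^{-1/2}\EE[f\,I^3_{(i)}(\Phi)(t,x,\delta_x)]$, and Proposition \ref{prop:IBPF_measure}(2) for the second, producing $t^{-1/2}\EE[f\,\mcI^3_{(i)}(\Phi)(t,x,\delta_x)]$ with the extra variable set to $v_1=x$. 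Summing these and invoking the definition of $J_{(i)}$ returns exactly $t^{-|\alpha|/2}\EE[f\,J_\alpha(1)(t,x)]$, which closes the induction.

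To ensure every operator applied is well-defined and that the regularity budget is not exhausted, I would track the Kusuoka-Stroock classes via Proposition \ref{prop:general_kusuoka}. Starting from $1\in\bK_0^0(\RR,k)$ and applying $J$ a total of $|\alpha|$ times, the binding constraint comes from $\mcI^3$, which requires the running smoothness index $n$ to satisfy $|\alpha|\le n\wedge(k-2)$; this is precisely why the hypothesis $|\alpha|\le k-2$ is imposed, and it guarantees $J_\alpha(1)$ is defined. Since $\mcI^3$ raises the growth exponent $q$ by $4$ per step while $I^3$ raises it only by $2$, the dominant contribution gives $J_\alpha(1)\in\mc{K}_0^{4|\alpha|}(\RR,\cdot)$. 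The final estimate then follows at once: taking $m=0$, $p=1$ in the Kusuoka-Stroock bound (with $r=0$, so no $t$-factor enters there) yields $\EE[|J_\alpha(1)(t,x)|]\le C(1+|x|)^{4|\alpha|}$, whence
$$\big|\partial_x^\alpha \EE[f(X_t^{x,\delta_x})]\big| = t^{-|\alpha|/2}\big|\EE[f(X_t^{x,\delta_x})J_\alpha(1)(t,x)]\big| \le C\|f\|_\infty\,t^{-|\alpha|/2}(1+|x|)^{4|\alpha|}.$$

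The step I expect to be the main obstacle is the rigorous justification of the chain rule through $\delta_x$, namely that differentiating $x\mapsto g(x,\delta_x)$ decomposes correctly into the direct space derivative $\partial_{x_i}g$ plus the measure derivative $\partial_\mu g$ evaluated at $v=x$. This demands care in the Lions-derivative framework: one must confirm that $\Phi=J_{\alpha'}(1)$, despite carrying a genuine $x$-dependence as a spatial and Brownian functional, may legitimately be frozen as measure-independent when $\mcI^3$ acts on it, and that the implicit evaluation $v_1=x$ encoded in the notation $\mcI^3_{(i)}(\cdot)(t,x,\delta_x)$ remains consistent under the composition $J_\alpha=J_{(i)}\circ J_{\alpha'}$. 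The remaining ingredients — the integration-by-parts substitutions and the bookkeeping of the Kusuoka-Stroock indices — are routine given Propositions \ref{prop:IBPF_space}, \ref{prop:IBPF_measure}, and \ref{prop:general_kusuoka}.
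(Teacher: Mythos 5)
Your proposal is correct and takes essentially the same route as the paper: the paper's proof likewise splits $\partial_{x_i}$ of $x\mapsto\EE\big[f\big(X_t^{x,\delta_x}\big)\big]$ into the frozen-measure space derivative plus the Lions measure derivative evaluated at $[\theta]=\delta_x$, $v=x$, applies Propositions \ref{prop:IBPF_space} and \ref{prop:IBPF_measure}, and iterates this $|\alpha|$ times. Your extra bookkeeping (the formal induction, the observation $\mcI^3_{(i)}=\mcI^1_{(i)}$ on measure-independent processes, and the index tracking giving $J_\alpha(1)\in\mc{K}_0^{4|\alpha|}$ via Proposition \ref{prop:general_kusuoka}) is consistent with what the paper records, e.g.\ in Corollary \ref{cor:IBPF_smoothdensity}.
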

\begin{proof}
Since $\delta_x$ depends on $x$, we have
$$\partial_x^i \EE\big[f\big( X_t^{x,\delta_x} \big)\big]= \partial_z^i \EE\big[ f\big( X_t^{x,\delta_x} \big) \big]\big|_{z=x} + \partial_\mu^i \EE\big[ f\big( X_t^{x,[\theta]} \big) \big](v)\big|_{[\theta]=\delta_x, v=x}, $$
then for $|\alpha|=1$ the result yields by Proposition \ref{prop:IBPF_space} and \ref{prop:IBPF_measure}. The proof is completed by repeating this procedure for another $|\alpha|-1$ times. 
\end{proof}

The following Corollary is useful for the smoothness of densities of directed chain SDEs.
\begin{cor}
\label{cor:IBPF_smoothdensity}
Let $f\in \mc{C}^\infty_b(\RR^N; \RR)$, $\alpha$ and $\beta$ are multi-indices on $\{1,\dots,N\}$ with $|\alpha|+|\beta|\le k-2$. Then,
$$\partial_x^\alpha \EE\big[ (\partial^\beta f)\big( X_t^{x,\delta_x} \big) \big] = t^{-\frac{|\alpha|+|\beta|}{2}}\EE\big[ f\big( X_t^{x,\delta_x} \big) I_\beta^2(J_\alpha(1))(t,x) \big]$$
and $I_\beta^2(J_\alpha(1)) \in \mc{K}_0^{4|\alpha|+3|\beta|}(\RR, k-2-|\alpha|-|\beta|)$.
\end{cor}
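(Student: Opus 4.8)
The plan is to peel off the two differential operators one at a time, applying $\partial_x^\alpha$ first and $\partial^\beta$ second, and reusing the two integration-by-parts formulae already established rather than redoing any Malliavin computation. The key observation is that $\partial^\beta f$ is again a function in $\mc{C}_b^\infty(\RR^N;\RR)$ whenever $f$ is, so it is a legitimate test function for Theorem \ref{thm:IBPF_map}.

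First I would apply Theorem \ref{thm:IBPF_map} with the test function $\partial^\beta f$ in place of $f$ and with the multi-index $\alpha$. This is permitted because $|\alpha|\le|\alpha|+|\beta|\le k-2$, and it gives
$$\partial_x^\alpha \EE\big[ (\partial^\beta f)\big( X_t^{x,\delta_x} \big) \big] = t^{-|\alpha|/2}\,\EE\big[ (\partial^\beta f)\big( X_t^{x,\delta_x} \big)\, J_\alpha(1)(t,x) \big].$$
At this stage I would record, by iterating Proposition \ref{prop:general_kusuoka} along the construction of $J_\alpha$ exactly as in the proof of Theorem \ref{thm:IBPF_map} (the growth exponent increasing by $4$ and the smoothness dropping by $1$ at each of the $|\alpha|$ steps, the binding constraint coming from the $\mcI^3$ part), that $J_\alpha(1)\in\mc{K}_0^{4|\alpha|}(\RR, k-2-|\alpha|)$.

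Next I would remove the remaining derivative $\partial^\beta$ by invoking Proposition \ref{prop:IBPF_space}, part 2, at the evaluation point $(x,\delta_x)$ with weight $\Psi:=J_\alpha(1)$. Since the operators $I^1,I^2,I^3$ involve only the spatial derivative $\partial_x$ together with the (partial) Malliavin structure, and no measure derivative, they restrict to the diagonal and the formula of part 2 holds verbatim for a $\mc{K}$-valued weight. The hypothesis of part 2 requires $|\beta|\le \big(k-2-|\alpha|\big)\wedge(k-2)=k-2-|\alpha|$, i.e.\ $|\alpha|+|\beta|\le k-2$, which is exactly the standing assumption. This yields
$$\EE\big[ (\partial^\beta f)\big( X_t^{x,\delta_x} \big)\, J_\alpha(1)(t,x) \big] = t^{-|\beta|/2}\,\EE\big[ f\big( X_t^{x,\delta_x} \big)\, I_\beta^2\big(J_\alpha(1)\big)(t,x) \big].$$
Combining the two displays and collecting the powers $t^{-|\alpha|/2}t^{-|\beta|/2}=t^{-(|\alpha|+|\beta|)/2}$ gives the claimed identity. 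The regularity assertion then follows by feeding $J_\alpha(1)\in\mc{K}_0^{4|\alpha|}(\RR,k-2-|\alpha|)$ into the $I^2_\beta$ bound of Proposition \ref{prop:general_kusuoka}: the growth exponent becomes $4|\alpha|+3|\beta|$ and the smoothness becomes $\big[(k-2-|\alpha|)\wedge(k-2)\big]-|\beta|=k-2-|\alpha|-|\beta|$, so $I_\beta^2(J_\alpha(1))\in\mc{K}_0^{4|\alpha|+3|\beta|}(\RR,k-2-|\alpha|-|\beta|)$.

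I expect the only genuinely delicate point to be the bookkeeping of the Kusuoka-Stroock indices together with the justification that Proposition \ref{prop:IBPF_space}, part 2, may be applied on the diagonal $[\theta]=\delta_x$ with a measure-independent weight; everything else is a two-line substitution. In particular one must check that the smoothness $k-2-|\alpha|$ left over after the first step is still at least $|\beta|$, which is precisely where the hypothesis $|\alpha|+|\beta|\le k-2$ is consumed.
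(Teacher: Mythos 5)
Your proposal is correct and takes essentially the same route as the paper, whose proof consists precisely of citing Theorem \ref{thm:IBPF_map} and Proposition \ref{prop:IBPF_space}: you apply the theorem to the test function $\partial^\beta f$ and then part 2 of the proposition on the diagonal $[\theta]=\delta_x$ with weight $J_\alpha(1)$, which is the only composition order consistent with the operator $I_\beta^2(J_\alpha(1))$ appearing in the statement. Your index bookkeeping via Proposition \ref{prop:general_kusuoka} (exponent $4|\alpha|+3|\beta|$, smoothness $k-2-|\alpha|-|\beta|$) also matches what the paper's framework yields.
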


\begin{proof}
The proof follows from Theorem \ref{thm:IBPF_map} and Proposition \ref{prop:IBPF_space}.
\end{proof}

\subsection{Smooth Densities}
We are now ready to prove the main theorem of this section.
\begin{thm}
\label{thm:smooth_density}
We assume Assumption \ref{assump:ue} holds and $V_0, \dots, V_d\in \mcC^{k,k,k}_{b, \Lip}([0,T]\times\RR^N\times \mcP_2(\RR^N)\times \RR^N; \RR^N)$. Let $\alpha, \beta$ be multi-indices on $\{1, \dots, N\}$ and let $k\ge |\alpha| + |\beta| +N+2$. Assume also the initial state for directed chain SDE is $\theta\equiv x$, i.e. $[\theta]=\delta_x$. Then the directed chain SDE \eqref{def:sde_random_coef} coincides with the alternative SDE \eqref{def:sde_random_coef_x}. For all $t\in[0,T]$, $X_t^{x, \delta_x}$ has a density $p(t, x, \cdot)$ such that $(x, z)\mapsto \partial_x^\alpha \partial_z^\beta p(t, x, z)$ exists and is continuous. Moreover, there exists a constant $C$ which depends on $T, N$ and bounds on the coefficients, such that for all $t\in (0,T]$
\begin{equation}
    |\partial_x^\alpha \partial_z^\beta p(t, x, z)| \le C(1 + |x|)^{4|\alpha| + 3|\beta|+3N} t^{-\frac{1}{2}(N+|\alpha| + |\beta|)},\,\,\,\, x\in \RR^N,\, z \in \RR^N. \label{def:density_upper_bound}
\end{equation}

If $V_0, \dots, V_d$ are bounded, then the following estimate holds, for all $t\in(0,T]$
$$|\partial_x^\alpha \partial_z^\beta p(t, x, z)| \le C t^{-\frac{1}{2}(N+|\alpha| + |\beta|)} \exp\big( -C\frac{|z-x|^2}{t} \big),\,\,\,\, x\in \RR^N,\, z \in \RR^N.$$
\end{thm}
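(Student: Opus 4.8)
The plan is to reduce the statement to the integration-by-parts machinery already assembled in Corollary \ref{cor:IBPF_smoothdensity} and Proposition \ref{prop:IBPF_space}, and then run the classical Kusuoka--Stroock argument that converts such integration-by-parts bounds into the existence, continuity and quantitative estimates for the density. The only point requiring attention is that every ingredient must be read off in the \emph{partial} Malliavin framework ($\bm{D}^{\mcH},\delta_{\mcH}$), but this is already encoded in the operators $I^2_\alpha, J_\alpha, \mcI^3_\alpha$ and in the key identity \eqref{def:eq_first_order_and_malliavin}, so no new calculus is needed. First I would record the preliminary reduction: with $\theta\equiv x$ deterministic and $[\theta]=\delta_x$ frozen, \eqref{def:sde_random_coef} and \eqref{def:sde_random_coef_x} coincide, $X_t^{x,\delta_x}$ solves an ordinary (measure-frozen) SDE, and by \eqref{def:eq_first_order_and_malliavin} its reduced covariance is $\sigma\sigma^\top$, which Assumption \ref{assump:ue} makes uniformly invertible. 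Applying part 2 of Proposition \ref{prop:IBPF_space} with $\Psi\equiv 1$ and the multi-index $(1,\dots,N)$ (legitimate since $N\le k-2$) gives, for every $f\in\mcC_b^\infty$, the identity $\EE[(\partial^{(1,\dots,N)}f)(X_t^{x,\delta_x})]=t^{-N/2}\EE[f(X_t^{x,\delta_x})\,I^2_{(1,\dots,N)}(1)(t,x)]$. Approximating the product Heaviside $H_z(y)=\prod_{j}\I_{\{y_j>z_j\}}$ by smooth functions, passing to the limit by dominated convergence (using the $L^2$ bound on the weight), and testing against $g\in\mcC_b$ shows that $X_t^{x,\delta_x}$ has a density given by $p(t,x,z)=(-1)^N t^{-N/2}\EE[\I_{\{X_t^{x,\delta_x}\ge z\}}\,I^2_{(1,\dots,N)}(1)(t,x)]$.

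Next I would produce the derivatives. By Corollary \ref{cor:IBPF_smoothdensity}, $\partial_x^\alpha\EE[(\partial^\beta f)(X_t^{x,\delta_x})]=t^{-(|\alpha|+|\beta|)/2}\EE[f(X_t^{x,\delta_x})\,W(t,x)]$ with $W:=I^2_\beta(J_\alpha(1))\in\mc{K}_0^{4|\alpha|+3|\beta|}(\RR,k-2-|\alpha|-|\beta|)$. The hypothesis $k\ge N+|\alpha|+|\beta|+2$ is precisely what forces the remaining smoothness index $k-2-|\alpha|-|\beta|$ to be at least $N$, so that $I^2_{(1,\dots,N)}(W)$ is well defined by Proposition \ref{prop:general_kusuoka} and lies in $\mc{K}_0^{4|\alpha|+3|\beta|+3N}(\RR,k-2-|\alpha|-|\beta|-N)$. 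Feeding this weight through the same Heaviside approximation as above yields the master representation $\partial_x^\alpha\partial_z^\beta p(t,x,z)=(-1)^{N}t^{-(N+|\alpha|+|\beta|)/2}\EE[\I_{\{X_t^{x,\delta_x}\ge z\}}\,I^2_{(1,\dots,N)}(W)(t,x)]$. Continuity of $(x,z)\mapsto\partial_x^\alpha\partial_z^\beta p$ then follows because the weight is continuous in $x$ in every $L^p$ by the Kusuoka--Stroock property, and because the law of $X_t^{x,\delta_x}$ has no atoms, so the indicator gives a $z$-continuous contribution via dominated convergence.

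The quantitative bounds drop out of this representation. Bounding $\I\le 1$ and invoking the Kusuoka--Stroock property with $r=0$ and $q=4|\alpha|+3|\beta|+3N$ gives $|\partial_x^\alpha\partial_z^\beta p(t,x,z)|\le t^{-(N+|\alpha|+|\beta|)/2}\,\|I^2_{(1,\dots,N)}(W)(t,x)\|_{1}\le C(1+|x|)^{4|\alpha|+3|\beta|+3N}\,t^{-(N+|\alpha|+|\beta|)/2}$, which is \eqref{def:density_upper_bound}. When $V_0,\dots,V_d$ are bounded, Theorem \ref{thm:kusuoka} places $X_t^{x,\delta_x}\in\bK_0^0$, so by the bounded case of Proposition \ref{prop:general_kusuoka} all the weights lie in $\mc{K}_0^0$ and the polynomial factor disappears, leaving $C\,t^{-(N+|\alpha|+|\beta|)/2}$. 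To upgrade this to the off-diagonal Gaussian factor I would localize: since $\EE[I^2_{(1,\dots,N)}(W)]=0$ (integration by parts of a constant test function), one may replace $\I_{\{X\ge z\}}$ by whichever of $\I_{\{X\ge z\}}$, $\I_{\{X<z\}}$ sits on the side away from $x$, estimate it by Cauchy--Schwarz against $\PP(X_t^{x,\delta_x}\in\cdot)$, and use the Bernstein tail $\PP(|X_t^{x,\delta_x}-x|\ge\rho)\le C\exp(-c\rho^2/t)$, valid because drift and diffusion are bounded, to obtain $\exp(-C|z-x|^2/t)$.

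The main obstacle is the passage from the smooth-$f$ integration-by-parts identity to the Heaviside indicator, that is, the rigorous construction of the density and the verification that the iterated partial-Malliavin weight $I^2_{(1,\dots,N)}(W)$ genuinely represents $\partial_x^\alpha\partial_z^\beta p$ pointwise. This is the Kusuoka--Stroock density lemma, and the work lies in checking that it survives verbatim with $\delta_{\mcH}$ in place of $\delta$; it does, because the two inputs the lemma actually uses are the partial integration-by-parts formula \eqref{def:eq_partialMalliavin_IBPF} and the nondegeneracy of the covariance, and Assumption \ref{assump:ue} supplies the latter through invertibility of $\sigma\sigma^\top$ rather than of the full Malliavin matrix. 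The second delicate point is the bounded-coefficient Gaussian estimate, where the crude $L^1$ bound must be refined into genuine off-diagonal decay by the localization sketched above.
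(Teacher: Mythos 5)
Your proposal is correct and follows essentially the same route as the paper: the paper's proof simply defers verbatim to Theorem 6.1 of \cite{crisan2018smoothing}, applied with the partial-Malliavin integration-by-parts formulae of Corollary \ref{cor:IBPF_smoothdensity}, together with Lemma 3.1 of \cite{taniguchi1985} for the bounded-coefficient Gaussian estimate. Your explicit reconstruction --- Heaviside approximation to build the density, the weight $I^2_{(1,\dots,N)}\big(I^2_\beta(J_\alpha(1))\big)$ giving the exponent $4|\alpha|+3|\beta|+3N$ and the power $t^{-(N+|\alpha|+|\beta|)/2}$, and the Cauchy--Schwarz/tail-estimate localization (which, carried out coordinate-by-coordinate over orthants rather than with the two sets $\{X\ge z\}$, $\{X<z\}$, is exactly the content of Taniguchi's lemma) --- is precisely the argument the paper cites.
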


\begin{proof}
The proof is verbatim to Theorem 6.1 of \cite{crisan2018smoothing} by applying our integration by parts formulae established in Corollary \ref{cor:IBPF_smoothdensity} and Lemma 3.1 in \cite{taniguchi1985}.
\end{proof}

Theorem \ref{thm:smooth_density} presents the smoothness result for $X_t^{x, \delta_x}$ and it can be generalized to $X_t^\theta$ with an general initial distribution $[\theta]$.

\begin{cor}
\label{cor:smooth_density_with_general_initial}
Suppose Assumption \ref{assump:ue} holds and $V_0, \dots, V_d\in \mcC^{k,k,k}_{b, \Lip}([0,T]\times\RR^N\times \mcP_2(\RR^N)\times \RR^N; \RR^N)$. Let $\theta$ be a random variable in $\RR^N$ with finite moments of all orders. For any multi-index $\beta$ on $\{1, \dots, N\}$ such that $k\ge |\beta| +N+2$, we have that for all $t\in[0,T]$, $X_t^{\theta}$ has a density $p_\theta(t, \cdot)$ such that $z\mapsto \partial_z^\beta p_\theta(t, z)$ exists and is continuous.
\end{cor}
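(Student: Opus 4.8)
The plan is to disintegrate $X_t^\theta$ over its random initial value and to transfer the $z$-regularity of the fixed-start density to the averaged one. First I would record the disintegration identity. Since $\theta$ is independent of $B$ and of the neighborhood process $\tilX$, conditioning on $\{\theta = x\}$ and using the pathwise identification \eqref{eq:sde_random_coef_x3} shows that the conditional law of $X_t^\theta$ given $\theta=x$ equals the law of $X_t^{x,[\theta]}$, so for every $f \in \mc{C}_b^\infty(\RR^N;\RR)$,
$$\EE\big[f(X_t^\theta)\big] = \int_{\RR^N} \EE\big[f(X_t^{x,[\theta]})\big]\,[\theta](\ud x).$$
Here all the fixed-start processes $X_t^{x,[\theta]}$ share the common measure flow $[X_s^\theta]$ determined by $[\theta]$, so the average is over the initial point only.

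Next I would establish the $z$-regularity of each fixed-start law, keeping $[\theta]$ general rather than specializing to $\delta_x$ as in Theorem \ref{thm:smooth_density}. Applying Proposition \ref{prop:IBPF_space}(2) with $\Psi \equiv 1 \in \bK_0^0(\RR, k)$ gives, for $|\beta| \le k-2$,
$$\EE\big[(\partial^\beta f)(X_t^{x,[\theta]})\big] = t^{-|\beta|/2}\,\EE\big[f(X_t^{x,[\theta]})\, I_\beta^2(1)(t,x,[\theta])\big],$$
where $I_\beta^2(1) \in \bK_0^{3|\beta|}(\RR, (k-2)-|\beta|)$ by Proposition \ref{prop:general_kusuoka}. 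Iterating these formulae up to order $N+|\beta|$ and invoking Lemma 3.1 of \cite{taniguchi1985} exactly as in the proof of Theorem \ref{thm:smooth_density}, which is permitted since $(k-2)-|\beta| \ge N$ under the hypothesis $k \ge |\beta|+N+2$, shows that $X_t^{x,[\theta]}$ admits a density $p(t,x,[\theta],z)$ for which $z \mapsto \partial_z^\beta p(t,x,[\theta],z)$ exists and is continuous, jointly measurable in $(x,z)$, and obeys a Kusuoka--Stroock bound of the form
$$\big|\partial_z^\beta p(t,x,[\theta],z)\big| \le C\,(1+|x|)^{q}\,t^{-(N+|\beta|)/2}$$
for some exponent $q=q(\beta,N)$, with $[\theta]$ frozen so that $\|\theta\|_2$ enters only as a fixed constant.

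Finally I would assemble the averaged density. Fubini's theorem, valid because each $p(t,x,[\theta],\cdot)$ is a nonnegative probability density so the double integral against bounded $f$ is finite, turns the disintegration identity into
$$p_\theta(t,z) = \int_{\RR^N} p(t,x,[\theta],z)\,[\theta](\ud x).$$
Because $\theta$ has finite moments of all orders, the polynomial factor $(1+|x|)^{q}$ is $[\theta]$-integrable, so the bound above is a $[\theta]$-integrable dominating function; dominated convergence then licenses differentiation under the integral sign,
$$\partial_z^\beta p_\theta(t,z) = \int_{\RR^N} \partial_z^\beta p(t,x,[\theta],z)\,[\theta](\ud x),$$
and the same domination, together with continuity of $z\mapsto \partial_z^\beta p(t,x,[\theta],z)$, yields continuity of $\partial_z^\beta p_\theta(t,\cdot)$. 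The main obstacle is the second step: one must carry the measure argument $[\theta]$ generally through the integration-by-parts machinery, which is available since Propositions \ref{prop:IBPF_space} and \ref{prop:general_kusuoka} are already stated for general $[\theta]$, and above all track the explicit polynomial-in-$|x|$ growth of the Kusuoka--Stroock bound on $\partial_z^\beta p(t,x,[\theta],z)$. The finite-moment hypothesis on $\theta$ is precisely what converts this growth into an integrable dominating function and thereby permits passing $\partial_z^\beta$ through the initial-law average.
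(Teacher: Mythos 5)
Your proof is correct, and its skeleton is the same as the paper's: disintegrate the law of $X_t^\theta$ over the initial value, dominate the $z$-derivatives of the fixed-start density by a polynomial in $|x|$ times $t^{-(N+|\beta|)/2}$, and use the finite moments of $\theta$ together with dominated convergence to pass $\partial_z^\beta$ through the average and obtain continuity. Where you genuinely diverge is in which fixed-start density you bound and how. The paper's one-sentence proof takes the expectation of the bound \eqref{def:density_upper_bound} from Theorem \ref{thm:smooth_density}; read literally, that bound concerns the density of $X_t^{x,\delta_x}$, i.e.\ the system whose measure flow is started at $\delta_x$, whereas the correct disintegration of $X_t^\theta$ --- as you note via \eqref{eq:sde_random_coef_x3} and the independence of $\theta$ from $(B,\tilX)$ --- involves the conditional densities of $X_t^{x,[\theta]}$, whose coefficients carry the common measure flow $[X_s^\theta]$. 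Your route handles exactly this point: instead of quoting \eqref{def:density_upper_bound}, you re-run the density argument for the fixed-start process with general measure argument, using Proposition \ref{prop:IBPF_space}(2) with $\Psi\equiv 1$, the Kusuoka--Stroock class bookkeeping of Proposition \ref{prop:general_kusuoka}, and Taniguchi's lemma; this is legitimate because those results are stated for general $[\theta]$, and since only $z$-derivatives of the density are needed (no $x$- or $\mu$-derivatives), the budget $k-2\ge N+|\beta|$ granted by the hypothesis $k\ge|\beta|+N+2$ suffices. What your version buys is rigor at the spot where the paper is elliptic; what the paper's shortcut buys is brevity, and its bound is in any case recovered by your argument with the same polynomial growth in $|x|$ once $\|\theta\|_2$ is frozen into the constant.
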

\begin{proof}
    The proof is done by taking expectation on both sides of the inequality \eqref{def:density_upper_bound} with respect to the initial distribution $\theta$ and applying dominated convergence theorem, where we use the assumption that $\theta$ has finite moments. 
\end{proof}

The above existence and smoothness results on the marginal density of a single object can be extended to the joint distribution for any number of adjacent particles. Namely, for a fixed integer $m \ge 1$, we may construct the system of stochastic processes $(\widetilde{X}^{0}_{\cdot}, \widetilde{X}^1_{\cdot}, \widetilde{X}^2_{\cdot},\dots, \widetilde{X}^m_{\cdot})$ such that  $(\widetilde{X}^{m-1}_{\cdot}, \widetilde{X}^m_{\cdot})\equiv (X^\theta_\cdot, \widetilde{X}_{\cdot})$ in  \eqref{eq:1.1}, and $\widetilde{X}^i_{\cdot}$ depends on the adjacent process $\widetilde{X}^{i+1}_{\cdot}$ and Brownian motion $\widetilde{B}^i_\cdot$, independent of $\widetilde{X}^{i+1}$, in the same fashion as of $(X^{\theta}_{\cdot}, \widetilde{X}_{\cdot})$ in \eqref{eq:1.1} for $i = 0, \ldots , m-1$. 

\begin{cor} \label{cor:coupled_density}
Suppose Assumption \ref{assump:ue} holds and $V_0, \dots, V_d\in \mcC^{k,k,k}_{b, \Lip}([0,T]\times\RR^N\times \mcP_2(\RR^N)\times \RR^N; \RR^N)$ and $\theta$ has finite moments. Then the joint density of the process $(X^{\theta}_{\cdot}, \widetilde{X}^1_{\cdot}, \widetilde{X}^2_{\cdot},\dots, \widetilde{X}^m_{\cdot})$ exists and is continuous at any $t\in[0,T]$, where $\widetilde{X}^1_{\cdot}\equiv \widetilde{X}_{\cdot}$ and $\widetilde{X}^i_{\cdot}$ depends on $\widetilde{X}^{i+1}_{\cdot}$ in the same fashion as of $(X^{\theta}_{\cdot}, \widetilde{X}_{\cdot})$ in \eqref{eq:1.1}.
\end{cor}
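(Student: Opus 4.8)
The plan is to recognize the whole window $(X^\theta_\cdot, \tilX^1_\cdot, \dots, \tilX^m_\cdot)$ as a single directed chain SDE of the form \eqref{def:sde_random_coef}--\eqref{def:sde_random_coef_constrain}, but in the enlarged state space $\RR^{N(m+1)}$, and then invoke Corollary \ref{cor:smooth_density_with_general_initial} in that higher dimension. Write $\mbX_\cdot := (X^\theta_\cdot, \tilX^1_\cdot, \dots, \tilX^m_\cdot)$ for the stacked process and let $\pi^i : \RR^{N(m+1)}\to\RR^N$ denote the projection onto the $i$-th block ($i=0,\dots,m$), with $\pi^i_\ast$ the induced pushforward on $\mcP_2$. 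The chain structure says that the $i$-th block of $\mbX$ is driven by the adjacent block together with the independent Brownian motion $\tilde B^i$; the only driver lying outside the window is the external neighbor $\tilX^{m+1}$ of $\tilX^m$. Setting $\widetilde{\mbX}_\cdot := (\tilX^1_\cdot, \dots, \tilX^m_\cdot, \tilX^{m+1}_\cdot)$ to be the one-step shift of the window, the block $\pi^i\widetilde{\mbX}$ is exactly the neighbor of $\pi^i\mbX$, so $\mbX$ solves a directed chain SDE with state $\mbX$ and neighbor $\widetilde{\mbX}$, driven by the $d(m+1)$ independent Brownian motions $\{\tilde B^{i,j}\}$, whose drift and diffusion coefficients $\bar V_j$ act blockwise by $(\bar V_j)^{(i)} = V_j(s, \pi^i\mbx, \pi^i_\ast\mu, \pi^i\tilde{\mbx})$. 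By the shift-invariance of the stationary chain one has $[\widetilde{\mbX}_\cdot] = [\mbX_\cdot]$, so the distributional constraint \eqref{def:sde_random_coef_constrain} holds in $\RR^{N(m+1)}$.

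Next I would verify that $\bar V_0, \dots, \bar V_{d(m+1)}$ belong to $\mcC^{k,k,k}_{b,\Lip}([0,T]\times\RR^{N(m+1)}\times\mcP_2(\RR^{N(m+1)})\times\RR^{N(m+1)};\RR^{N(m+1)})$. Because each block of $\bar V_j$ depends on $\mbx$ and $\tilde{\mbx}$ only through its own coordinate projection, the Euclidean derivatives $\partial_x^\gamma$ and $\tilpartial^{\tilde\gamma}$ of $\bar V_j$ are block-diagonal and inherit boundedness, the Lipschitz property and linear growth directly from the corresponding derivatives of $V_j$ in Definition \ref{def:Ckkk}. The measure dependence enters through the linear projection $\mu \mapsto \pi^i_\ast\mu$, whose lift $X'\mapsto \pi^i X'$ on $L^2$ is linear; hence the Lions derivatives $\partial_\mu^\alpha \bar V_j$ are obtained from those of $V_j$ by the chain rule for derivatives in $\mcP_2$, and since $\pi^i$ is $1$-Lipschitz for $W_2$, every bound, Lipschitz estimate and growth condition survives. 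Thus $\bar V_j \in \mcC^{k,k,k}_{b,\Lip}$.

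The crucial structural input is uniform ellipticity of the stacked system, and here the independence of the driving Brownian motions is what makes everything work. The stacked diffusion matrix $\bar\sigma$ is block-diagonal, with $i$-th $N\times d$ block equal to $\sigma(s, \pi^i\mbx, \pi^i_\ast\mu, \pi^i\tilde{\mbx})$ and vanishing off-diagonal blocks, so that
\begin{equation}
\bar\sigma\bar\sigma^\top = \mathrm{diag}\big(\sigma\sigma^\top, \dots, \sigma\sigma^\top\big).
\end{equation}
Applying Assumption \ref{assump:ue} to each diagonal block gives $\xi^\top \bar\sigma\bar\sigma^\top\xi \ge \epsilon|\xi|^2$ for every $\xi\in\RR^{N(m+1)}$, which is precisely the uniform ellipticity assumption for the enlarged directed chain SDE.

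With all hypotheses verified --- the stacked initial state inheriting finite moments of all orders from $\theta$ --- Corollary \ref{cor:smooth_density_with_general_initial} (equivalently Theorem \ref{thm:smooth_density} with $\alpha=0$), applied in dimension $N(m+1)$, yields that $\mbX_t = (X^\theta_t, \tilX^1_t, \dots, \tilX^m_t)$ admits a density that is continuous in the space variable, which is exactly the asserted joint density. I expect the main obstacle to be bookkeeping rather than conceptual: one must track that the measure derivatives genuinely transport through the projections $\pi^i_\ast$ without degrading constants, and --- since the smoothness threshold of Corollary \ref{cor:smooth_density_with_general_initial} scales with the ambient dimension --- that the regularity order is large enough after inflation, i.e. $k \ge N(m+1)+2$; both are routine consequences of the blockwise structure set up above.
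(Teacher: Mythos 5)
Your overall strategy --- stack the window into a single directed chain SDE on $\RR^{N(m+1)}$, check blockwise that the coefficients stay in $\mcC^{k,k,k}_{b,\Lip}$, observe that $\bar\sigma\bar\sigma^\top$ is block-diagonal and uniformly elliptic, and then invoke Corollary \ref{cor:smooth_density_with_general_initial} in the enlarged dimension --- is exactly the paper's strategy. But your choice of neighborhood process contains a genuine gap. You take the \emph{one-step shift} $\widetilde{\mbX}=(\tilX^1,\dots,\tilX^{m+1})$, so that block $i$ of the neighbor is literally the neighbor of block $i$ of the state. The directed chain framework \eqref{def:sde_random_coef}--\eqref{def:sde_random_coef_constrain}, however, requires the neighborhood process to be \emph{independent of the Brownian motions driving the state} (this is stated as part of the setup and is used in the fixed-point argument of Proposition \ref{prop:sol of directed chain sde}). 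With your choice this fails badly: $\widetilde{\mbX}$ contains $\tilX^1,\dots,\tilX^m$, which are driven by precisely the Brownian motions $\widetilde{B}^1,\dots,\widetilde{B}^m$ that also drive blocks $1,\dots,m$ of $\mbX$. So the stacked system is not a directed chain SDE in the paper's sense, and Corollary \ref{cor:smooth_density_with_general_initial} cannot be applied to it.

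The failure is not a formality, because the smoothness machinery degenerates under your choice. In the partial Malliavin construction, $\mcG$ is generated by the neighborhood process and $\mcH$ is the orthogonal complement of the span of its Malliavin derivatives; the argument rests on $\bm{D}^{\mcH}(\text{neighbor})=0$ together with nondegeneracy of the partial Malliavin covariance of the state, via \eqref{def:eq_first_order_and_malliavin} and Assumption \ref{assump:ue}. With your neighbor, blocks $1,\dots,m$ of the state $\mbX$ are themselves $\mcG$-measurable, hence $\bm{D}^{\mcH}\mbX_t$ vanishes on those blocks and the partial Malliavin covariance matrix of $\mbX_t$ has rank at most $N$ rather than $N(m+1)$: the integration by parts formulae can then never produce a joint density of the whole vector. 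The paper avoids this by taking the neighbor to be the \emph{next disjoint window} $\widetilde{Y}=(\tilX^{m+1},\dots,\tilX^{2m+1})$, which is driven by $\widetilde{B}^{m+1},\dots,\widetilde{B}^{2m+1}$ and is therefore genuinely independent of the noise driving $Y=(X^{\theta},\tilX^1,\dots,\tilX^m)$; the within-window interactions are then encoded as dependence of the stacked coefficients on the stacked \emph{state} itself (block $i$ of the coefficient reads block $i+1$ of $Y$), and only the last block reads $\widetilde{Y}$, through its first component. With that single change, the rest of your verification --- the blockwise regularity, the block-diagonal ellipticity, the constraint $[Y]=[\widetilde{Y}]$ from stationarity of the chain, and your (correct, and worth keeping) remark that the smoothness threshold must be read in the inflated dimension, $k\ge N(m+1)+2$ --- goes through and recovers the paper's proof.
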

\begin{proof}
    We consider the process evolving in space $\RR^{(m+1)N}$ defined by 
    $$Y_{\cdot}:=(\widetilde{X}^{0}_{\cdot}, \widetilde{X}^1_{\cdot}, \widetilde{X}^2_{\cdot},\dots, \widetilde{X}^m_{\cdot})$$
     and the neighborhood process $\widetilde{Y}_{\cdot}:=(\widetilde{X}^{m+1}_{\cdot}, \widetilde{X}^{m+2}_{\cdot},\dots, \widetilde{X}^{2m+1}_{\cdot})$. Now $(Y_\cdot, \widetilde{Y}_{\cdot})$ satisfies the directed chain structure and it can be proved that this new directed chain SDE structure $Y_\cdot$ also satisfies Assumption \ref{assump:ue}. Hence the existence and continuity follow from Theorem \ref{thm:smooth_density} and Corollary \ref{cor:smooth_density_with_general_initial}. In particular, if $m=1$, the coupled process $Y_{\cdot}$ is defined by
    $$Y_{t} = Y_{0} + \sum_{i=1}^{2d}\int_0^T V^y_i(s, Y_{s}, \Law(Y_{s}), \widetilde{Y}_{s}) \ud B_{s}^{y, i} ,$$
where the diffusion coefficients $V^y_i$, $i = 1, \ldots , 2d$ are given by  
\begin{align*}
     V^y_i := \left\{ \begin{array}{cc}
        \big(V_i(s, X_{s}, \Law(X_{s}), \widetilde{X}^1_{s}), \mathbf{0}\big)^T \in \RR^{2N},  & i=1,\dots d, \\
        \big(\mathbf{0}, V_{i-d}(s, \tilX^1_{s}, \Law(\tilX^1_{s}), \tilX^{2}_{s})\big)^T\in \RR^{2N},  & i=d+1,\dots 2d,
     \end{array}  \right.
\end{align*}
$B^y$ is independent standard Brownian motions in $\RR^{2d}$ and $\mathbf{0} \in \RR^N$ is a zero vector. 
\end{proof}

\subsection{Markov Random Fields}
The existence of density in Theorem \ref{thm:smooth_density} is closely related to the local Markov property (or Markov random fields) of the directed chain structure. Here, we shall elaborate the relation briefly. A similar topic has been studied by \cite{lacker2021locally} on the undirected graph with locally interacting only on the drift terms. Their approach is to apply a change of measure under which the diffusion coefficients at one vertex of the undirected graph do not depend on the diffusions at the other vertexes of the graph, in order to get the factorization of the probability measure. Usually, this Markov property is only discussed for the undirected graph or directed acyclic graph. The finite particle system that approximates the directed chain structure discussed in \cite{DETERING20202519} admits a loop structure in the finite graph. More precisely, the finite system of $n$ particles $(X_{1,\cdot}^{(n)}, \ldots , X_{n,\cdot}^{(n)})$ is constructed in a loop of size $n$ so that $X_{1,\cdot}^{(n)}$ depends on $X_{2,\cdot}^{(n)}$, $X_{2,\cdot}^{(n)}$ depends on $X_{3,\cdot}^{(n)}$, $\dots$, $X_{n-1,\cdot}^{(n)}$ depends on $X_{n,\cdot}^{(n)}$ and $X_{n, \cdot}^{(n)}$ depends on $X_{1,\cdot}$.
However, when the size $n$ of this loop is forced to be infinity, i.e., $n\to \infty$, we can then treat the dependence of the system on any finite subgraph as the system on an acyclic graph \cite[Section 3]{DETERING20202519}, as \eqref{def:sde_X} in our paper. An illustration is given in Figure \ref{fig:directedchain}.
\begin{figure}
    \centering
    \includegraphics[width=0.8\textwidth]{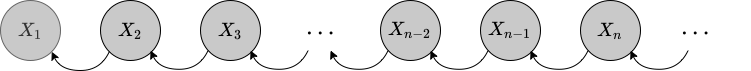}
    \caption{This figure shows a finite cut of the infinite directed chain, i.e., $X_k$ is affected by $X_{k+1}$.}
    \label{fig:directedchain}
\end{figure}

\begin{prop}\label{prop:localMarkov}
The directed chain SDEs described in \eqref{def:sde_X} form a first order Markov random fields, or we say it has the local Markov property.
\end{prop}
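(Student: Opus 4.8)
The plan is to show that the family of trajectories $(X_{1,\cdot}, X_{2,\cdot}, \ldots)$, regarded as random variables attached to the vertices of the chain graph $G$ with edge set $\{\{i,i+1\}:i\ge 1\}$, is a Markov chain in the index $i$, and then to read off the local (first-order) Markov property from this. The essential content is a conditional-independence statement about the trajectories, which I would extract from a functional representation of each $X_{i,\cdot}$ in terms of its single neighbour $X_{i+1,\cdot}$ and an independent driving noise; the regularity and density results of the previous subsections enter only to ensure that the conditional laws at play are regular.

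First I would record the functional representation. Freezing the trajectory $X_{i+1,\cdot}$, the equation for $X_{i,\cdot}$ in \eqref{def:sde_X} becomes an ordinary (non-chain) SDE driven by $B_{i,\cdot}$, whose drift $b(t,X_{i,t},F_{i,t})$ depends only on $X_{i,t}$, on the deterministic marginal law $\mu_{i,t}$, and on the frozen value $X_{i+1,t}$. Since $\mu_{i,t}$ is non-random, this is exactly of the type \eqref{def:sde_random_coef_x} and is strongly solvable with pathwise uniqueness, as discussed in Section \ref{sec:flow}. Hence there is a measurable map $\Psi_i$ with $X_{i,\cdot}=\Psi_i(X_{i+1,\cdot},B_{i,\cdot})$, and by the standing independence assumptions $B_{i,\cdot}$ is independent of $\{B_{j,\cdot}:j\neq i\}$ and of $\sigma(X_{j,\cdot}:j\ge i+1)$.

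Iterating this representation, $(X_{1,\cdot},\ldots,X_{k,\cdot})$ is a measurable function of $(X_{k+1,\cdot},B_{1,\cdot},\ldots,B_{k,\cdot})$. Conditioning on $X_{k+1,\cdot}$, the lower block retains only the noises $B_{1,\cdot},\ldots,B_{k,\cdot}$, which are independent both of $X_{k+1,\cdot}$ and of the noises $B_{k+1,\cdot},B_{k+2,\cdot},\ldots$ that, together with $X_{k+1,\cdot}$, generate the remaining trajectories. Consequently, for every $k\ge 1$, the blocks $\sigma(X_{1,\cdot},\ldots,X_{k,\cdot})$ and $\sigma(X_{k+2,\cdot},X_{k+3,\cdot},\ldots)$ are conditionally independent given $X_{k+1,\cdot}$; this is precisely the (one-sided) Markov property of the trajectory chain at the cut $k+1$.

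Finally I would upgrade this one-sided property to the two-sided local Markov property. Since a Markov chain stays Markov under reversal of the index, combining the cut at $i$ with its reversed counterpart yields, for each interior vertex $i\ge 2$, that $X_{i,\cdot}$ is conditionally independent of $\{X_{j,\cdot}:|j-i|\ge 2\}$ given $(X_{i-1,\cdot},X_{i+1,\cdot})$, while the boundary vertex $i=1$ is handled by the single cut at $1$. Because the coupling is between graph-adjacent vertices only, the field is of first order, and on any finite block the law factors over the edge-cliques as $\prod_i \psi_i(X_{i,\cdot},X_{i+1,\cdot})$, the Gibbs form characteristic of a nearest-neighbour Markov random field. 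I expect the main obstacle to be exactly this passage to the two-sided statement for a semi-infinite, path-valued chain: one must check that conditioning on the downstream neighbour $X_{i-1,\cdot}$ supplies no information about $X_{i,\cdot}$ beyond that already carried by $X_{i+1,\cdot}$, and control the behaviour as the index tends to infinity. The cleanest route is to establish the conditional independence on every finite block $(X_{1,\cdot},\ldots,X_{m,\cdot})$, where the reversed Markov property is elementary, and then let $m\to\infty$.
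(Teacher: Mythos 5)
Your argument operates at the level of whole trajectories: what it aims to establish is that $\sigma(X_{1,\cdot},\dots,X_{k,\cdot})$ and $\sigma(X_{k+2,\cdot},X_{k+3,\cdot},\dots)$ are conditionally independent given the entire path $X_{k+1,\cdot}$. That is not the statement the paper proves. The paper's proof makes explicit that the Markov random field property is asserted for the fixed-time cuts $(X_{1,t},X_{2,t},\dots,X_{m,t})$, for every $m\in\N$ and every $t\in[0,T]$: the kernels in the recursive factorization are the conditional densities of $X_{k+1,t}$ given $X_{k,t}$, whose existence and continuity are supplied by Theorem \ref{thm:smooth_density} and Corollaries \ref{cor:smooth_density_with_general_initial} and \ref{cor:coupled_density}, and the local Markov property is then read off from \cite[Theorem 3.27]{lauritzen1996graphical}. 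Conditional independence given $\sigma(X_{i+1,\cdot})$ does not imply conditional independence given the single random variable $X_{i+1,t}$: coarsening the conditioning $\sigma$-field destroys conditional independence in general. Concretely, $X_{1,t}$ is a functional of the history $(X_{2,s})_{s\le t}$ and of $B_{1,\cdot}$, and that history carries information about $(X_{3,s})_{s\le t}$, hence about $X_{3,t}$, which the single value $X_{2,t}$ does not retain; this filtering effect is exactly what the paper's density-based kernels are designed to handle. This mismatch also explains why your proposal never genuinely uses the smoothness results: in the paper they are not a technical convenience ensuring ``regular conditional laws'', they are the substance of the kernel functions $k^\nu$ in the factorization, and the proposition is placed in the paper precisely as an application of them.

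There is also a flaw inside your path-level argument itself. To obtain the conditional independence you assert that the noises $B_{k+1,\cdot},B_{k+2,\cdot},\dots$ ``together with $X_{k+1,\cdot}$ generate the remaining trajectories''. In the directed chain the generation goes the other way: $X_{k+1,\cdot}=\Psi_{k+1}(X_{k+2,\cdot},B_{k+1,\cdot},X_{k+1,0})$, so the upstream block $(X_{k+2,\cdot},X_{k+3,\cdot},\dots)$ determines $X_{k+1,\cdot}$ together with the noise, and it cannot be recovered from $X_{k+1,\cdot}$ and the upstream noises. The correct route is to condition on the full upstream field $\mc{G}_{k+1}:=\sigma(X_{j,\cdot}: j\ge k+1)$, use joint independence of $(B_{1,\cdot},\dots,B_{k,\cdot},X_{1,0},\dots,X_{k,0})$ from $\mc{G}_{k+1}$ to show that $\EE\big[f(X_{1,\cdot},\dots,X_{k,\cdot})\,\big|\,\mc{G}_{k+1}\big]$ is a function of $X_{k+1,\cdot}$ alone, and then tower down to conditioning on $X_{k+1,\cdot}$. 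Note that this joint independence is genuinely stronger than the pairwise assumptions stated below \eqref{def:sde_X} (only $X_{i+1,\cdot}\perp B_{i,\cdot}$ is assumed there), so it must be extracted from the construction of the infinite chain, e.g.\ the recursive construction used in the paper's own proof. With that repair, your Markov chain argument and the reversal step do yield the path-level local Markov property, but this still proves a statement that is neither implied by nor implies the fixed-time factorization that constitutes Proposition \ref{prop:localMarkov} as the paper proves it.
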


We follow the notations and terminology in \cite{lauritzen1996graphical}. Given a directed graph $G = (V, E)$ with vertexes $V$ and edges $E$, for a vertex $\nu\in V$, let $\mathcal{X}_\nu$ denote the generic space of vertex $\nu$ and $\pa(\nu)\in V$ denote all of its parents. In the infinite directed chain case, $\pa(X_{k,\cdot})=X_{k+1,\cdot}$.

\begin{defn}[Recursive Factorization]
Given a directed graph $G = (V, E)$, we say the probability distribution $P^G$ admits a recursive factorization according to $G$, if there exists non-negative functions, henceforth referred to as kernels, $k^{\nu}(\cdot, \cdot), \nu\in V$ defined on $\mathcal{X}_\nu \times \mathcal{X}_{\pa(\nu)}$, such that
$$\int k^{\nu}(y_\nu, x_{\pa(\nu)})\mu_{\nu}(\ud y_{\nu}) = 1$$   
and $P^G$ has density $f^G$ with respect to a product measure $\mu$, which is defined on the product space $\prod_{\nu \in V} \mathcal{X}_\nu$ by $\mu_\nu$ a measure defined on each  $\mathcal{X}_\nu$, where 
$$f^G(x) = \prod_{\nu\in V}k^{\nu}(x_\nu, x_{\pa(\nu)}).$$
\end{defn}

\begin{proof}[Proof of Proposition \ref{prop:localMarkov}]
Thanks to the special chain-like structure, it can be shown that the distribution of the chain satisfies the \textit{recursive factorization} property, where the existence and continuity of the kernel functions are given by Theorem~\ref{thm:smooth_density} and Corollary \ref{cor:smooth_density_with_general_initial}. For it, on a filtered probability space, let us consider a system of the directed chain diffusion $X_{i,t} $, $i \in \mathbb N$, $t \ge 0$ on the infinitely graph with the vertexes $\mathbb N = \{1, 2, \ldots \}$. Firstly, the coupled diffusion $(X_{1,\cdot}, X_{2,\cdot})\equiv (X^\theta_{\cdot}, \widetilde{X}_{\cdot}) $ satisfy the directed chain stochastic equation and have a continuous density by Corollary \ref{cor:coupled_density} and we denote this joint density by $g(\cdot, \cdot):\RR^N\times \RR^N \to \RR$. We then build the chain recursively by the following rule: given $X_{k, \cdot}$, initial state $X_{k+1, 0}$ and Brownian motion $B_{k+1, \cdot}$ independent of $(X_{1,\cdot}, \dots, X_{k,\cdot}, X_{k+1, 0})$, we construct $X_{k+1, \cdot}$ according to the distribution of $(X^\theta_{\cdot}, \widetilde{X}_{\cdot})$. 

Defining the kernel functions in the following way
$$k^\nu(x_\nu, x_{\pa(\nu)}) :=\left\{ \begin{array}{cc}
    g(x_\nu, x_{\pa(\nu)}), & \text{if } \nu = X_{1}, \\ 
    ({g(x_{\nu})})^{-1}{g(x_\nu, x_{\pa(\nu)})}, & \text{if } \nu = X_{k}, \,k\ge 2,
\end{array} \right.$$
i.e., the conditional density of $X_{k+1, t}$ given $X_{k, t}$ for $k\ge 2$, proves the \textit{recursive factorization} property of the chain on any finite cut $(X_{1,t}, X_{2,t},\dots, X_{m,t}), \, \forall m\in\N$ of the infinite chain for any $t\in[0,T]$, as well as the local Markov property following from \cite[Theorem 3.27]{lauritzen1996graphical}, which is also called the first order Markov random field in the context of \cite{lacker2021locally}. This result can also be verified by a  filtering problem build upon this directed chains structure that we omit due to the page limitation.
\end{proof}

\subsection{Relation to PDE}
We have constructed the integration by parts formulae to argue that the density of directed chain SDEs is smooth in section \ref{sec: IBP}, which is also the tool for constructing solutions to a related PDE problem. To ease notation, we will omit the time dependences in coefficients of SDEs through this section, i.e. we will write $V(X_t^{x, [\theta]}, [X_t^\theta], \tilX_t):=V(t, X_t^{x, [\theta]}, [X_t^\theta], \tilX_t)$. In particular, we are interested in the function 
$U(t, x, [\theta]) = \EE[g(X_t^{x, [\theta]}, [X_t^\theta])]$, $t \in [0, T]$, $x \in \mathbb R^d$ for some sufficiently function $g$. Here $X_\cdot^\theta$ is the solution of \eqref{def:sde_random_coef}-\eqref{def:sde_random_coef_constrain} with random initial $\theta$ and $X_t^{x, [\theta]}$ is the solution to \eqref{def:sde_random_coef_x} with deterministic initial $x$. They depend on a neighborhood process $\tilX_\cdot$ with an independent initial random vector $\tilde{\theta}$. Recall the flow property ~\eqref{def:flow} in section \ref{sec:flow}. It follows that for every $0 \le t \le t + h \le T$, $x \in \mathbb R^d$, 
$$U(t+h, x, [\theta]) = \mathbb E [ g(X_{t+h}^{x, [\theta]}, [X^\theta_{t+h}])] =  \EE\big[ U(t, X_h^{x, [\theta]}, [X_h^\theta]) \big].$$
Hence 
\begin{align}
\label{def:diff_U}
    U(t+h, x, [\theta]) - U(t, x, [\theta]) &= U(t, x, [X_h^\theta])-U(t, x, [\theta])  + \EE\big[ U(t, X_h^{x, [\theta]}, [X_h^\theta]) -  U(t, x, [X_h^\theta])\big] \nonumber \\
    &= I - \EE[J],
\end{align}
where we define $I = U(t, x, [X_h^\theta])-U(t, x, [\theta])$ and $J = U(t, X_h^{x, [\theta]}, [X_h^\theta]) -  U(t, x, [X_h^\theta])$. Applying the chain rule introduced in \cite{chassagneux2014probabilistic} to $I$ and Ito's formula to $J$, we have
\begin{align*}
I &= \int_0^h \EE \bigg[ \sum_{i=1}^N V_0^i(X_r^\theta, [X_r^\theta], \tilX_r)\partial_\mu U(t, x, [X_r^\theta], X_r^\theta)_i  \\
&\hspace{4em}+ \frac{1}{2}\sum_{i,j=1}^N [\sigma\sigma^\top(X_r^\theta, [X_r^\theta], \tilX_r)]_{i,j}\partial_{v_j}\partial_{\mu}U(t, x, [X_r^\theta], X_r^\theta)_i\bigg] \ud r, \\
J &= \int_0^h \sum_{i=1}^N V_0^i(X_r^{x, [\theta]}, [X^\theta_r], \tilX_r)\partial_{x_i}U(t, X_r^{x, [\theta]}, [X_h^\theta])\ud r \\
& \hspace{2em} + \frac{1}{2}\int_0^h \sum_{i,j=1}^N [\sigma \sigma^\top(X_r^{x, [\theta]}, [X^\theta_r], \tilX_r)]_{i,j} \partial_{x_i}\partial_{x_j} U(t, X_r^{x, [\theta]}, [X_h^\theta])  \ud r \\
&\hspace{2em} + \int_0^h \sum_{j=1}^d\sum_{i=1}^N V_j^i(X_r^{x, [\theta]}, [X^\theta_r], \tilX_r)\partial_{x_i}U(t, X_r^{x, [\theta]}, [X_h^\theta])\ud B_r^j.
\end{align*}

For the meaning of the differential operator with respect to measure $\partial_\mu$ appeared in $I$, we refer to section \ref{sec:Notations and Basic Setup}.
Then let us plug $I, J$ into \eqref{def:diff_U} and take expectation, divide by $h$ on both sides, and send $h$ to $0$, we will end up with a PDE of the form given below
\begin{equation}
    \begin{aligned}
    \label{eq:pde}
   (\partial_t - \mc{L})U(t, x, [\theta])=0 \quad & \text{ for } (t, x, [\theta])\in (0, T]\times \RR^N \times\mc{P}_2(\RR^N),\\ 
   U(0, x, [\theta]) = g(x, [\theta])\quad & \text{ for } (x, [\theta])\in \RR^N \times\mc{P}_2(\RR^N),
\end{aligned}
\end{equation}
where $g: \RR^N \times \mc{P}_2(\RR^N) \to \RR$ and the operator $\mc{L}$ acts on smooth enough functions $F:\RR^N \times \mc{P}_2(\RR^N) \to \RR^N$ defined by
\begin{align}
\label{eq:operator_L}
    \mc{L} F(x, [\theta]) =&\EE\bigg[ \sum_{i=1}^N V_0^i(x, [\theta], \tilde{\theta})\partial_{x_i}F(x, [\theta]) + \frac{1}{2} \sum_{i,j=1}^N [\sigma\sigma^\top(x, [\theta], \tilde{\theta})]_{i,j}\partial_{x_i}\partial_{x_j}F(x, [\theta])\bigg] \nonumber\\
    &\hspace{-4em}+ \EE\bigg[ \sum_{i=1}^N V_0^i(\theta, [\theta], \tilde{\theta})\partial_\mu F(x, [\theta], \theta)_i  + \frac{1}{2}\sum_{i,j=1}^N [\sigma\sigma^\top(\theta, [\theta], \tilde{\theta})]_{i,j}\partial_{v_j}\partial_{\mu}F(x, [\theta], \theta)_i\bigg].
\end{align}
The expectation in the first line of \eqref{eq:operator_L} is taken with respect to the random variable $\tilde{\theta}$ due to the appearance of the neighborhood process in the difference $J$, while the the expectation in the second line is taken with respect to the joint distribution of $\theta, \tilde{\theta} $, as an application of the chain rule introduced in \cite{chassagneux2014probabilistic}  to the difference $I$.

It is evidently that a proper condition for the initial $g$ is needed for the existence of the solution to PDE \eqref{eq:pde}. Such a directed chain type SDE has not been considered before, the closest work is related to the PDE associated with the  McKean-Vlasov type SDE. In \cite{rainer2017}, $g$ is assumed to have bounded second order derivatives. The smoothness on $g$ is relaxed in \cite{crisan2018smoothing}. In particular, they assume $g$ belongs to a class of functions that can be approximated by a sequence of functions with polynomial growth, and also satisfy certain growth condition on its derivatives. Hence, they claim that $g$ is not necessarily differentiable. We shall emphasize that detailed discussion on the choice of assumptions in $g$ is beyond the scope of this paper, but we conjecture that some similar results should also hold for our case and will include this in our future research. 

\bibliographystyle{acm}
\bibliography{citations}

\begin{thebibliography}{10}

\bibitem{bayraktar2020graphon}
{\sc Bayraktar, E., Chakraborty, S., and Wu, R.}
\newblock Graphon mean field systems.
\newblock {\em arXiv preprint arXiv:2003.13180\/} (2020).

\bibitem{bayraktar2021graphon}
{\sc Bayraktar, E., and Wu, R.}
\newblock Graphon particle system: Uniform-in-time concentration bounds.
\newblock {\em arXiv preprint arXiv:2105.11040\/} (2021).

\bibitem{rainer2017}
{\sc Buckdahn, R., Li, J., Peng, S., and Rainer, C.}
\newblock {Mean-field stochastic differential equations and associated PDEs}.
\newblock {\em The Annals of Probability 45}, 2 (2017), 824 -- 878.

\bibitem{caines2018graphon}
{\sc Caines, P.~E., and Huang, M.}
\newblock Graphon mean field games and the {GMFG} equations.
\newblock In {\em 2018 IEEE Conference on Decision and Control (CDC)\/} (2018),
  pp.~4129--4134.

\bibitem{caines2019graphon}
{\sc Caines, P.~E., and Huang, M.}
\newblock Graphon mean field games and the {GMFG} equations: ε-nash
  equilibria.
\newblock In {\em 2019 IEEE 58th Conference on Decision and Control (CDC)\/}
  (2019), pp.~286--292.

\bibitem{cardaliaguet2012notes}
{\sc Cardaliaguet, P.}
\newblock {\em Notes on Mean Field Games. From P.-L.Lions lectures at College
  de France (2010).}
\newblock Online Notes Available at, 2010.

\bibitem{carmona2015}
{\sc Carmona, R., and Delarue, F.}
\newblock {Forward-backward stochastic differential equations and controlled
  {M}cKean-{V}lasov dynamics}.
\newblock {\em The Annals of Probability 43}, 5 (2015), 2647 -- 2700.

\bibitem{CASS20091416}
{\sc Cass, T.}
\newblock Smooth densities for solutions to stochastic differential equations
  with jumps.
\newblock {\em Stochastic Processes and their Applications 119}, 5 (2009),
  1416--1435.

\bibitem{chassagneux2014probabilistic}
{\sc Chassagneux, J.-F., Crisan, D., and Delarue, F.}
\newblock A probabilistic approach to classical solutions of the master
  equation for large population equilibria.
\newblock {\em arXiv preprint arXiv:1411.3009\/} (2014).

\bibitem{crisan2018smoothing}
{\sc Crisan, D., and McMurray, E.}
\newblock Smoothing properties of {M}c{K}ean--{V}lasov sdes.
\newblock {\em Probability Theory and Related Fields 171}, 1 (2018), 97--148.

\bibitem{DETERING20202519}
{\sc Detering, N., Fouque, J.-P., and Ichiba, T.}
\newblock Directed chain stochastic differential equations.
\newblock {\em Stochastic Processes and their Applications 130}, 4 (2020),
  2519--2551.

\bibitem{yichen1}
{\sc Feng, Y., Fouque, J.-P., and Ichiba, T.}
\newblock Linear-quadratic stochastic differential games on directed chain
  networks.
\newblock {\em Journal of mathematics and statistical science 7}, 2 (2021).

\bibitem{yichen2}
{\sc Feng, Y., Fouque, J.-P., and Ichiba, T.}
\newblock Linear-quadratic stochastic differential games on random directed
  networks.
\newblock {\em Journal of mathematics and statistical science 7}, 3 (2021).

\bibitem{ikeda1985malliavin}
{\sc Ikeda, N., Shigekawa, I., and Taniguchi, S.}
\newblock The {M}alliavin calculus and long time asymptotics of certain
  {W}iener integrals.
\newblock In {\em Miniconference on Linear Analysis and Functional Spaces}.
  Australian National University, Mathematical Sciences Institute, 1985,
  pp.~46--113.

\bibitem{kunita1984stochastic}
{\sc Kunita, H.}
\newblock Stochastic differential equations and stochastic flows of
  diffeomorphisms.
\newblock In {\em Ecole d'{\'e}t{\'e} de probabilit{\'e}s de Saint-Flour
  XII-1982}. Springer, 1984, pp.~143--303.

\bibitem{Kusuoka1984ApplicationsOT}
{\sc Kusuoka, S.}
\newblock Applications of the {M}alliavin calculus, part iii.
\newblock {\em Journal of the Faculty of Science. University of Tokyo. Section
  IA. Mathematics 32\/} (1984), 271--306.

\bibitem{kusuoka20032}
{\sc Kusuoka, S.}
\newblock {M}alliavin calculus revisited.
\newblock {\em Journal of Mathematical Sciences-University of Tokyo 10}, 2
  (2003), 261--278.

\bibitem{KUSUOKA1984271}
{\sc Kusuoka, S., and Stroock, D.}
\newblock Applications of the {M}alliavin calculus, part i.
\newblock In {\em Stochastic Analysis}, K.~It\^o, Ed., vol.~32 of {\em
  North-Holland Mathematical Library}. Elsevier, 1984, pp.~271--306.

\bibitem{Kusuoka1984partial}
{\sc Kusuoka, S., and Stroock, D.}
\newblock The partial {M}alliavin calculus and its application to non-linear
  filtering.
\newblock {\em Stochastics 12}, 2 (1984), 83--142.

\bibitem{kusuoka1987applications}
{\sc Kusuoka, S., and Stroock, D.}
\newblock Applications of the {M}alliavin calculus, part iii.
\newblock {\em J. Fac. Sci. Univ. Tokyo Sect IA Math 34\/} (1987), 391--442.

\bibitem{Kusuoka1985ApplicationsOT}
{\sc Kusuoka, S., and Stroock, D.~W.}
\newblock Applications of the {M}alliavin calculus. ii.
\newblock {\em Journal of the Faculty of Science, the University of Tokyo.
  Sect. 1 A, Mathematics 32\/} (1985), 1--76.

\bibitem{lacker2021locally}
{\sc Lacker, D., Ramanan, K., and Wu, R.}
\newblock Locally interacting diffusions as markov random fields on path space.
\newblock {\em Stochastic Processes and their Applications 140\/} (2021),
  81--114.

\bibitem{lacker2020case}
{\sc Lacker, D., and Soret, A.}
\newblock A case study on stochastic games on large graphs in mean field and
  sparse regimes.
\newblock {\em arXiv preprint arXiv:2005.14102\/} (2020).

\bibitem{lauritzen1996graphical}
{\sc Lauritzen, S.}
\newblock {\em Graphical Models}.
\newblock Oxford Statistical Science Series. Clarendon Press, 1996.

\bibitem{malliavin1982certaines}
{\sc Malliavin, P., et~al.}
\newblock Sur certaines int{\'e}grales stochastiques oscillantes.
\newblock {\em C.R. SEANCES ACAD. SCI., 295, 295-300\/} (1982).

\bibitem{nualart2009malliavin}
{\sc Nualart, D.}
\newblock {\em {M}alliavin calculus and its applications}.
\newblock No.~110 in CBMS Regional Conference Series in Mathematics. American
  Mathematical Soc., 2009.

\bibitem{nualart1989partial}
{\sc Nualart, D., and Zakai, M.}
\newblock The partial {M}alliavin calculus.
\newblock In {\em S{\'e}minaire de Probabilit{\'e}s XXIII}. Springer, 1989,
  pp.~362--381.

\bibitem{taniguchi1985}
{\sc Taniguchi, S.}
\newblock {Applications of {M}alliavin's calculus to time-dependent systems of
  heat equations}.
\newblock {\em Osaka Journal of Mathematics 22}, 2 (1985), 307 -- 320.

\end{thebibliography}


\end{document}